\newtheorem{thm}{}[section]
\newtheorem{theorem}[thm]{Theorem}
\newtheorem{lemma}[thm]{Lemma}
\newtheorem{proposition}[thm]{Proposition}
\theoremstyle{definition}
\newtheorem{definition}[thm]{Definition}
\theoremstyle{remark}
\newtheorem{remark}[thm]{Remark}
\newtheorem{example}[thm]{Example}
\numberwithin{equation}{section}
\newcommand{\LPZC}{\ensuremath{\mathscr{S}}}
\newcommand{\BCLTC}{\ensuremath{\mathscr{Z}}}
\newcommand{\UCB}{\ensuremath{\mathscr{U}_S}}
\newcommand{\UTAP}{\ensuremath{\mathscr{U}}}
\newcommand{\nn}{\ensuremath{{\bm{n}}}}
\newcommand{\ls}{\ensuremath{d^{\,0}}}
\newcommand{\VV}{\ensuremath{\mathbb{V}}}
\newcommand{\Jt}{\ensuremath{\mathcal{J}}}
\newcommand{\Nt}{\ensuremath{\mathcal{N}}}
\newcommand{\Mt}{\ensuremath{\mathcal{M}}}
\newcommand{\Ts}{\ensuremath{\mathcal{T}}}
\newcommand{\TT}{\ensuremath{\mathbb{T}}}
\newcommand{\UB}{\ensuremath{\mathcal{U}}}
\newcommand{\XB}{\ensuremath{\mathcal{X}}}
\newcommand{\YB}{\ensuremath{\mathcal{Y}}}
\newcommand{\XX}{\ensuremath{X}}
\newcommand{\YY}{\ensuremath{Y}}
\newcommand{\xx}{\ensuremath{\bm{x}}}
\newcommand{\xs}{\ensuremath{\overline{\bm{x}}}}
\newcommand{\HB}{\ensuremath{\mathcal{H}}}
\newcommand{\Id}{\ensuremath{\mathrm{Id}}}
\newcommand{\Ind}{\ensuremath{\mathbbm{1}}}
\newcommand{\NN}{\ensuremath{\mathbb{N}}}
\newcommand{\FF}{\ensuremath{\mathbb{F}}}
\newcommand{\Fou}{\ensuremath{\mathcal{F}}}
\newcommand{\yy}{\ensuremath{\bm{y}}}
\newcommand{\ee}{\ensuremath{\bm{e}}}
\newcommand{\EE}{\ensuremath{\mathcal{E}}}
\newcommand{\uu}{\ensuremath{\bm{u}}}
\newcommand{\vv}{\ensuremath{\bm{v}}}
\newcommand{\zz}{\ensuremath{\bm{z}}}
\newcommand{\supp}{\operatorname{supp}}
\newcommand{\co}{\operatorname{co}}
\newcommand{\LL}{{\ensuremath{\mathcal{L}}}}
\newcommand{\ww}{\ensuremath{\bm{w}}}
\newcommand\subsetsim{\mathrel{%
\ooalign{\raise0.2ex\hbox{$\subset$}\cr\hidewidth\raise-0.8ex\hbox{\scalebox{0.9}{$\sim$}}\hidewidth\cr}}}
\def\MR#1{}
\begin{document}

\title[]{Uniqueness of unconditional basis of infinite direct sums of quasi-Banach spaces}
\author[F. Albiac]{F. Albiac}
\address{Department of Mathematics, Statistics and Computer Sciences, and InaMat$^{2}$\\ Universidad P\'ublica de Navarra\\
Pamplona 31006\\ Spain}
\email{fernando.albiac@unavarra.es}

\author[J. L. Ansorena]{J. L. Ansorena}
\address{Department of Mathematics and Computer Sciences\\
Universidad de La Rioja\\
Logro\~no 26004\\ Spain}
\email{joseluis.ansorena@unirioja.es}

\subjclass[2010]{46B15, 46B20, 46B42, 46B45, 46A16, 46A35, 46A40, 46A45}

\keywords{uniqueness, unconditional basis, equivalence of bases, quasi-Banach space, Banach lattice}

\begin{abstract}
This paper is devoted to providing a unifying approach to the study of the uniqueness of unconditional bases, up to equivalence and permutation, of infinite direct sums of quasi-Banach spaces. Our new approach to this type of problem permits to show that a wide class of vector-valued sequence spaces have a unique unconditional basis up to a permutation. In particular, solving a problem from \cite{AlbiacLeranoz2011} we show that if $\XX$ is quasi-Banach space with a strongly absolute unconditional basis then the infinite direct sum $\ell_{1}(\XX)$ has a unique unconditional basis up to a permutation, even without knowing whether $\XX$ has a unique unconditional basis or not. Applications to the uniqueness of unconditional structure of infinite direct sums of non-locally convex Orlicz and Lorentz sequence spaces, among other classical spaces, are also obtained as a by-product of our work.
\end{abstract}

\thanks{F. Albiac acknowledges the support of the Spanish Ministry for Science and Innovation under Grant PID2019-107701GB-I00 for \emph{Operators, lattices, and structure of Banach spaces}. F. Albiac and J.~L. Ansorena acknowledge the support of the Spanish Ministry for Science, Innovation, and Universities under Grant PGC2018-095366-B-I00 for \emph{An\'alisis Vectorial, Multilineal y Aproximaci\'on}.}

\maketitle

\section{Introduction and background}
\noindent

Given a Banach space (or, more generally, a quasi-Banach space) $\XX$ with a normalized unconditional basis $(\xx_{n})_{n=1}^{\infty}$, let us write $\XX\in\UTAP$ if every normalized unconditional basis of $\XX$ is equivalent to a permutation $(\xx_{\pi(n)})_{n=1}^{\infty}$ of the basis $(\xx_{n})_{n=1}^{\infty}$. If we impose a stronger uniqueness property, where it is required that $\pi$ be the identity, we write $\XX\in\UCB$. Notice that $X\in\UCB$ if and only if $X$ has a symmetric basis and $X$ belongs to $\UTAP$. In the context of Banach spaces it is well known that $\XX\in\UCB$ if and only if $\XX$ is isomorphic to one of the spaces from the set $\LPZC=\{c_0,\ell_1,\ell_2\}$ (\cites{LinPel1968,LinZip1969}). However, for quasi-Banach spaces which are not Banach spaces the situation is quite different since there is a wide class of non-locally convex Orlicz sequence spaces, including the spaces $\ell_p$ for $0<p<1$, which belong to $\UCB$
(\cite{Kalton1977}).

Bourgain et al.\ studied in \cite{BCLT1985} the class $\BCLTC$ of those Banach spaces which can be obtained by taking the infinite direct sum of a space from $\LPZC$ in the sense of a space also in $\LPZC$, and gave a complete description of the class $\BCLTC\cap \UTAP$ by proving that the spaces $c_{0}(\ell_{1})$, $\ell_{1}(c_{0})$, $c_{0}(\ell_{2})$ and $\ell_{1}(\ell_{2})$ belong to $\UTAP$, while $\ell_{2}(\ell_{1})$ and $\ell_{2}(c_{0})$ do not. Many of the questions the authors formulated in their 1985 \emph{Memoir} remain open as of today. They conjectured that if a Banach space $\XX$ belongs to $\UTAP$ then so does the iterated copy of $\XX$ in the sense of one of the spaces from $\LPZC$. This conjecture was disproved in the general case in 1999 by Casazza and Kalton, who showed that Tsirelson's space $\Ts\in \UTAP$ whereas $c_{0}(\Ts)\notin\UTAP$ (\cite{CasKal1999}). Casazza and Kalton's work gave thus continuity to a research topic that was central in Banach space theory in the 1960's and 1970's, but that was interrupted after the \emph{Memoir}. Perhaps the researchers felt discouraged to put effort into a subject that required the discovery of novel tools in order to make headway, with little hope for attaining a satisfactory classification of the Banach spaces belonging to $\UTAP$.

At the same time, the positive results on uniqueness of unconditional basis obtained in the context of non-locally convex quasi-Banach spaces motivated further study with a number of authors contributing to the development of a coherent theory. An important advance was the paper \cite{KLW1990} by Kalton et al.\, followed by the work of Ler\'anoz \cite{Leranoz1992}, who proved that $c_{0}(\ell_{p})\in \UTAP$ for all $0<p<1$, and Wojtaszczyk \cite{Woj1997}, who proved that the Hardy space $H_{p}(\TT)$ also belongs to the class $\UTAP$ for $0<p<1$. Subsequently, it was proved that $\ell_{p}(\ell_{2})$, $\ell_{p}(\ell_{1})$, and $\ell_{1}(\ell_{p})$ also belong to $\UTAP$ for all $0<p<1$ (\cites{AlbiacLeranoz2002, AKL2004}), and the question arose of what can be said about infinite direct sums of other quasi-Banach spaces. Our aim in this paper is to fill this gap in the literature. To that end, we develop a new set of techniques which combined with reinterpretations of the already existing methods permit to obtain a myriad of new additions to the list of spaces with a unique unconditional basis.

The article is structured in five more sections. Section~\ref{sect:term} gathers the terminology and the notation that are more heavily used.
 Section~\ref{sect:preliminary} is preparatory but becomes instrumental in what follows. We survey the techniques developed by the specialists in their study of the uniqueness of unconditional structure which will be of interest for us, and give them a quantitative twist. In particular we further the study of strongly absolute bases. Section~\ref{sect:SASense} addresses the uniqueness of uconditional basis of infinite direct sums of quasi-Banach spaces in the sense of an atomic quasi-Banach lattice whose unit vector system is strongly absolute, while in Section~\ref{sect:l1Sense} we concentrate in $\ell_1$-sums of quasi-Banach spaces with strongly absolute bases. A brief digression could help the reader to understand better our approach in these theoretical sections. An infinite direct sum $\XX=(\bigoplus_{j=1}^{\infty} \XX_{j})_{\LL}$ of quasi-Banach spaces $(\XX_{j})_{j=1}^{\infty}$ in the sense of some quasi-Banach lattice $\LL$ may be regarded as an infinite matrix whose $j$th row is occupied by the vectors in $X_{j}$. Since the spaces $\XX_{j}$ come with a basis $\XB_{j}$, the vectors in $\XX_{j}$ are sequences of scalars (relative to the basis $\XB_{j}$). Understanding the geometry of $\XX$ often requires working simultaneously with several (or even all) rows of $\XX$ and in doing so, we need to count on estimates for the bases $\XB_{j}$). Understanding the geometry of $\XX$ often requires working simultaneously with several (or even all) rows of $\XX$ and in doing so, we need to count on estimates for the bases and spaces and spaces that do not depend on the specific row(s) we are looking at. This compels us to introduce the quantitative versions of the notions we will use and to keep track of the constants involved in our arguments. Finally, Section~\ref{sect:examples} is devoted to applying our theoretical schemes to practical cases. Among the vast amount of novel examples that we can tailor, we exhibit a selection of important new examples of spaces that belong to $\UTAP$ and which involve Lebesgue sequence spaces, Lorentz sequence spaces, Orlicz sequence spaces, Bourgin-Nakano spaces, Hardy spaces, and Tsirelson's space.

\section{Terminology}\label{sect:term}\noindent
We use standard terminology and notation in Banach space theory as can be found, e.g., in \cites{AlbiacKalton2016}. Most of our results, however, will be established in the general setting of quasi-Banach spaces; the unfamiliar reader will find general information about quasi-Banach spaces in \cite{KPR1984}. In keeping with current usage we will write $c_{00}(\Jt)$ for the set of all $(a_j)_{j\in \Jt}\in \FF^{\Jt}$ such that $|\{j\in \Jt \colon a_j\not=0\}|<\infty$, where $\FF$ can be the real or complex scalar field. The convex hull of a subset $Z$ of a vector space will be denoted by $\co(Z)$. A \emph{quasi-norm} on a vector space $\XX$ over $\FF$ is a map $\Vert \cdot\Vert\colon \XX\to[0,\infty)$ satisfying $\Vert x\Vert>0$ when $f\not=0$, $\Vert t\, f\Vert=|t| \, \Vert f\Vert$ for all $t\in\FF$ and all $f\in \XX$, and
\begin{equation}\label{eq:qn}
\Vert f+g\Vert\le \kappa (\Vert f\Vert +\Vert g\Vert), \quad f,\, g\in\XX,
\end{equation}
for some constant $\kappa\ge 1$. The optimal constant such that \eqref{eq:qn} holds will be called the \emph{modulus of concavity} of $\XX$. If $\Vert \cdot\Vert$ verifies
\[
\Vert f+g\Vert^p \le \Vert f\Vert^p+\Vert g\Vert^p, \quad f,\, g\in\XX,
\]
for some $0<p\le 1$, the quasi-norm $\Vert \cdot\Vert$ is said to be a \emph{$p$-norm.} Note that a $p$-norm is a quasi-norm with modulus of concavity at most $2^{1/p-1}$. If $\XX$ is complete with the metric topology induced induced by the quasi-norm, $(\XX,\Vert \cdot\Vert)$ is said to be a \emph{quasi-Banach space}. A \emph{$p$-Banach space} will be a quasi-Banach space equipped with a $p$-norm.
The closed unit ball of a quasi-Banach space $\XX$ will be denoted by $B_\XX$ and
the closed linear span of a subset $Z$ of $\XX$ will be denoted by $[Z]$.

We will frequently index unconditional bases and basic sequences by an unordered countable index set $\Nt$ which need not be the natural numbers $\NN$.
A countable family $\XB=(\xx_n)_{n \in \Nt}$ in $\XX$ is an \emph{unconditional basic sequence} if for every $f\in[\xx_n \colon n \in \Nt]$ there is a unique family $(a_n)_{n \in \Nt}$ in $\FF$ such that the series $\sum_{n \in \Nt} a_n \, \xx_n$ converges unconditionally to $f$.
If $\XB=(\xx_n)_{n \in \Nt}$ is an unconditional basic sequence, there is a constant $K\ge 1$ such that
\[
\left\Vert \sum_{n \in \Nt} a_n \, \xx_n\right\Vert \le K \left\Vert \sum_{n \in \Nt} b_n \, \xx_n\right\Vert
\]
for all finitely non-zero sequence of scalars $(a_n)_{n\in \Nt}$ with $|a_n|\le|b_n|$ for all $n\in\Nt$ (see \cite{AABW2019}*{Theorem 1.10}). If this condition is satisfied some $K\ge 1$ we say that $\XB$ is $K$-unconditional and if, additionally, $[\xx_n \colon n \in \Nt]=\XX$ then $\XB$ is said to be an \emph{unconditional basis} of $\XX$. An unconditional basis $\XB=(\xx_n)_{n \in \Nt}$ in $\XX$ becomes $1$-unconditional under the renorming
\[
\Vert f\Vert_u=\sup\left\{ \left\Vert \sum_{n\in\Nt} a_n \, \xx_n\right\Vert \colon |a_n|\le |\xx_n^*(f)| \right\}, \quad f\in \XX.
\]
Thus, we will in general take the viewpoint that an unconditional basis in a quasi-Banach space $\XX$ confers the structure of an atomic quasi-Banach lattice on $\XX$.

If $\XB=(\xx_n)_{n \in \Nt}$ is an unconditional basis of $\XX$ with biorthogonal functionals $(\xx_n^*)_{n \in \Nt}$, the map $\Fou\colon\XX\to\FF^\Nt$ given by
\[
f=\sum_{n \in \Nt} a_n\, \xx_n \mapsto (\xx_n^*(f))_{n \in \Nt} = (a_n)_{n \in \Nt}
\]
will be called the \emph{coefficient transform} with respect to $\XB$. The \emph{support} of $f\in\XX$ with respect to $\XB$ is the set
\[
\supp(f)=\{n\in\Nt \colon \xx_n^*(f)\not=0\},
\]
and the support of a functional $f^*\in\XX^*$ with respect to $\XB$ is the set
\[
\supp(f^*)=\{n\in\Nt \colon f^*(\xx_n)\not=0\}.
\]

Given a countable set $\Jt$, we write $\EE_{\Jt}:=(\ee_j)_{j\in \Jt}$ for the canonical unit vector system of $\FF^{\Jt}$, i.e., $\ee_j=(\delta_{j,k})_{k\in \Jt}$ for each $j\in \Jt$, where $\delta_{j,k}=1$ if $j=k$ and $\delta_{j,k}=0$ otherwise. A \emph{sequence space} on $\Jt$ will be a quasi-Banach lattice $\LL\subseteq\FF^{\Jt}$ for which the $1$-unconditional basic sequence $\EE_{\Jt}$ is normalized. If $c_{00}$ is dense in $\LL$, so that $\EE_{\Jt}$ is a normalized $1$-unconditional basis of $\LL$, we say that $\LL$ is a \emph{minimal sequence space}. The most important examples of minimal sequence spaces $\LL$ on a set $\Jt$ are the classical Lebesgue sequence spaces $\ell_p(\Jt)$ for $0<p<\infty$, and $c_0(\Jt)$. As is customary, $\ell_p$ will stand for the space $\ell_p(\NN)$ and $\ell_p^s$ will denote $\ell_p(\{n\in\NN \colon n\le s\})$ for $s\in\NN$.

We will refer to a sequence space $\LL$ on $\NN$ as being \emph{subsymmetric} if for each increasing function $\phi\colon\NN\to\NN$, the operator $S_\phi\colon\LL\to\LL$ defined by
\[
(a_n)_{n=1}^\infty\mapsto (b_n)_{n=1}^\infty, \quad \text{where}\;
b_k=\begin{cases} a_n & \text{ if } k=\phi(n),\\ 0 & \text{ otherwise,}\end{cases}
\]
is an isometric embedding. If $S_\phi$ is an isometry for every one-to-one map $\phi$, $\LL$ will be said to be \emph{symmetric}.

Given a sequence space $\LL$ on $\Jt$, and a family $(\XX_j, \Vert\cdot\Vert_{\XX_{j}})_{j\in\Jt}$ of (possibly repeated) quasi-Banach spaces with moduli of concavity uniformly bounded, the space
\[
\left(\bigoplus_{j\in\Jt} \XX_j\right)_\LL=\left\{f=(f_j)_{j\in\Jt}\in\prod_{j\in\Jt} \XX_j\colon \left\Vert (\Vert f_j\Vert_{\XX_{j}})_{n\in\Jt} \right\Vert_\LL<\infty\right\}
\]
is a quasi-Banach space with the quasi-norm
\[
\Vert f\Vert= \left\Vert (\Vert f_j\Vert_{\XX_{j}})_{n\in\Jt}\right\Vert.
\]

Let $(\YY_j)_{j\in\Jt}$ be another collection of (possibly repeated) quasi-Banach spaces. If for each $j\in\Jt$, the map $T_j\colon \XX_j\to\YY_j$ is a bounded linear operator and $M:=\sup_{j\in\Jt} \Vert T_j\Vert<\infty$, then the linear operator
\[
T\colon\left (\bigoplus_{j\in\Jt} \XX_j\right)_\LL\to \left(\bigoplus_{j\in\Jt} \YY_j\right)_\LL,\qquad (f_j)_{j\in\Jt} \mapsto (T_j(f_j))_{j\in\Jt}
\]
is bounded with $\Vert T\Vert\le M$.

The dual space $\LL^*$ of a minimal sequence space on $\Jt$ can be isometrically identified with a sequence space on $\Jt$. Thus, the dual space of $\left(\bigoplus_{j\in\Jt} \XX_j\right)_\LL$ can be isometrically identified with $\left(\bigoplus_{j\in\Jt} \XX_j^*\right)_{\LL^*}$.

For each $k\in\Jt$ let $L_k\colon\XX_k \to (\bigoplus_{j\in\Jt} \XX_j)_\LL$ be the canonical embedding.
If there is a constant $K$ such that, for each $j\in\Jt$, $\XB_j=(\xx_{j,n})_{n\in\Nt_j}$ is a $K$-unconditional basic sequence,
then the sequence
\[
\left(\bigoplus_{j\in\Jt} \XB_j\right)_\LL=\left( L_j(\xx_{j,n})\right)_{n\in\Nt_j,\; j\in\Jt}
\]
is a $K$-unconditional basic sequence of $(\bigoplus_{j\in\Jt} \XX_j)_\LL$. If $\XB_j$ is normalized for all $j\in\Jt$, so is $(\bigoplus_{j\in\Jt} \XB_j)_\LL$. If $\XB_j$ is a basis of $\XX_j$ for all $j\in\Jt$ and $\LL$ is minimal, then $\left(\bigoplus_{j\in\Jt} \XB_j\right)_\LL$ is a basis of $\XX=(\bigoplus_{j\in\Jt} \XX_j)_\LL$ whose dual basis is $\left(\bigoplus_{j\in\Jt} \XB_j^*\right)_{\LL^*}$ via the aforementioned identification between $\XX^*$ and $\left(\bigoplus_{j\in\Jt} \XX_j^*\right)_{\LL^*}$.

If $\Jt$ is finite and $\LL=\ell_\infty(\Jt)$ we set $\bigoplus_{j\in\Jt} \XX_j =(\bigoplus_{j\in\Jt} \XX_j)_\LL$ and $\bigoplus_{j\in\Jt} \XB =(\bigoplus_{j\in\Jt} \XB_j )_\LL$. If $\XX_j=\XX$ for all $j\in\Jt$, we set $\LL(\XX)=(\bigoplus_{j\in\Jt} \XX_j)_\LL$. Similarly, if $\XB_j=\XB$ for all $j\in\Jt$, we set $\LL(\XB) =(\bigoplus_{j\in\Jt} \XX_j)_\LL$. Finally, given $s\in\NN$, we put $\XX^s=\ell_\infty^s(\XX)$ and $\XB^s=\ell_\infty^s(\XB)$.

Suppose that $\XB=(\xx_n)_{n \in \Nt}$ and $\YB=(\yy_n)_{n \in \Nt}$ are families of vectors in quasi-Banach spaces $\XX$ and $\YY$, respectively. Let $C\in(0,\infty)$. We say that $\XB$ $C$-\emph{dominates} $\YB$ if there is a linear map $T$ from $[\XB]$ into $\YY$ with $T(\xx_n)=\yy_n$ for all $n \in \Nt$ and $\Vert T\Vert\le C$. If $T$ is an isomorphic embedding with $\max\{\Vert T\Vert, \Vert T^{-1}\Vert\} \le C\in[1,\infty)$, $\XB$ and $\YB$ are said to be $C$-\emph{equivalent}. We say that $\XB$ is \emph{permutatively $C$-equivalent} to a family $\YB=(\yy_m)_{m\in \Mt}$ in $\YY$, and we write $\XB\sim_C\YB$, if there is a bijection $\pi\colon \Nt\to \Mt$ such that $\XB$ and $(\yy_{\pi(n)})_{n \in \Nt}$ are $C$-equivalent. A \emph{subbasis} of an unconditional basis $(\xx_n)_{n \in \Nt}$ is a family $(\xx_n)_{n\in \Mt}$ for some subset $\Mt$ of $\Nt$.

The symbol $\YB\subsetsim_C \XB$ will mean that the unconditional basic sequence $\YB$ is $C$-equivalent to a permutation of a subbasis of the unconditional basis $\XB$. In all cases, if the precise constants are irrelevant, we simply drop them from the notation.

A sequence $(\xx_{n})_{n\in \Nt}$ in a quasi-Banach space $\XX$ said to be \emph{semi-normalized} if
\[
0<a:=\inf_{n\in\Nt} \Vert \xx_{n}\Vert \le b:=\sup_{n\in\Nt} \Vert \xx_{n}\Vert<\infty.
\]
If $a=b=1$ we say that $(\xx_n)_{n\in\Nt}$ is \emph{normalized}.

Given an unconditional basic sequence $\XB=(\xx_n)_{n\in\Nt}$ and non-zero scalars $(a_n)_{n\in\Nt}$, the rescaled basic sequence $(a_n \, \xx_n)_{n\in\Nt}$ is equivalent to $\XB$ if and only if $(a_n)_{n\in\Nt}$ is semi-normalized. Thus, the properties related to the uniqueness of unconditional bases in quasi-Banach spaces must be stated in terms of normalized (or, equivalently, semi-normalized) basic sequences. We say that a quasi-Banach space $\XX$ has a \emph{unique unconditional basis up to equivalence and permutation} (UTAP unconditional basis for short) if it has a normalized basis $\XB$ and any other normalized basis is permutatively equivalent to $\XB$.

Other more specific terminology will be introduced in context when needed.

\section{Preliminary results}\label{sect:preliminary}
\noindent Our approach to the uniqueness of unconditional basis problem in infinite direct sums of quasi-Banach spaces will rely on an amalgamation of a set of techniques, most of which are specific to the non-locally convex case. In this preparatory section we present the properties and the different methods that will be used in the proofs of our main results in Sections~\ref{sect:SASense} and ~\ref{sect:l1Sense}.

The earliest applications of combinatorial methods to the uniqueness of basis problem can be found in the work of Mitjagin in the early 1970's \cites{Mitja1, Mitja2}, but it was W\'ojtowicz who gave in 1988 a precise formulation of the so-called Schr\"oder-Bernstein principle for unconditional bases (see \cite{Wojtowicz1988}*{Corollary 1}).

\begin{theorem}[Schr\"oder-Bernstein principle for unconditional bases]\label{thm:SBUB}
Let $\XB$ and $\YB$ be unconditional bases of quasi-Banach spaces $\XX$ and $\YY$, respectively. Suppose that $\XB\subsetsim \YB$ and $\YB\subsetsim \XB$. Then $\XB\sim\YB$.
\end{theorem}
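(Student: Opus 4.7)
The plan is to adapt the classical set-theoretic Schr\"oder-Bernstein construction to the setting of unconditional bases, using unconditionality of the two bases to glue local isomorphisms into a global one. Write $\XB=(\xx_n)_{n\in\Nt}$ and $\YB=(\yy_m)_{m\in\Mt}$. First I would unpack the two hypotheses to produce an injection $\alpha\colon\Nt\to\Mt$ realizing the equivalence $\XB\sim(\yy_{\alpha(n)})_{n\in\Nt}$, together with an injection $\beta\colon\Mt\to\Nt$ realizing the equivalence $\YB\sim(\xx_{\beta(m)})_{m\in\Mt}$; these come from the very definition of $\subsetsim$.

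Next I would apply the classical Schr\"oder-Bernstein theorem to the injections $\alpha$ and $\beta$ to produce a bijection $\pi\colon\Nt\to\Mt$ together with a partition $\Nt=\Nt_1\sqcup\Nt_2$ such that $\pi$ agrees with $\alpha$ on $\Nt_1$ and is inverse to $\beta$ on $\Nt_2$ (that is, $\beta$ restricts to a bijection from $\pi(\Nt_2)$ onto $\Nt_2$). The goal then reduces to proving that $\XB$ is permutatively equivalent to $\YB$ via $\pi$, i.e., that $\XB\sim(\yy_{\pi(n)})_{n\in\Nt}$.

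To obtain this global equivalence, I would first establish it on each piece of the partition by restriction. On $\Nt_1$ the isomorphism realizing the first given equivalence restricts to an isomorphism $T_1\colon[\xx_n\colon n\in\Nt_1]\to[\yy_{\pi(n)}\colon n\in\Nt_1]$ sending $\xx_n$ to $\yy_{\pi(n)}$. On $\Nt_2$ the isomorphism realizing the second equivalence, restricted to the index set $\pi(\Nt_2)\subseteq\Mt$ (on which $\beta$ is bijective onto $\Nt_2$), analogously yields $T_2\colon[\xx_n\colon n\in\Nt_2]\to[\yy_{\pi(n)}\colon n\in\Nt_2]$ sending $\xx_n$ to $\yy_{\pi(n)}$.

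The final step is to glue $T_1$ and $T_2$. Because $\XB$ is unconditional, the coordinate projection associated with the partition $\Nt=\Nt_1\sqcup\Nt_2$ is bounded and $\XX$ decomposes as an internal direct sum $[\xx_n\colon n\in\Nt_1]\oplus[\xx_n\colon n\in\Nt_2]$; the same holds for $\YY$ under the partition $\Mt=\pi(\Nt_1)\sqcup\pi(\Nt_2)$ because $\YB$ is unconditional and $\pi$ is a bijection. Thus $T_1\oplus T_2$ extends to a bounded linear isomorphism $T\colon\XX\to\YY$ with $T(\xx_n)=\yy_{\pi(n)}$ for every $n\in\Nt$. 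The only point requiring any care is this gluing in the quasi-Banach setting, where one must use the unconditional constants of the two bases together with the moduli of concavity of the two quasi-norms to control the norm of $T_1\oplus T_2$. No deeper obstacle arises, since the statement is purely qualitative and all bounds follow automatically.
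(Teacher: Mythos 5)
Your argument is correct and is essentially the standard one: the paper does not prove Theorem~\ref{thm:SBUB} itself but cites \cite{Wojtowicz1988}*{Corollary 1} and \cite{Woj1997}*{Proposition 2.11}, whose proofs run exactly as you describe — the set-theoretic Cantor–Schr\"oder–Bernstein partition applied to the two injections, restriction of each given equivalence to the corresponding subbasis, and gluing of the two pieces using the boundedness of the coordinate projections coming from unconditionality (with the modulus of concavity absorbing the triangle-inequality constant). No gap to report.
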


Wojtaszczyk rediscovered independently ten years later, in 1997, the idea of using a combinatorial argument in his study of the uniqueness of unconditional basis of $H_{p}(\TT)$ for $0<p<1$ and reproved Theorem~\ref{thm:SBUB} (see \cite{Woj1997}*{Proposition 2.11}). He added to the previous arguments Hall's refinement of the Marriage Lemma (see \cite{Hall1948}*{Theorem 1}).

\begin{theorem}[ Hall's Marriage Lemma]\label{thm:HKL}Let $\Nt$ be a set and $(\Nt_i)_{i\in I}$ be a family of finite subsets of $\Nt$. Suppose that
\[
|F|\le \left| \bigcup_{i\in F} \Nt_i\right|
\]
for all $F\subseteq I$ finite. Then there is a one-to-one map $\phi\colon I\to \Nt$ with $\phi(i)\in \Nt_i$ for every $i\in I$.
\end{theorem}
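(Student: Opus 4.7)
The plan is to prove the statement first in the case where the index set $I$ is finite, by induction on $|I|$, and then to extend the result to arbitrary $I$ by a compactness argument on a Tychonoff product of finite discrete spaces.

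For the finite case, I will induct on $n=|I|$. The base $n=1$ is immediate since the hypothesis applied to $F=\{i\}$ gives $|\Nt_i|\ge 1$, so any element of $\Nt_i$ works. For the inductive step, I distinguish two cases. \textbf{Case A:} there exists a nonempty proper subset $F\subsetneq I$ with $|F|=|\bigcup_{i\in F}\Nt_i|$, i.e., Hall's condition is \emph{tight} on $F$. By induction applied to $(\Nt_i)_{i\in F}$ we obtain a one-to-one map $\phi_1\colon F\to \Nt$ whose image is exactly $W:=\bigcup_{i\in F}\Nt_i$. Set $\Nt_i':=\Nt_i\setminus W$ for $i\in I\setminus F$. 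For any $G\subseteq I\setminus F$ one has $|F\cup G|\le |\bigcup_{i\in F\cup G}\Nt_i|=|W|+|\bigcup_{i\in G}\Nt_i'|$, so Hall's condition holds for $(\Nt_i')_{i\in I\setminus F}$; induction gives a matching $\phi_2$ on $I\setminus F$, and $\phi_1\cup\phi_2$ is the required injection. \textbf{Case B:} every nonempty proper subset $F\subsetneq I$ is \emph{slack}, i.e.\ $|F|<|\bigcup_{i\in F}\Nt_i|$. Fix any $i_0\in I$ and any $x\in\Nt_{i_0}$, and consider $(\Nt_i\setminus\{x\})_{i\in I\setminus\{i_0\}}$. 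For any $G\subseteq I\setminus\{i_0\}$, slackness gives $|G|\le |\bigcup_{i\in G}\Nt_i|-1\le |\bigcup_{i\in G}(\Nt_i\setminus\{x\})|$; by induction we get a matching extending $i_0\mapsto x$.

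To pass to arbitrary $I$, endow each $\Nt_i$ with the discrete topology and form the product $P=\prod_{i\in I}\Nt_i$, which is compact by Tychonoff's theorem since each factor is finite (hence compact). For each finite $F\subseteq I$ let
\[
C_F=\bigl\{\phi\in P\colon \phi|_F \text{ is injective}\bigr\}.
\]
Each $C_F$ depends only on the coordinates in $F$ and is a finite union of basic clopen cylinders, so it is closed in $P$. By the finite case applied to $(\Nt_i)_{i\in F_1\cup\cdots\cup F_k}$, which inherits Hall's condition from the original family, there is an injective selection on $F_1\cup\cdots\cup F_k$, and any extension of it to $P$ lies in $C_{F_1}\cap\cdots\cap C_{F_k}$. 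Hence $\{C_F\}$ has the finite intersection property, so by compactness $\bigcap_F C_F\ne\emptyset$. Any element $\phi$ of this intersection is injective on every finite subset of $I$, hence globally injective, and satisfies $\phi(i)\in\Nt_i$ by construction.

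The main obstacle is the finite induction: the case split into tight versus slack subsets is the subtle combinatorial heart of the argument, and one must verify carefully that the reduced family continues to satisfy Hall's condition in each case. The compactness step, by contrast, is routine once Tychonoff's theorem is invoked, but it is worth noting that it is precisely this step that requires a non-trivial set-theoretic tool (the Axiom of Choice, or equivalently Zorn's lemma, which could alternatively be used directly to extract a maximal matching).
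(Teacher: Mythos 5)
Your proof is correct, and it is worth noting at the outset that the paper does not actually prove this statement: Theorem~\ref{thm:HKL} is quoted directly as Theorem~1 of Hall's 1948 paper, so there is no internal argument to compare yours against. Your two-stage proof is the standard modern one and checks out in detail. In the finite induction, Case~A works because $|F\cup G|=|W|+|G|$ while $|\bigcup_{i\in F\cup G}\Nt_i|=|W|+|\bigcup_{i\in G}\Nt_i'|$, so the reduced family on $I\setminus F$ inherits Hall's condition, and the two partial matchings have disjoint ranges (one inside $W$, one outside); in Case~B every $G\subseteq I\setminus\{i_0\}$ is automatically a proper subset of $I$, so slackness legitimately supplies the extra unit needed to delete $x$. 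The compactness step is also sound: each $\Nt_i$ is nonempty by Hall's condition on singletons, each $C_F$ is the preimage under a projection of a subset of a finite discrete product and hence clopen, the finite intersection property follows from the finite case applied to $F_1\cup\cdots\cup F_k$, and a point of $\bigcap_F C_F$ is injective because injectivity is a finitary property. Two minor remarks: Hall's original 1948 argument for the infinite case proceeds by a direct selection/consistency argument on restrictions of representatives rather than by topological compactness, so your route is genuinely different from (and arguably cleaner than) the cited source; and your closing aside that Zorn's lemma ``could alternatively be used directly to extract a maximal matching'' glosses over a real issue --- a maximal partial system of distinct representatives need not be defined on all of $I$ without an additional augmenting argument --- but since that remark is not part of the proof you actually give, it does not affect correctness.
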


We next enunciate a simple lemma, whose straightforward proof we omit.
\begin{lemma}\label{lem:equivalenceDirectSums}
Let $\LL$ be a sequence space on a countable set $\Jt$, and for $j\in \Jt$ let $\XX_j$ and $\YY_j$ be quasi-Banach spaces with moduli of concavity uniformly bounded by $\kappa$. Suppose that for each $j\in\Jt$, $\XB_j$ is a normalized $K$-unconditional basic sequence of $\XX_j$ and that $\YB_j$ is an unconditional basic sequence of $\YY_j$ which is $C$-equivalent to $\XB_j$, where $K$ and $C$ are constants independent of $j$.Then the semi-normalized unconditional basic sequence $ (\oplus_{j\in \Jt}\YB_j)_{\LL}$ of $ (\oplus_{j\in \Jt}\YY_j)_{\LL}$ is $C$-equivalent to the normalized unconditional basic sequence $(\bigoplus_{j\in \Jt}\XB_j)_\LL$ of $(\bigoplus_{j\in \Jt}\XX_j)_\LL$.
\end{lemma}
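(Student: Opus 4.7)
The plan is to assemble the row-by-row equivalences $\XB_j\sim_C\YB_j$ into a single isomorphism between $[(\bigoplus_{j\in\Jt}\XB_j)_\LL]$ and $[(\bigoplus_{j\in\Jt}\YB_j)_\LL]$ via the block-diagonal construction spelled out in Section~\ref{sect:term}. Concretely, the hypothesis that $\XB_j$ and $\YB_j$ are $C$-equivalent provides, for each $j\in\Jt$, linear maps
\[
T_j\colon [\XB_j]\to[\YB_j],\qquad S_j\colon[\YB_j]\to[\XB_j],
\]
with $T_j(\xx_{j,n})=\yy_{j,n}$, $S_j(\yy_{j,n})=\xx_{j,n}$, and $\max\{\Vert T_j\Vert,\Vert S_j\Vert\}\le C$. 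Since $C$ does not depend on $j$, the paragraph in Section~\ref{sect:term} about block operators between $\LL$-sums applies: the diagonal operators
\[
T\colon\Bigl(\bigoplus_{j\in\Jt}[\XB_j]\Bigr)_\LL\to\Bigl(\bigoplus_{j\in\Jt}[\YB_j]\Bigr)_\LL,\qquad S\colon\Bigl(\bigoplus_{j\in\Jt}[\YB_j]\Bigr)_\LL\to\Bigl(\bigoplus_{j\in\Jt}[\XB_j]\Bigr)_\LL
\]
defined coordinate-wise by $T_j$ and $S_j$ are bounded with norms at most $C$, and they are mutual inverses because $S_j\circ T_j=\mathrm{Id}$ on $[\XB_j]$ and $T_j\circ S_j=\mathrm{Id}$ on $[\YB_j]$.

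Next I would verify that $T$ maps the basis vector $L_j(\xx_{j,n})$ of $(\bigoplus_{j\in\Jt}\XB_j)_\LL$ to the basis vector $L_j(\yy_{j,n})$ of $(\bigoplus_{j\in\Jt}\YB_j)_\LL$, which is immediate from the definition of the coordinate-wise action and from $T_j(\xx_{j,n})=\yy_{j,n}$. Thus $T$ realizes the $C$-equivalence of the two bases.

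I would then record that the target basis is indeed semi-normalized with constants independent of $j$: since $\XB_j$ is normalized and $\YB_j$ is $C$-equivalent to $\XB_j$, we have $C^{-1}\le\Vert\yy_{j,n}\Vert\le C$ for all $j$ and $n$, so $(\bigoplus_{j\in\Jt}\YB_j)_\LL$ is semi-normalized in $(\bigoplus_{j\in\Jt}\YY_j)_\LL$ (with the same bounds, since $\EE_\Jt$ is normalized in $\LL$). The uniform bound $K$ on the unconditionality constants of the $\XB_j$, together with the uniform bound on the moduli of concavity $\kappa$, is exactly what guarantees that both direct-sum bases are themselves unconditional basic sequences, as reviewed in Section~\ref{sect:term}.

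There is no real obstacle here: the lemma is essentially a bookkeeping statement showing that the block-diagonal operator inherits its norm bound from the uniform bound $C$ across rows. The only minor point requiring attention is that one must invoke the uniformity of $C$ (and of $K$, $\kappa$) to ensure the assembled operators are actually bounded on the whole $\LL$-sum rather than merely on each summand, and this is precisely the content of the boundedness observation for diagonal operators recorded just before the dual-space discussion in Section~\ref{sect:term}.
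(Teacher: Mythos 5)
Your argument is correct and is precisely the ``straightforward proof'' that the paper omits: one assembles the diagonal operators $T$ and $S$ from the uniformly bounded row operators $T_j$, $S_j$ using the boundedness observation for coordinate-wise operators between $\LL$-sums recorded in Section~\ref{sect:term}, and restricts to the closed spans of the basic sequences. The supplementary checks (semi-normalization of $(\bigoplus_{j}\YB_j)_\LL$ and the role of the uniform constants $K$, $\kappa$, $C$) are exactly the right bookkeeping.
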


Our first result provides sufficient conditions for an infinite direct sum of unconditional bases to be equivalent to its square.
\begin{lemma}\label{lem:squares}
Let $\LL$ be a sequence space on a countable set $\Jt$. For each $j\in\Jt$ let $\XB_j$ be a normalized $K$-unconditional basis of a quasi-Banach space $\XX_j$ with modulus of concavity bounded above by $\kappa$, where $\kappa$ and $K$ are constants independent of $j$. Suppose that one the the following conditions holds:
\begin{enumerate}[label={{(\alph*)}}, leftmargin=*]
\item\label{it:squaresXj}There is a constant $C$ such that $\XB_j^2\sim_C\XB_j$ for all $j\in\Jt$.
\item\label{it:squaresL} $\LL^2$ is lattice isomorphic to $\LL$, and $\XB_j=\YB$ for all $j\in\Jt$ and some unconditional basis $\YB$.
\item\label{it:SubSym} $\LL$ is subsymmetric, and there is constant $C$ such that, for each $j\in\Jt$, $\XB_j\subsetsim_C\XB_k$ for infinitely many values of $k\in\Jt=\NN$.
\end{enumerate}
Then the basis $\XB=(\bigoplus_{j\in \Jt} \XB_j)_\LL$ is equivalent to a permutation of its square.
\end{lemma}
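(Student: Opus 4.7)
\emph{Proof plan.} The three cases call for rather different strategies, all built on top of Lemma~\ref{lem:equivalenceDirectSums} and the Schr\"oder-Bernstein principle (Theorem~\ref{thm:SBUB}).

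For \emph{(a)}, I would simply apply Lemma~\ref{lem:equivalenceDirectSums} with $\YY_j=\XX_j^2$ and $\YB_j=\XB_j^2$; the uniform hypothesis $\XB_j^2\sim_C\XB_j$ then yields $(\bigoplus_j\XB_j^2)_\LL\sim_C\XB$. It only remains to identify the left-hand basis with $\XB^2$ up to a permutation of the index set and a universal constant. This is a Fubini-type step that comes down to the elementary two-sided estimate
\[
\max(\Vert a\Vert_\LL,\Vert b\Vert_\LL) \le \Vert |a|\vee|b|\Vert_\LL \le 2\kappa\,\max(\Vert a\Vert_\LL,\Vert b\Vert_\LL),
\]
where $\kappa$ is the modulus of concavity of $\LL$. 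For \emph{(b)}, writing $\XB=\LL(\YB)$, a direct computation gives $\XB^2=\LL^2(\YB)$ isometrically under the natural identification of index sets. A lattice isomorphism $T\colon\LL^2\to\LL$ sends each atom $\ee'_{(i,j)}$ of $\LL^2$ to a scalar multiple $\lambda_{(i,j)}\,\ee_{\pi(i,j)}$ of an atom of $\LL$, where $\pi\colon\{1,2\}\times\Jt\to\Jt$ is a bijection; the normalization of both unit vector bases forces $|\lambda_{(i,j)}|\in[\Vert T^{-1}\Vert^{-1},\Vert T\Vert]$. The map $L_{(i,j)}(y)\mapsto\lambda_{(i,j)}\,L_{\pi(i,j)}(y)$ then extends to an isomorphism $\LL^2(\YB)\to\LL(\YB)$ realizing the desired permutative equivalence $\XB^2\sim\XB$.

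The core of the work lies in \emph{(c)}, which I would attack via the Schr\"oder-Bernstein principle (Theorem~\ref{thm:SBUB}): since the inclusion into the first coordinate gives $\XB\subsetsim_1\XB^2$ for free, I only need to produce $\XB^2\subsetsim\XB$. Set $S_j=\{k\in\NN:\XB_j\subsetsim_C\XB_k\}$, which is infinite by hypothesis. A greedy induction produces an injection $\phi\colon\{1,2\}\times\NN\to\NN$ with $\phi(i,j)\in S_j$ for every $(i,j)$ and such that both $\ell\mapsto\phi(1,\ell)$ and $\ell\mapsto\phi(2,\ell)$ are strictly increasing with disjoint images $A,B\subseteq\NN$; the infinitude of each $S_\ell$ lets the induction avoid the finite forbidden set at each step. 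Concatenating the witness embeddings $\psi_{i,j}\colon\Nt_j\hookrightarrow\Nt_{\phi(i,j)}$ provides a bijection from the index set of $\XB^2$ onto a subbasis of $\XB$.

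The norm comparison along this bijection is then a short calculation. Setting $b_{(i,\ell)}=\Vert\sum_n a_{(i,\ell,n)}\xx_{\ell,n}\Vert_{\XX_\ell}$, the $\XB^2$-norm of a finitely supported linear combination equals $\max_i\Vert(b_{(i,\ell)})_\ell\Vert_\LL$, while the corresponding image norm in $\XB$, after applying the uniform $C$-equivalences row by row, is equivalent to $\Vert c\Vert_\LL$ with $c$ supported on $A\cup B$ and $c_{\phi(i,\ell)}=b_{(i,\ell)}$. Subsymmetry of $\LL$, applied separately to the two order-preserving restrictions of $\phi$, identifies $\Vert c|_A\Vert_\LL=\Vert b^{(1)}\Vert_\LL$ and $\Vert c|_B\Vert_\LL=\Vert b^{(2)}\Vert_\LL$; disjointness of $A,B$ together with $1$-unconditionality and the modulus of concavity of $\LL$ sandwich $\Vert c\Vert_\LL$ between $\max$ and $2\kappa\,\max$ of these two quantities. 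The main obstacle is exactly this bookkeeping in \emph{(c)}: arranging $\phi$ so that each factor is order-preserving, so that the weaker hypothesis of subsymmetry (rather than full symmetry of $\LL$) is enough.
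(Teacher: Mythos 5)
Your proposal is correct and follows essentially the same route as the paper: case (a) via Lemma~\ref{lem:equivalenceDirectSums} together with the Fubini-type identification of $\XB^2$ with $(\bigoplus_j\XB_j^2)_\LL$, case (b) via the lattice isomorphism $\LL^2\to\LL$ acting on atoms, and case (c) via a greedy selection from the infinite sets $\{k\colon\XB_j\subsetsim_C\XB_k\}$ combined with subsymmetry and the Schr\"oder--Bernstein principle (Theorem~\ref{thm:SBUB}). The only organizational difference is that in (c) the paper first rewrites $\XB^2$ as $(\bigoplus_{j}\XB_{\phi(j)})_\LL$ using the interleaving lattice isomorphism $\LL^2\cong\LL$ before embedding into a subbasis of $\XB$, whereas you embed the two copies directly along disjoint order-preserving injections; the two arguments are interchangeable.
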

\begin{proof}
The unconditional basis $\XB^2$ is equivalent to a permutation of $(\bigoplus_{j\in\Jt} \XB_j^2)_\LL$. Thus, if \ref{it:squaresXj} holds, applying Lemma~\ref{lem:equivalenceDirectSums} yields $\XB^2\sim\XB$.

The basis $\XB^2$ is also equivalent to a permutation of $(\bigoplus_{j\in\Jt} \XB_j)_{\LL^2}$.
Therefore, in the cases \ref{it:squaresL} and \ref{it:SubSym}, since $\LL^2$ is lattice isomorphic to $\LL$, $\XB^2$ is equivalent to a permutation of $\XB':=(\bigoplus_{j\in\Nt}\XB_{\phi(j)})_\LL$ for some map $\phi\colon\Nt\to\Nt$. If \ref{it:squaresL} holds, $ \XB_{\phi(j)} =\XB_j$ for all $j\in\Nt$ so that $\XB'=\XB$. Finally, assume that \ref{it:SubSym} holds. Then, we recursively construct an increasing map $\psi\colon\NN\to\NN$ such that
$\XB_{\phi(j)}\subsetsim_C \XB_{\psi(j)}$. By Lemma~\ref{lem:equivalenceDirectSums},
$
\XB'\subsetsim \XB'':=\left( \bigoplus_{n=1}^\infty \XB_{\psi(n)}\right)_\LL
$.
By subsymmetry, $\XB''$ is isometrically equivalent to a subbasis of $\XB$. Hence, by Theorem~\ref{thm:SBUB}, $\XB^2\sim\XB$.
\end{proof}

\subsection{The Cassaza-Kalton Paradigm extended} In a couple of papers of classical elegance (see \cites{CasKal1998, CasKal1999}), Casazza and Kalton crucially used the lattice structure induced by an unconditional basis on a Banach space to provide a much shorter proof than the original one of the uniqueness of unconditional basis UTAP of $c_{0}(\ell_{1})$. Of course, these techniques were not yet available when Bourgain et al.\ wrote their AMS \emph{Memoir} \cite{BCLT1985}, otherwise the proofs of their aforementioned results would have been considerably simpler.

Cassaza and Kalton's methods were transferred to the setting of quasi-Banach lattices and put into practice in \cite{AKL2004} to obtain the uniqueness of unconditional basis UTAP in the spaces $\ell_{1}(\ell_{p})$ and $\ell_{p}(\ell_{1})$ for $0<p<1$, and in \cite{AlbiacLeranozExpoMath} to give a much shorter proof than the original one of the uniqueness of unconditional basis of $\ell_{p}(c_{0})$ for $0<p<1$ (cf.\ \cite{AlbiacLeranoz2002}). The extension of those methods to quasi-Banach lattices requires the notions of $L$-convexity and anti-Euclidean spaces, which we recall next for the convenience of the reader.

A quasi-Banach lattice $\XX$ is said to be \emph{$L$-convex} if there is $0<\varepsilon<1$ so that
\[
\varepsilon \Vert f \Vert \le \max_{1\le i \le k} \Vert f_i\Vert
\]
whenever $f$ and $(f_i)_{i=1}^k$ in $\XX$ satisfy $(1-\varepsilon)kf\ge \sum_{i=1}^k f_i$ and $0\le f_i\le f$ for every $i=1$, \dots, $k$. We say that a family $(\XX_j)_{j\in\Jt}$ of quasi-Banach lattices is $L$-convex if there is $\varepsilon>0$ such each lattice $\XX_j$ is $L$-convex with constant $\varepsilon$ for every $j\in \Jt$. Kalton \cite{Kalton1984b} showed that a quasi-Banach lattice $\XX$ is $L$-convex if and only if it is \emph{$p$-convex} for some $p>0$, that is, for some constant $C$ and all $f_{1},\dots, f_{k}$ in $\XX$ we have
\begin{equation}\label{eq:lconvex}
\left\Vert\left(\sum_{i=1}^k |f_i|^p\right)^{1/p}\right\Vert
\le C \left(\sum_{i=1}^k \Vert f_i\Vert^p\right)^{1/p}.
\end{equation}
The element $(\sum_{i=1}^k \vert f_i\vert^p)^{1/p}$ of $\XX$ is defined via the procedure outlined in \cite{LindenstraussTzafriri1979}*{pp.\ 40-41}. The optimal constant in \eqref{eq:lconvex} will be denoted by $M_p(\XX)$.

Quantitatively, if $\XX$ is $L$-convex with constant $\varepsilon$, there exists $r>0$ and constants $(C_p)_{0<p<r}$ depending only on $\varepsilon$ and the modulus of concavity of $\XX$, such $M_p(\XX)\le C_p$ for all $0<p<r$. Conversely, if $\XX$ is is a $p$-convex quasi-Banach lattice with $M_p(\XX)\le C$, there exist $\kappa$ and $\varepsilon$ depending only on $p$ and $C$ such that $\XX$ is at once an $L$-convex lattice with constant $\varepsilon$ and a quasi-Banach space with modulus of concavity at most $\kappa$. This quantitative approach is perhaps the easiest way to see that if $\LL$ is an $L$-convex sequence space on a set $\Jt$ and $(\XX_j)_{j\in\Jt}$ is a family of $L$-convex quasi-Banach lattices, then $\XX:=(\bigoplus_{j\in\Jt} \XX_j)_\LL$ is an $L$-convex lattice. In fact, if $p>0$ and $C\ge 1$ are such that $M_p(\LL)\le C$ and $M_p(\XX_j)\le C$ for all $j\in\Jt$, then $M_p(\XX)\le C^2$.

A quasi-Banach space $\XX$ is then called \emph{natural} if it is isomorphic to a subspace of an $L$-convex quasi-Banach lattice. Most quasi-Banach spaces arising in analysis are natural. However, it should be pointed out that there are non-natural spaces with an unconditional basis \cite{K86}. It is known \cite{Kalton1984b} that any lattice structure on a natural quasi-Banach space is $L$-convex. Thus, once we make sure that a quasi-Banach space $\XX$ has a lattice structure, the notions of $L$-convexity and naturality become equivalent.

Our results will apply to those natural spaces where the lattice structure is induced by an unconditional basis. In such spaces any unconditional basis induces an $L$-convex lattice structure; then many of the standard techniques of Banach lattice theory can be employed in this setting. For most applications it is easy to verify that the spaces of interest are natural either by by showing that some given unconditional basis is already $p$-convex for some $p>0$ or by identifying them as subspaces of $L$-convex lattices.
\begin{definition}
A family $(\XB_j)_{j\in\Jt}$ of unconditional bases of quasi-Banach spaces $(\XX_j)_{j\in\Jt}$ is said to be $L$-convex if there are constants $K\ge 1$ and $0<\varepsilon<1$ such that $\XB_j$ is $K$-unconditional and it induces an $L$-convex lattice structure on $\XX_j$ with constant $\varepsilon$ for all $j\in\Jt$.
\end{definition}
Notice that if $(\XB_j)_{j\in\Jt}$ is an $L$-convex family of unconditional bases of quasi-Banach spaces $(\XX_j)_{j\in\Jt}$, then the modulus of concavity of the space $\XX_j$ is uniformly bounded. Moreover, if $\LL$ is an $L$-convex sequence space over $\Jt$, then $(\bigoplus_{j\in\Jt} \XB_j)_\LL$ is an unconditional basis of the quasi-Banach space $(\bigoplus_{j\in\Jt} \XX_j)_\LL$ which induces a structure of $L$-convex lattice.

A Banach space $\XX$ is said to be \emph{anti-Euclidean} if it does not contain uniformly complemented copies of finite-dimensional Hilbert spaces. As for $L$-convexity, to deal with families of quasi-Banach spaces we need a more quantitative definition.

\begin{definition}\label{Def:anti-Eucl}
A family $(\XX_j)_{j\in\Jt}$ of Banach spaces is said to be \emph{anti-Euclidean} if for every $R\in(0,\infty)$ there is $k\in\NN$ such that $\Vert S\Vert\, \Vert T\Vert\ge R$ whenever $j\in \Jt$ and $S\colon\ell_2^k\to \XX_j$, $T\colon\XX_j\to\ell_2^k$ are linear operators with $T\circ S=\Id_{\ell_2^k}$.
\end{definition}
By the principle of local reflexivity, a family $(\XX_j)_{j\in\Jt}$ of Banach spaces is anti-Euclidean if and only if $(\XX_j^*)_{j\in\Jt}$ is. The most natural and important examples of anti-Euclidean spaces are $c_0$ and $\ell_1$. Let us bring up a result by Casazza and Kalton.
\begin{theorem}[\cite{CasKal1999}*{Proposition 2.4}]\label{thm:AESums}
Suppose that the countable family $(\XX_j)_{j\in\Jt}$ of Banach spaces is anti-Euclidean. Then the Banach space $(\bigoplus_{j\in\Jt} \XX_j)_{\ell_1}$ is anti-Euclidean.
\end{theorem}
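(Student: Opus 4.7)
The plan is to argue by contradiction. Suppose the family $(\XX_j)_{j\in\Jt}$ is anti-Euclidean while $\XX:=(\bigoplus_{j\in\Jt}\XX_j)_{\ell_1}$ is not. Then there exist $R>0$ and a sequence $k_n\to\infty$ together with linear operators $S_n\colon\ell_2^{k_n}\to\XX$ and $T_n\colon\XX\to\ell_2^{k_n}$ satisfying $T_n\circ S_n=\Id_{\ell_2^{k_n}}$ and $\Vert S_n\Vert\cdot\Vert T_n\Vert\le R$; after rescaling we may assume $\Vert S_n\Vert,\Vert T_n\Vert\le\sqrt R$.

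The first step is to exploit the structure of the $\ell_1$-direct sum to split each operator into its coordinate components. Writing $L_j\colon\XX_j\to\XX$ for the canonical embedding and $P_j\colon\XX\to\XX_j$ for the canonical projection, set $S_n^{(j)}:=P_j\circ S_n$ and $T_n^{(j)}:=T_n\circ L_j$. By the $\ell_1$-sum structure, $\sup_{j\in\Jt}\Vert T_n^{(j)}\Vert\le\Vert T_n\Vert$, while $\sum_{j\in\Jt}\Vert S_n^{(j)}(u)\Vert_{\XX_j}\le\Vert S_n\Vert\,\Vert u\Vert$ for every $u\in\ell_2^{k_n}$. Since $S_n$ has finite-dimensional range, the relation $T_n\circ S_n=\Id$ translates into the norm-convergent identity $\sum_{j\in\Jt}T_n^{(j)}\circ S_n^{(j)}=\Id_{\ell_2^{k_n}}$. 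Taking traces on $\ell_2^{k_n}$ yields
\[
k_n\;=\;\sum_{j\in\Jt}\operatorname{tr}(T_n^{(j)}S_n^{(j)}).
\]

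The second step is to convert the anti-Euclidean hypothesis into a quantitative bound on each trace via a singular value decomposition. For each $j$, let $\sigma_1^{(j)}\ge\sigma_2^{(j)}\ge\cdots\ge 0$ denote the singular values of the endomorphism $T_n^{(j)}S_n^{(j)}$ of $\ell_2^{k_n}$, and fix a threshold $\delta>0$. The span $E_\delta^{(j)}$ of the right-singular vectors corresponding to singular values exceeding $\delta$ has some dimension $r_\delta^{(j)}$, and a short SVD argument produces a factorization of $\Id_{\ell_2^{r_\delta^{(j)}}}$ through $\XX_j$ whose product of norms is at most $R/\delta$. The anti-Euclidean hypothesis on the family then yields $r_\delta^{(j)}\le k(R/\delta)$ uniformly in $j$ and $n$.

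The final step is to combine these spectral estimates with the trace identity to derive a uniform upper bound on $k_n$ and so contradict $k_n\to\infty$. Bounding $|\operatorname{tr}(T_n^{(j)}S_n^{(j)})|\le\sum_i\sigma_i^{(j)}$ and splitting the singular values at $\delta$, the contribution of the large singular values is controlled by $k(R/\delta)\sum_j\Vert T_n^{(j)}\Vert\,\Vert S_n^{(j)}\Vert$, while the tail of singular values at most $\delta$ must be shown to be negligible. The main obstacle is precisely this aggregation step: converting the pointwise $\ell_1$-summability $\sum_j\Vert S_n^{(j)}(u)\Vert_{\XX_j}\le\sqrt R$ into a uniform control on $\sum_j\Vert T_n^{(j)}\Vert\,\Vert S_n^{(j)}\Vert$ that competes with $k_n/k(R/\delta)$. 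A dyadic partition of $\Jt$ according to the magnitude of $\Vert S_n^{(j)}\Vert$, together with re-application of the anti-Euclidean property at each scale, appears to be the natural route to execute this balancing and close the argument.
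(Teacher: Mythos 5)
Before anything else, note that the paper does not actually prove this statement: it is imported verbatim from Casazza--Kalton \cite{CasKal1999}*{Proposition 2.4}, so the only proof to compare yours with is theirs. Your proposal is not a complete proof, and you say as much yourself: the ``aggregation step'' you flag in the last paragraph is not a technical loose end but the entire content of the theorem, and nothing in the preceding setup resolves it. The setup itself is sound --- the coordinate splitting $S_n^{(j)}=P_j\circ S_n$, $T_n^{(j)}=T_n\circ L_j$, the identity $k_n=\sum_{j}\operatorname{tr}(T_n^{(j)}S_n^{(j)})$, and the SVD argument showing $r_\delta^{(j)}\le k(R/\delta)$ are all correct. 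But the two estimates you would then need both fail. First, $\sum_{j}\Vert T_n^{(j)}\Vert\,\Vert S_n^{(j)}\Vert$ is \emph{not} controlled by $\Vert T_n\Vert\,\Vert S_n\Vert$: for the formal identity $S\colon\ell_2^{k}\to(\bigoplus_{j=1}^{k}\FF)_{\ell_1}$ one has $\Vert S\Vert=\sqrt{k}$ while $\sum_{j}\Vert S^{(j)}\Vert=k$, so the ratio grows like $\sqrt{k_n}$ and your bound on the large singular values can itself be of order $k_n$, yielding nothing. Second, the tail of small singular values is worse: per coordinate it is only bounded by $\delta\cdot\operatorname{rank}(T_n^{(j)}S_n^{(j)})\le\delta k_n$, and summing this over the unboundedly many coordinates that $S_n$ touches gives a quantity that is not $o(k_n)$ for any fixed $\delta$.

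The root of the problem is that the only summable information the $\ell_1$-structure provides is $\sum_{j}\Vert S_n^{(j)}(u)\Vert\le\Vert S_n\Vert\,\Vert u\Vert$ for each \emph{fixed} $u$, together with $\sup_{j}\Vert T_n^{(j)}\Vert\le\Vert T_n\Vert$; fed into the trace identity these only reproduce the trivial bound $k_n\le\Vert S_n\Vert\,\Vert T_n\Vert\,k_n$, and beating that trivial bound is exactly what the theorem asserts. What is missing is a per-coordinate estimate of the form $|\operatorname{tr}(T_n^{(j)}S_n^{(j)})|\le\varepsilon\,c_j+C_\varepsilon d_j$ in which the quantities $c_j,d_j$ are genuinely summable against the $\ell_1$-norm of $S_n$ (rather than involving the operator norms $\Vert S_n^{(j)}\Vert$, whose sum is uncontrolled); extracting such an estimate from the anti-Euclidean hypothesis requires a finer quantitative trace-duality input than the dimension bound $r_\delta^{(j)}\le k(R/\delta)$, and this is precisely the ingredient Casazza and Kalton supply. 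Your suggested dyadic partition of $\Jt$ by the size of $\Vert S_n^{(j)}\Vert$ is not developed enough to see that it produces such an estimate, so as it stands the argument does not close. I recommend you either work through \cite{CasKal1999}*{\S2} or reformulate the anti-Euclidean hypothesis in trace terms before attempting the summation over $j$.
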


Note that, although Definition~\ref{Def:anti-Eucl} makes sense for quasi-Banach spaces, as a matter of fact we only state it (and will use it) for the ``closest'' Banach spaces to the quasi-Banach spaces we study, i.e., their Banach envelopes. Formally speaking, the \emph{Banach envelope} of a quasi-Banach space $\XX$ consists of a Banach space $\widehat{\XX}$ together with a linear contraction $E_\XX\colon\XX \to \widehat{\XX}$, called the envelope map of $\XX$, satisfying the following universal property: for every Banach space $\YY$ and every linear contraction $T\colon\XX \to\YY$ there is a unique linear contraction $\widehat{T}\colon \widehat{\XX}\to \YY$ such that $\widehat{T}\circ E_\XX=T$. The Banach envelope of a quasi-Banach space can be effectively constructed from the Minkowski functional of $\co(B_\XX)$. This construction shows that $E_\XX(\co(B_\XX))$ is a dense subset of $B_{\widehat{\XX}}$. We say that a Banach space $\YY$ is the Banach envelope of $\XX$ via the map $J\colon\XX\to\YY$ if the associated map $\widehat{J}\colon\widehat{\XX}\to\YY$ is an isomorphism.

The Banach envelope of a minimal sequence space is isometrically isomorphic to a minimal sequence space via the inclusion map (see \cite{AABW2019}*{Proposition 10.9}). We will need the following result.

\begin{proposition}\label{prop:EnvSums}
Let $\LL$ be a minimal sequence space on $\Jt$. Suppose that $\XX_j$ is a quasi-Banach space with modulus of concavity bounded by a uniform constant $\kappa$ for all $j\in\Jt$. Then the Banach envelope of $\XX=(\bigoplus_{j\in\Jt} \XX_j)_\LL$ is isometrically isomorphic to $\YY=(\bigoplus_{j\in\Jt} \widehat{\XX_j})_{\widehat{\LL}}$ via the map
\[
f=(f_j)_{j\in\Jt} \mapsto J(f)=(E_{\XX_j}(f_j))_{j\in\Jt}.
\]
\end{proposition}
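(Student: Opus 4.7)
The plan is to identify $\YY$ as the Banach envelope of $\XX$ via $J$ by a duality argument, using the direct sum duality formula $(\bigoplus_{j\in\Jt}\XX_{j})_{\LL}^{*}=(\bigoplus_{j\in\Jt}\XX_{j}^{*})_{\LL^{*}}$ recorded just before the statement together with the universal property of the Banach envelope. First I would check that $J$ is a well-defined linear contraction: every $E_{\XX_{j}}$ is a contraction and, by \cite{AABW2019}*{Proposition 10.9}, $\widehat{\LL}$ is a minimal sequence space on $\Jt$ via the inclusion, so
\[
\Vert J(f)\Vert_{\YY}
=\left\Vert(\Vert E_{\XX_{j}}(f_{j})\Vert_{\widehat{\XX_{j}}})_{j\in\Jt}\right\Vert_{\widehat{\LL}}
\le\left\Vert(\Vert f_{j}\Vert_{\XX_{j}})_{j\in\Jt}\right\Vert_{\widehat{\LL}}
\le\Vert f\Vert_{\XX}.
\]
The universal property of $\widehat{\XX}$ then supplies a contraction $\widehat{J}\colon\widehat{\XX}\to\YY$ with $\widehat{J}\circ E_{\XX}=J$, and it remains to upgrade $\widehat{J}$ to an isometric isomorphism.

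The core of the argument is to show that $\widehat{J}^{*}\colon\YY^{*}\to\widehat{\XX}^{*}$ is itself an isometric isomorphism. Since $\widehat{\LL}$ is minimal, the direct sum duality formula applies both to $\XX$ and to $\YY$; combined with the canonical isometric identifications $\widehat{\XX_{j}}^{*}\cong\XX_{j}^{*}$, $\widehat{\LL}^{*}\cong\LL^{*}$ and $\widehat{\XX}^{*}\cong\XX^{*}$ that come from the universal property of the envelope, it produces a chain of isometric isomorphisms
\[
\YY^{*}\cong\left(\bigoplus_{j\in\Jt}\XX_{j}^{*}\right)_{\LL^{*}}\cong\XX^{*}\cong\widehat{\XX}^{*},
\]
under which the $j$th coordinate of $\psi=(\psi_{j})_{j\in\Jt}\in\YY^{*}$ is pushed to $\phi_{j}:=\psi_{j}\circ E_{\XX_{j}}\in\XX_{j}^{*}$. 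A direct computation shows that $\widehat{J}^{*}$ agrees with the composite of these isomorphisms: for $f=(f_{j})_{j\in\Jt}\in\XX$,
\[
(\widehat{J}^{*}\psi)(E_{\XX}(f))=\psi(J(f))=\sum_{j\in\Jt}\psi_{j}(E_{\XX_{j}}(f_{j}))=\sum_{j\in\Jt}\phi_{j}(f_{j}),
\]
which under $\widehat{\XX}^{*}\cong\XX^{*}$ is exactly the value of $(\phi_{j})_{j\in\Jt}\in\XX^{*}$ at $f$. Therefore $\widehat{J}^{*}$ is an isometric isomorphism, and by a standard duality argument (if $T^{*}$ is an isometric surjection, then isometry of $T$ follows from $\Vert Tx\Vert=\sup_{\Vert\psi\Vert\le 1}|(T^{*}\psi)(x)|$ and surjectivity of $T$ follows from Hahn--Banach and injectivity of $T^{*}$) so is $\widehat{J}$.

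I expect the main obstacle to be the bookkeeping required to verify that the two applications of the direct sum duality (one for $\XX$ and one for $\YY$) interlock correctly with the three envelope-duality identifications to match $\widehat{J}^{*}$; this is essentially the only non-automatic step and it relies on minimality of both $\LL$ and $\widehat{\LL}$ to be able to invoke the direct sum duality on both sides. As a sanity check, one can alternatively verify the result directly by proving that $J(\XX)$ is dense in $\YY$ (finitely supported vectors are dense in $\YY$ by minimality of $\widehat{\LL}$, and each coordinate can be approximated by an element of $E_{\XX_{j}}(\XX_{j})$, which is dense in $\widehat{\XX_{j}}$) and that $\widehat{J}$ is isometric on the dense subspace $E_{\XX}(\XX)$ via the Hahn--Banach formula $\Vert E_{\XX}(f)\Vert_{\widehat{\XX}}=\sup_{\Vert\phi\Vert_{\widehat{\XX}^{*}}\le 1}|\phi(E_{\XX}(f))|$ combined with the same duality identification above.
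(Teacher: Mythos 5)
Your argument is correct, but it runs in the opposite direction from the paper's. The paper works entirely on the primal side: since the envelope is built from the Minkowski functional of $\co(B_\XX)$, so that $E_\XX(\co(B_\XX))$ is dense in $B_{\widehat{\XX}}$, it suffices to check that $J(\co(B_\XX))$ is dense in $B_\YY$, and this is done by an explicit two-step approximation --- write $f_j=\Vert f_j\Vert\, g_j$, approximate each normalized coordinate $g_j$ by $E_{\XX_j}(h_j)$ with $h_j\in\co(B_{\XX_j})$, and approximate the scalar sequence $(\Vert f_j\Vert)_{j\in\Jt}\in B_{\widehat{\LL}}$ by a finitely supported element of $\co(B_\LL)$. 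You instead dualize: you apply the direct-sum duality formula twice (to $\XX$ over $\LL$ and to $\YY$ over $\widehat{\LL}$, the latter legitimized by the minimality of $\widehat{\LL}$) and compose with the three envelope-dual identifications to conclude that $\widehat{J}^{*}$ is an isometric isomorphism, then recover $\widehat{J}$ by Hahn--Banach. Both routes work. The paper's is more elementary and self-contained --- it never invokes the identification $(\bigoplus_{j\in\Jt}\XX_j)_\LL^*\cong(\bigoplus_{j\in\Jt}\XX_j^*)_{\LL^*}$, which is stated but not proved in Section~\ref{sect:term} --- and it delivers the slightly stronger fact that $J(\co(B_\XX))$ is dense in $B_\YY$. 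Your route is shorter once that duality formula is granted, but the interlocking of identifications you flag is the real content, and in the final step note that injectivity of $\widehat{J}^{*}$ by itself only gives dense range of $\widehat{J}$: you need the isometry already established together with completeness of $\widehat{\XX}$ to see that the range is closed and hence all of $\YY$.
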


\begin{proof}
Since $J$ defines a linear contraction from $\XX$ into $\YY$, it suffices to prove that $J(\co(B_\XX))$ is a dense subset of $B_\YY$. Let $f=(f_j)_{j\in\Jt}\in B_\YY$ and $\varepsilon>0$. For each $j\in\Jt$ set $g_j=f_j/\Vert f_j\Vert$ if $f_j\not=0$ and $g_j=0$ otherwise. Since $g_j\in B_{\widehat{\XX_j}}$, for each $j\in\Jt$ there is $h_j\in \co(B_{\XX_j})$ such that $\Vert g_j-E_{\XX_j}(h_j)\Vert\le\varepsilon/2$. Put $\Gamma=(\gamma_j)_{j\in\Jt}$, where $\gamma_j=\Vert f_j\Vert$ for $j\in\Jt$. Since $\Gamma\in B_{\widehat{\LL}}$, there is $\Lambda\in \co(B_\LL)$ such that $\Vert \Gamma-\Lambda\Vert_{\widehat{\LL}}\le\varepsilon/2$. Moreover, passing to a suitable projection, we can choose $\Lambda$ to be finitely supported. Then, if we denote $\Lambda=(\lambda_j)_{j\in\Jt}$, we have that $h:=(\lambda_j h_j)_{j\in \Jt}\in\co(B_\XX)$. Therefore, if $g=(\lambda_j g_j)_{j\in \Jt}$,
\begin{align*}
\Vert f -E(h)\Vert
&\le \Vert f -g\Vert + \Vert g- E(h)\Vert\\
&=\Vert \Gamma-\Lambda\Vert_{\widehat{\LL}}+\left\Vert ( \lambda_j \Vert g_j -E_j(h_j)\Vert)_{j\in\Jt} \right\Vert_{\widehat{\LL}}\\
&\le \frac{\varepsilon}{2}+\frac{\varepsilon}{2}\Vert \Lambda\Vert_{\widehat{\LL}}\\
&\le \varepsilon.\qedhere
\end{align*}
\end{proof}

In most cases, the proof of the uniqueness of unconditional basis in a given Banach (or quasi-Banach) space also sheds light onto the unconditional structure of its complemented subspaces with an unconditional basis. A sequence $\YB=(\yy_m)_{m\in \Mt}$ in a quasi-Banach space $\XX$ is said to be \emph{complemented} if its closed linear span $\YY= [\YB]$ is a complemented subspace of $\XX$, i.e., there is a bounded linear map $P\colon\XX\to\YY$ with $P|_{\YY}=\Id_\YY$. An unconditional basic sequence $\YB=(\yy_m)_{m\in \Mt}$ is complemented in $\XX$ if and only if there exists a sequence $\YB^*=(\yy_m^*)_{m\in \Mt}$ in $\XX^*$ such that $\yy_m^*(\uu_n)=\delta_{m,n}$ for all $(m,n)\in \Mt^2$ and there is a bounded linear map $P\colon\XX\to \XX$ given by
\begin{equation}\label{eq:projCUBS}
P(f)=P{[\YB,\YB^*]}(f)=\sum_{m\in \Mt} \yy_m^*(f) \, \yy_m, \quad f\in\XX,
\end{equation}
in which case
\[
\Gamma{[\YB,\YB^*]}:=
\sup\left\{ \left\Vert \sum_{m\in M} \yy_m^*(f) \, \yy_m\right\Vert \colon M\subseteq \Mt,\; f\in B_\XX \right\}<\infty.
\]
We will refer to $\YB^*$ as a sequence of \emph{projecting functionals} for $\YB$.

To understand the simplifications derived from taking into account the lattice structure induced by an unconditional basis $\XB$ on the entire space $\XX$, we must look at the supports of $\YB$ and $\YB^*$ with respect to $\XB$.

\begin{definition}Let $\XX$ be a quasi-Banach space with an unconditional basis $\XB$. We say that an unconditional basic sequence $\YB=(\yy_m)_{m\in\Mt}$ is \emph{well complemented} in $\XX$ if it is complemented in $\XX$ and there is a sequence $\YB^*=(\yy_m^*)_{m\in\Mt}$ of projecting functionals for $\YB$ such that:
\begin{enumerate}[label=(\roman*),widest=ii]
\item $\supp(\yy_m^*)\subseteq \supp(\yy_m)$ for all $m\in \Mt$, and
\item $(\supp(\yy_m))_{m\in\Mt}$ is a pairwise disjoint family consisting of finite sets.
\end{enumerate}
In this case, we say that $\YB^*$ is a sequence of \emph{good projecting functionals} for $\YB$. If $\Gamma{[\YB,\YB^*]} \le C$ we will say that $\YB$ is well $C$-complemented and that $\YB^*$ are good $C$-projecting functionals.
\end{definition}

\begin{remark}
Note that a subbasis of a well $C$-complemented basic sequence $(\yy_m)_{m\in\Mt}$ is a a well $C$-complemented basic sequence. In particular, if $(\yy_m^*)_{m\in\Mt}$ are good $C$-projecting functionals, $\Vert \yy_m\Vert\, \Vert \yy_m^*\Vert\le C$ for all $m\in\Mt$.
\end{remark}

The following definition identifies and gives relief to an unstated feature shared by some unconditional bases. Examples of such bases can be found, e.g., in \cites{Kalton1977, CasKal1998, AlbiacLeranoz2008}, where the property naturally arises in connection with the problem of uniqueness of unconditional basis.

\begin{definition}
A normalized unconditional basis $\XB=(\xx_n)_{n\in \Nt}$ of a quasi-Banach space will be said to be \emph{universal for well complemented block basic sequences} if for every normalized well complemented basic sequence $\YB=(\yy_m)_{m\in \Mt}$ of $\XB$ there is a map $\pi\colon \Mt\to \Nt$ such that $\pi(m)\in\supp(\yy_n)$ for every $m\in \Mt$, and $\YB$ is equivalent to the rearranged subbasis $(\xx_{\pi(m)})_{m\in \Mt}$ of $\XB$. In the case when there is a function $\eta\colon[1,\infty)\to [1,\infty)$ such that $\YB$ is $\eta(C)$-equivalent to $(\xx_{\pi(m)})_{m\in \Mt}$ of $\XB$ whenever $\YB$ is well $C$-complemented, we say that $\XB$ is \emph{uniformly universal for well complemented block basic sequences} (with function $\eta$).
\end{definition}

Thus, the following theorem summarizes what can be rightfully called the ``Casazza-Kalton paradigm'' to tackle the uniqueness of unconditional basis problem extended to quasi-Banach lattices. To be able to prove it in this optimal form (even for locally convex spaces) required the very recent solution in the positive of the ``canceling squares'' problem (see \cite{AlbiacAnsorena2020b}).

\begin{theorem}[see \cite{AlbiacAnsorena2020b}*{Theorem 3.9}]\label{thm:paradigm}
Let $\XX$ be a quasi-Banach space with a normalized unconditional basis $\XB$. Suppose that:
\begin{enumerate}[label=(\roman*),leftmargin=*, widest=iii]
\item The lattice structure induced by $\XB$ in $\XX$ is $L$-convex;
\item The Banach envelope of $\XX$ is anti-Euclidean;
\item $\XB$ is universal for well complemented block basic sequences; and
\item $\XB\sim \XB^2$.
\end{enumerate}
Then $\XX$ has a UTAP unconditional basis.
\end{theorem}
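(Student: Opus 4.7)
Let $\YB = (\yy_m)_{m \in \Mt}$ be any normalized unconditional basis of $\XX$. The plan is to show that $\YB$ is permutatively equivalent to $\XB$, which I would carry out in three stages.

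\textbf{Stage 1 (reduction to a well complemented block basic sequence).} Using hypotheses (i) and (ii), I would replace $\YB$ by a normalized well $C$-complemented block basic sequence $\YB' = (\yy'_m)_{m \in \Mt}$ of the basis $\XB$ that is permutatively equivalent to $\YB$. The $L$-convexity of the lattice structure (i) is equivalent, by Kalton's theorem, to $p$-convexity of $(\XX,\XB)$ for some $p>0$, providing enough lattice machinery --- suprema, absolute values, $p$-convexified square functions --- to run a Kalton--Casazza style disjointification of the basic sequence $\YB$. The anti-Euclidean hypothesis (ii) on $\widehat{\XX}$, accessed via Proposition~\ref{prop:EnvSums} (to pass to the envelope) and Theorem~\ref{thm:AESums} (to rule out Hilbertian obstructions inside the envelope), forbids the uniformly complemented $\ell_2^k$ configuration that is the sole obstacle to disjointification. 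As emphasised in Section~\ref{sect:preliminary}, everything must be done with uniform control of the concavity, $p$-convexity, and $L$-convexity constants.

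\textbf{Stage 2 (applying universality).} Since $\YB'$ is now a normalized well $C$-complemented block basic sequence of $\XB$, hypothesis (iii) produces a map $\pi \colon \Mt \to \Nt$ with $\pi(m) \in \supp(\yy'_m)$ and $\YB' \sim (\xx_{\pi(m)})_{m \in \Mt}$; pairwise disjointness of the supports forces $\pi$ to be one-to-one. Combining with Stage 1 yields $\YB \subsetsim \XB$, so that $\XB$ splits permutatively as $\XB \sim \YB \oplus \XB_0$, where $\XB_0 := (\xx_n)_{n \in \Nt \setminus \pi(\Mt)}$.

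\textbf{Stage 3 (cancellation of squares).} To upgrade the one-sided embedding $\YB \subsetsim \XB$ to full permutative equivalence $\YB \sim \XB$, I would invoke hypothesis (iv) together with the recent cancellation-of-squares theorem of \cite{AlbiacAnsorena2020b} --- the ``very recent solution in the positive'' explicitly cited just before the statement. Squaring the splitting from Stage 2 using $\XB \sim \XB^2$,
\[
\YB \oplus \XB_0 \;\sim\; \XB \;\sim\; \XB^2 \;\sim\; (\YB \oplus \XB_0)^2 \;\sim\; \YB^2 \oplus \XB_0^2,
\]
which combined with Schroder--Bernstein (Theorem~\ref{thm:SBUB}) places the pair $(\XB,\YB)$ in the exact situation covered by the cancellation-of-squares result; that result then collapses the relation to the desired $\XB \sim \YB$.

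The main obstacle is Stage 1: extracting a well complemented block perturbation of an arbitrary normalized unconditional basis $\YB$. The quasi-Banach setting removes the triangle inequality and forces a delicate, quantitatively uniform use of $L$-convexity together with a Banach-envelope reduction; it is precisely the confluence of hypotheses (i) and (ii) that makes the Casazza--Kalton disjointification mechanism extend from the locally convex world of \cite{CasKal1998,CasKal1999} to the $L$-convex quasi-Banach world considered here.
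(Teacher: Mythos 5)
The paper offers no internal proof of this statement: Theorem~\ref{thm:paradigm} is imported verbatim from \cite{AlbiacAnsorena2020b}*{Theorem 3.9}, so the only thing to compare your attempt against is the known architecture of that external proof. Your Stages~1 and~2 follow that architecture correctly: hypotheses (i) and (ii), fed into the quasi-Banach, quantitatively uniform version of the Casazza--Kalton disjointification, replace an arbitrary normalized unconditional basis $\YB$ by a permutatively equivalent well complemented block basic sequence (with the caveat, which you do not flag, that the disjointified blocks naturally live in $\XB^2$ rather than in $\XB$ itself --- this is one of the places where hypothesis (iv) is already consumed, before your Stage~3), and universality (iii) then yields $\YB\subsetsim\XB$, i.e.\ $\XB\sim\YB\oplus\XB_0$ for a subbasis $\XB_0$ of $\XB$.

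The genuine gap is Stage~3. The displayed chain $\YB\oplus\XB_0\sim\XB\sim\XB^2\sim(\YB\oplus\XB_0)^2\sim\YB^2\oplus\XB_0^2$ is correct but carries no information beyond hypothesis (iv): it is literally $(\YB\oplus\XB_0)\sim(\YB\oplus\XB_0)^2$ rewritten. It does not produce the relation $\XB^2\sim\YB^2$ that the canceling-squares theorem of \cite{AlbiacAnsorena2020b} takes as input, nor the reverse embedding $\XB\subsetsim\YB$ that Theorem~\ref{thm:SBUB} would require; asserting that the pair $(\XB,\YB)$ is thereby ``in the exact situation covered by the cancellation-of-squares result'' states the conclusion rather than derives it. Absorbing the leftover $\XB_0$ --- equivalently, upgrading the one-sided relation $\XB\sim\YB\oplus\XB_0$, in which both $\XB$ and $\YB$ span all of $\XX$, to $\XB\sim\YB$ --- is exactly the obstruction that kept this theorem out of reach and whose resolution is the content of \cite{AlbiacAnsorena2020b}; note that such cancellation is false for general direct-sum decompositions, so some specific mechanism must be exhibited. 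To close the argument you must explicitly establish either $\XB\subsetsim\YB$ (then quote Schr\"oder--Bernstein) or $\XB^2\sim\YB^2$ (then quote cancellation of squares); neither follows from what you have written.
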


\subsection{The peaking property} Another technique that has become crucial to determine the uniqueness of unconditional basis in quasi-Banach spaces is the ``large coefficient technique." It was introduced by Kalton in \cite{Kalton1977} to prove the uniqueness of unconditional basis in nonlocally convex Orlicz sequence spaces $\ell_{F}$. Kalton called a complemented basic sequence $(\yy_{n})$ in $\ell_{F}$ \emph{inessential} if
\[
\inf_{n} \sup_{k} |\yy_n^*(\xx_k)| \, |\xx_k^*(\yy_n)|>0,\]
and proved that if $(\yy_{n})$ is inessential then it is equivalent to the canonical basis $(\xx_{k})$ of $\ell_{F}$.

Kalton's ideas were extended to the general framework of quasi-Banach lattices in \cite{KLW1990}. Here we reformulate this property and regard it as a feature of the unconditional basis $(\xx_n)$ of the space instead of the complemented basic sequence $(\yy_{n})$.

\begin{definition}
An unconditional basis $\XB=(\xx_n)_{n\in \Nt}$ of a quasi-Banach space $\XX$ will be said to have the \emph{peaking property} if for every well complemented basic sequence $\YB=(\yy_m)_{m\in \Mt}$ with respect to $\XB$ there is a sequence $(\yy_m^*)_{m\in \Mt}$ of good projecting functionals such that
\begin{equation}\label{eq:gh}
c:=\inf_{m\in \Mt} \sup_{n\in \Nt} |\yy_m^*(\xx_n)| \, |\xx_n^*(\yy_m)|>0.
\end{equation}
In the case when there is a function $\gamma\colon[1,\infty)\to[1,\infty)$ such that $ \gamma(C)\ge 1/c$ whenever $\YB$ is well $C$-complemented, we say that $\XB$ has the \emph{uniform peaking property} (with function $\gamma$).
\end{definition}

The proof of Proposition~\ref{prop:k2one} below relies on the following reduction lemma which will be used as well in Section~\ref{sect:SASense}.
\begin{lemma}[cf.\ \cite{AlbiacAnsorena2020}*{Lemma 3.1}]\label{lem:k2one}
Let $\YB=(\yy_m)_{m\in\Mt}$ be a well complemented basic sequence with respect to an unconditional basis $\XB=(\xx_n)_{n\in \Nt}$ of a quasi-Banach space $\XX$, and let $(\yy_m^*)_{m\in \Mt}$ be a sequence of good projecting functionals for $\YB$. Suppose $\UB=(\uu_m)_{m\in \Mt}$ and $(\uu_m^*)_{m\in \Mt}$ are sequences in $\XX$ and $\XX^{\ast}$ respectively such that:
\begin{enumerate}[label={{(\roman*)}}, leftmargin=*, widest=iii]
\item $|\xx_n^*(\uu_m)| \le D_1 |\xx_n^*(\yy_m)|$ for all $(n,m)\in \Nt \times \Mt$,
\item $|\uu_m^*(\xx_n)| \le D_2 |\yy_m^*(\xx_n)|$ for all $(n,m)\in \Nt \times \Mt$,
and
\item $|\uu_m^*(\uu_m)|\ge {1}/{D_3}$ for all $m\in \Mt$,
\end{enumerate}
for some positive constants $D_1$, $D_2$ and $D_3$.
Then $\UB$ is a well complemented basic sequence equivalent to $\YB$. Quantitatively, if $\YB$ is well $C$-complemented, and $\XB$ is $K$-unconditional, then:
\begin{enumerate}[label={{(\alph*)}}, leftmargin=*, widest=iii]
\item The sequence $\UB$ is well $B$-complemented with good $B$-projecting functionals $(\lambda_m\, \uu_m^{\ast})_{m\in\Mt}$, where $B=C D_1 D_2 D_3 K^2$ and $\lambda_{m}=1/\uu_m^{\ast}(u_{m})$; and
\item The basic sequence $\YB$ $CD_1K$-dominates $\UB$; and
\item the basic sequence $\UB$ $CD_2D_3K$-dominates $\YB$.
\end{enumerate}
\end{lemma}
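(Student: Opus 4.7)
The argument is a bookkeeping exercise upgrading the pointwise hypotheses (i)--(iii) to norm estimates, using the $K$-unconditionality of $\XB$ and the fact that $\Vert P[\YB,\YB^*]\Vert\le C$. I begin by observing the support structure of $\UB$: hypothesis (i) gives $\supp(\uu_m)\subseteq\supp(\yy_m)$, while hypothesis (ii) combined with $\supp(\yy_m^*)\subseteq\supp(\yy_m)$ gives $\supp(\uu_m^*)\subseteq\supp(\yy_m)$. Since $(\supp(\yy_m))_{m\in\Mt}$ is a pairwise disjoint family of finite sets, so are $(\supp(\uu_m))_{m\in\Mt}$ and $(\supp(\uu_m^*))_{m\in\Mt}$. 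By (iii), $\lambda_m:=1/\uu_m^*(\uu_m)$ is well-defined with $|\lambda_m|\le D_3$, and after truncating $\uu_m^*$ to $\supp(\uu_m)$ if necessary (which preserves (ii) and (iii) and every pairing $\uu_m^*(\uu_k)$ used below), the sequence $(\lambda_m\uu_m^*)_{m\in\Mt}$ is the natural candidate for good projecting functionals of $\UB$.

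The technical core of the proof is a global sign change extracted from (ii). Since $(\supp(\yy_m^*))_{m\in\Mt}$ is pairwise disjoint, one may define a single scalar sequence $(\epsilon_n)_{n\in\Nt}$ by $\epsilon_n=\uu_m^*(\xx_n)/\yy_m^*(\xx_n)$ whenever $n\in\supp(\yy_m^*)$ for the unique such $m$, and $\epsilon_n=0$ otherwise. Hypothesis (ii) gives both $|\epsilon_n|\le D_2$ and the identity $\uu_m^*(\xx_n)=\epsilon_n\yy_m^*(\xx_n)$ for every pair $(m,n)$. Consequently, the twisted vector $\tilde f:=\sum_{n\in\Nt}\epsilon_n\xx_n^*(f)\xx_n$ satisfies $\uu_m^*(f)=\yy_m^*(\tilde f)$ for every $m\in\Mt$, while $K$-unconditionality of $\XB$ gives $\Vert\tilde f\Vert\le KD_2\Vert f\Vert$.

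Equipped with these tools, all three conclusions follow from parallel coefficient-by-coefficient comparisons against $P[\YB,\YB^*]$. For (b), the map $T_1(f):=\sum_m\yy_m^*(f)\,\uu_m$ satisfies $T_1(\yy_m)=\uu_m$, and each of its $\XB$-coefficients is bounded, by (i) and the disjointness of supports, by $D_1$ times the corresponding coefficient of $P[\YB,\YB^*](f)$; $K$-unconditionality then yields $\Vert T_1\Vert\le CKD_1$. For (a), the candidate projection $Q(f):=\sum_m\lambda_m\uu_m^*(f)\,\uu_m$ satisfies $Q(\uu_k)=\uu_k$ (since $\uu_m^*(\uu_k)=0$ for $m\ne k$ by disjointness), and upon rewriting $Q(f)=\sum_m\lambda_m\yy_m^*(\tilde f)\,\uu_m$ each $\XB$-coefficient is bounded by $D_1D_3$ times that of $P[\YB,\YB^*](\tilde f)$, whence $\Vert Q(f)\Vert\le KD_1D_3\cdot C\Vert\tilde f\Vert\le CK^2D_1D_2D_3\Vert f\Vert=B\Vert f\Vert$; the same estimate applies to every sub-projection, giving well $B$-complementation with good $B$-projecting functionals $(\lambda_m\uu_m^*)_{m\in\Mt}$. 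For (c), given $f=\sum_mb_m\uu_m\in[\UB]$ the disjoint-support biorthogonality forces $b_m=\lambda_m\uu_m^*(f)=\lambda_m\yy_m^*(\tilde f)$, so $\sum_mb_m\yy_m=\sum_m\lambda_m\yy_m^*(\tilde f)\yy_m$; the analogous coefficient comparison with $P[\YB,\YB^*](\tilde f)$, now involving only the factor $D_3$ and not $D_1$, produces the asserted domination of $\YB$ by $\UB$.

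The delicate point, and the main obstacle, is the global sign construction: it is essential that the scalars $\epsilon_n$ can be chosen independently of $m$, which hinges crucially on the pairwise disjointness of the supports $(\supp(\yy_m^*))_{m\in\Mt}$ coming from the well-complementedness of $\YB$. Without such a single change of variable $f\mapsto\tilde f$, the quantity $\uu_m^*(f)$ could only be bounded by a sum of absolute values of $\XB$-coefficients, which cannot in general be converted into a quasi-norm estimate. All remaining steps amount to careful accounting of supports and repeated applications of $K$-unconditionality to promote pointwise inequalities on $\XB$-coefficients into norm inequalities in $\XX$.
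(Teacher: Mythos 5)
Your argument is correct and is, in substance, the very proof that the paper outsources to \cite{AlbiacAnsorena2020}*{Lemma 3.1}: the support bookkeeping, the single multiplier sequence $(\epsilon_n)_{n\in\Nt}$ extracted from hypothesis (ii) thanks to the disjointness of $(\supp(\yy_m^*))_{m\in\Mt}$, and the coefficientwise comparison of each candidate operator with the sub-projections of $P[\YB,\YB^*]$ are exactly the intended ingredients; your identification of the change of variable $f\mapsto\tilde f$ as the crux is accurate, and parts (a) and (b) come out with precisely the stated constants.

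The one point to tighten is (c). As written, your route pays the factor $K$ twice --- once in $\Vert\tilde f\Vert\le KD_2\Vert f\Vert$ and once more when passing from the coefficientwise bound against $P[\YB,\YB^*](\tilde f)$ to norms --- so it yields $CK^2D_2D_3$ rather than the stated $CD_2D_3K$. To recover the sharper constant, fold the scalars $\lambda_m$ into the multiplier \emph{before} invoking unconditionality: set $h=\sum_{n\in\Nt}\lambda_{m(n)}\,\epsilon_n\,\xx_n^*(f)\,\xx_n$, where $m(n)$ denotes the unique index with $n\in\supp(\yy_{m(n)})$ (the coefficient being $0$ if there is none). Since $|\lambda_{m(n)}\epsilon_n|\le D_2D_3$, a single application of $K$-unconditionality gives $\Vert h\Vert\le KD_2D_3\Vert f\Vert$, while $\yy_m^*(h)=\lambda_m\uu_m^*(f)$ for every $m$, so that $\sum_{m\in\Mt}\lambda_m\uu_m^*(f)\,\yy_m=P[\YB,\YB^*](h)$ has norm at most $C\Vert h\Vert\le CD_2D_3K\Vert f\Vert$. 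This is a cosmetic repair; everything else, including the truncation of $\uu_m^*$ to $\supp(\uu_m)$ needed to make $(\lambda_m\uu_m^*)_{m\in\Mt}$ genuinely good projecting functionals, is handled correctly.
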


\begin{proof}
The proof follows the steps of the proof of \cite{AlbiacAnsorena2020}*{Lemma 3.1}, keeping track of the constants involved.
\end{proof}

\begin{proposition}[cf.\ \cite{AlbiacAnsorena2020b}*{Proposition 3.3}]\label{prop:k2one}
Let $\XX$ be a quasi-Banach space with a normalized unconditional basis $\XB$. If $\XB$ has the peaking property, then $\XB$ is universal for well complemented block basic sequences. Moreover, if $\XB$ is $K$-unconditional and has the uniform peaking property with function $\gamma$, then $\XB$ is uniformly universal for well-complemented block basic sequences with function $C\mapsto K C\gamma(C)$.
\end{proposition}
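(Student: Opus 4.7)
The plan is to use the peaking property to produce a candidate injection $\pi\colon\Mt\to\Nt$, and then invoke Lemma~\ref{lem:k2one} to promote this combinatorial datum into a genuine equivalence $\YB\sim(\xx_{\pi(m)})_{m\in\Mt}$.

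Start with a normalized well $C$-complemented basic sequence $\YB=(\yy_m)_{m\in\Mt}$ together with good projecting functionals $(\yy_m^*)_{m\in\Mt}$ witnessing the peaking property, so that
\[
c:=\inf_{m\in\Mt}\sup_{n\in\Nt}|\yy_m^*(\xx_n)|\,|\xx_n^*(\yy_m)|>0,
\]
with $1/c\le\gamma(C)$ in the uniform regime. For each $m\in\Mt$, choose $\pi(m)\in\Nt$ satisfying $|\yy_m^*(\xx_{\pi(m)})|\,|\xx_{\pi(m)}^*(\yy_m)|\ge c$ (or any fixed positive fraction of $c$ when the supremum is not attained). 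The inclusion $\supp(\yy_m^*)\subseteq\supp(\yy_m)$ forces $\pi(m)\in\supp(\yy_m)$, and the pairwise disjointness of the family $(\supp(\yy_m))_m$ makes $\pi$ automatically injective, so $(\xx_{\pi(m)})_{m\in\Mt}$ is a legitimate rearranged subbasis of $\XB$.

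To verify that $\YB\sim(\xx_{\pi(m)})_{m\in\Mt}$, apply Lemma~\ref{lem:k2one} to the auxiliary system $\uu_m=a_m\,\xx_{\pi(m)}$ and $\uu_m^*=b_m\,\xx_{\pi(m)}^*$, where $a_m=\xx_{\pi(m)}^*(\yy_m)$ and $b_m=\yy_m^*(\xx_{\pi(m)})$. By direct inspection, hypotheses (i) and (ii) of that lemma hold with $D_1=D_2=1$, since the only nonzero coordinate of $\uu_m$ (resp.\ $\uu_m^*$) along $\XB$ sits at $\pi(m)$ and matches by construction the corresponding coordinate of $\yy_m$ (resp.\ $\yy_m^*$); hypothesis (iii) holds with $D_3=1/c$ because $|\uu_m^*(\uu_m)|=|a_m b_m|\ge c$ by the peaking estimate. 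The lemma then yields $\UB\sim\YB$.

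It remains to pass from $\UB$ to the unscaled subbasis $(\xx_{\pi(m)})_{m\in\Mt}$. The uniform bounds $|a_m|\le K$ (from $K$-unconditionality of $\XB$ applied to the normalized vector $\yy_m$) and $|a_m|\ge c/|b_m|\ge c/C$ (from the estimate $\|\yy_m^*\|\le C$ built into the definition of good $C$-projecting functionals) show that $(a_m)_m$ is semi-normalized, so multiplication by the diagonal $(a_m)_m$ realizes an equivalence between $\UB$ and $(\xx_{\pi(m)})_{m\in\Mt}$. The qualitative universality of $\XB$ follows immediately; the uniform refinement reduces to combining the explicit bounds $CD_1K$ and $CD_2D_3K$ coming from Lemma~\ref{lem:k2one} with the cost of the diagonal rescaling and finally substituting $1/c\le\gamma(C)$. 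This careful constant bookkeeping, rather than any conceptual difficulty, is the main obstacle: all of the mathematical content lies in the selection of $\pi$ via the peaking property and the one-shot application of Lemma~\ref{lem:k2one}.
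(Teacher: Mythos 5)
Your argument is correct and is essentially the paper's own proof, which is given there only as a pointer (run the proof of \cite{AlbiacAnsorena2020b}*{Proposition 3.3} with Lemma~\ref{lem:k2one} in hand): you select $\pi(m)\in\supp(\yy_m^*)\subseteq\supp(\yy_m)$ by the peaking estimate, note injectivity from disjointness of supports, and feed the rescaled system into Lemma~\ref{lem:k2one} with $D_1=D_2=1$ and $D_3=1/c$, exactly as intended. The only quibble is quantitative: factoring through $\UB=(a_m\,\xx_{\pi(m)})_{m\in\Mt}$ and then undoing the diagonal multiplies the two equivalence constants and yields a uniform function strictly larger than the stated $C\mapsto KC\gamma(C)$ (roughly an extra factor of $K\max\{C,K\}$); to land on the stated function one should instead estimate the map $\yy_m\mapsto\xx_{\pi(m)}$ in a single step (equivalently, apply Lemma~\ref{lem:k2one} with $\uu_m=\xx_{\pi(m)}$ and $\uu_m^*$ a suitable scalar multiple of $\xx_{\pi(m)}^*$), though this discrepancy is immaterial for every later use of the proposition, which only requires the existence of some uniform function.
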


\begin{proof}
Go through the proof of \cite{AlbiacAnsorena2020b}*{Proposition 3.3} with Lemma~\ref{lem:k2one} in mind, paying attention to the constants involved.
\end{proof}

The following lemma relies on Lemma~\ref{lem:k2one}. Given a family $\XB=(\xx_n)_{n\in \Nt}$ in a quasi-Banach space $\XX$ and $A\subseteq\Nt$ finite, we will use the notation
\[
\Ind_{A}[\XB]=\sum_{n\in A} \xx_n.
\]

\begin{lemma}[cf. \cite{AKL2004}*{Lemma 4.1}]\label{lem:k2two}
Suppose $\XB$ is a normalized $K$-unconditional basis of a quasi-Banach space $\XX$ with dual basis $\XB^*$.
Assume that $\XB$ $D$-dominates the unit vector system of $\ell_1$. If
$B=4C^2DK^2$, then for every normalized $C$-complemented basic sequence $\UB$ in $\XX$ there is a well $B$-complemented basic sequence $\YB=(\yy_m)_{m\in\Mt}$ in $\XX$ such that:
\begin{enumerate}[label=(\roman*), leftmargin=*, widest=iii]
\item $\supp(\yy_m)\subseteq\supp(\uu_m)$ for all $m\in\Mt$;
\item $\YB$ is $E$-equivalent to $\UB$, where $E=2CK\max\{C,D\}$; and
\item $(\Ind_{\supp(\yy_m)}[\XB^*])_{m\in\Mt}$ is a family of good $B$-projecting functionals for $\YB$.
\end{enumerate}
\end{lemma}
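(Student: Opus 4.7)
The strategy is to carve out of each $\supp(\uu_m)$ a pairwise disjoint finite subset $A_m$ on which $\uu_m$ retains most of its mass, take $\yy_m$ to be a rescaled restriction of $\uu_m$ to $A_m$ and $\yy_m^*:=\Ind_{A_m}[\XB^*]$ as the projecting functional, and apply Lemma~\ref{lem:k2one} via an intermediate truncated pair to identify $\YB$ with $\UB$. The three quantitative inputs are $\|\uu_m^*\|\le C$ (from $\Gamma[\UB,\UB^*]\le C$ applied to singletons), $\sum_n|\xx_n^*(\uu_m)|\le D$ (the $\ell_1$-domination applied to $\|\uu_m\|=1$), and the identity $\sum_n\xx_n^*(\uu_m)\,\uu_m^*(\xx_n)=\uu_m^*(\uu_m)=1$.

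For each $n\in\bigcup_m\supp(\uu_m)$ I would assign an index $m(n)$ approximately maximizing $|\uu_m^*(\xx_n)|$ over $m$, and define $A_m\subseteq\{n\in\supp(\uu_m):m(n)=m\}$ via a thresholding that guarantees both $|A_m|<\infty$ (possible because $\sum_n|\xx_n^*(\uu_m)\,\uu_m^*(\xx_n)|\le CD$) and $\sum_{n\in A_m}\xx_n^*(\uu_m)\,\uu_m^*(\xx_n)\ge 1/2$. Then the $(A_m)_m$ are pairwise disjoint subsets of $\supp(\uu_m)$, and the scalar $\sigma_m:=\sum_{n\in A_m}\xx_n^*(\uu_m)$ satisfies $1/(2C)\le|\sigma_m|\le D$ (the lower bound coming from $|\sigma_m|\cdot\|\uu_m^*\|\ge\sum_{n\in A_m}\xx_n^*(\uu_m)\,\uu_m^*(\xx_n)\ge 1/2$, the upper from the triangle inequality).

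Set $\yy_m:=\sigma_m^{-1}\sum_{n\in A_m}\xx_n^*(\uu_m)\,\xx_n$ and $\yy_m^*:=\Ind_{A_m}[\XB^*]$. Then $\supp(\yy_m)=A_m\subseteq\supp(\uu_m)$, $\yy_m^*(\yy_m)=1$, and the $K$-unconditionality of $\XB$ together with $\|\yy_m\|\le|\sigma_m|^{-1}K\|\uu_m\|\le 2CK$ yield the well $B$-complementation of $\YB$ with good projecting functionals $\yy_m^*$. For the equivalence $\YB\sim_E\UB$, first apply Lemma~\ref{lem:k2one} to the auxiliary truncated pair $\tilde\uu_m:=\sigma_m\yy_m=\sum_{n\in A_m}\xx_n^*(\uu_m)\,\xx_n$ with $\tilde\uu_m^*:=\sum_{n\in A_m}\uu_m^*(\xx_n)\,\xx_n^*$, which satisfies the three pointwise conditions with $(D_1,D_2,D_3)=(D,C,2)$; this yields $\tilde\UB\sim\YB$ with explicit constants. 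Then establish $\UB\sim\tilde\UB$ by combining the $K$-unconditional coordinate restriction (sending $\uu_m$ to $\tilde\uu_m$ up to controlled cross-terms) with the reverse bound derived from the $C$-complementation of $\UB$ and the lower bound $\uu_m^*(\tilde\uu_m)\ge 1/2$.

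The principal obstacle is the constant arithmetic: tracking $|\sigma_m|^{-1}\le 2C$, $K$ (appearing twice from the two unconditional estimates), $C$ (from complementation), and $D$ (from $\ell_1$-domination) through Lemma~\ref{lem:k2one} and the reduction step to verify that the resulting bounds are precisely $B=4C^2DK^2$ and $E=2CK\max\{C,D\}$. This is delicate but routine, following the template of \cite{AKL2004}*{Lemma 4.1}.
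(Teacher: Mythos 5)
Your overall architecture --- pairwise disjoint finite sets $A_m\subseteq\supp(\uu_m)$, restricted and rescaled vectors, indicator projecting functionals, and Lemma~\ref{lem:k2one} to transfer the equivalence --- is the right template, and it is how the proof the paper points to (\cite{AlbiacAnsorena2020b}*{Lemma 3.6}, after \cite{AKL2004}*{Lemma 4.1}) is organized. The genuine gap is the disjointification step, which is the heart of the lemma and which your argument does not establish. You claim that assigning each $n$ to an index $m(n)$ near-maximizing $|\uu_m^*(\xx_n)|$ and then thresholding yields disjoint finite $A_m\subseteq\supp(\uu_m)$ with $\sum_{n\in A_m}\xx_n^*(\uu_m)\,\uu_m^*(\xx_n)\ge 1/2$, and you state that your only quantitative inputs are $\Vert\uu_m^*\Vert\le C$, $\sum_n|\xx_n^*(\uu_m)|\le D$, and $\uu_m^*(\uu_m)=1$. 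Those inputs are provably insufficient. In $\XX=\ell_1^N$ with its unit vector basis ($K=D=1$), take the normalized Walsh system $\uu_m=N^{-1}\sum_n h_{mn}\xx_n$ with $\uu_m^*=\sum_n h_{mn}\xx_n^*$, where $(h_{mn})$ is a Hadamard matrix: all three of your inputs hold with constant $1$, yet $\xx_n^*(\uu_m)\,\uu_m^*(\xx_n)=1/N$ for every $(m,n)$, so any pairwise disjoint family $A_m\subseteq\{1,\dots,N\}$ forces $\min_m\sum_{n\in A_m}\xx_n^*(\uu_m)\,\uu_m^*(\xx_n)\le 1/N$. Your tie-breaking quantity $|\uu_m^*(\xx_n)|$ is identically $1$ here, so no refinement of the same device helps. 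This does not contradict the lemma, because the Walsh system is only $C$-complemented (and unconditional) with $C$ growing in $N$; the moral is that the disjointification must be carried by the bound $\Gamma[\UB,\UB^*]\le C$ for \emph{arbitrary} subsets $M\subseteq\Mt$ --- equivalently, the uniform unconditionality and complementation of $\UB$ --- which is exactly what the cited proofs use and what you never bring into this step.

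A second, more local error: the bound $|\sigma_m|\ge 1/(2C)$ for $\sigma_m=\sum_{n\in A_m}\xx_n^*(\uu_m)$ does not follow from the inequality you write, since $\bigl|\sum_{n\in A_m}\xx_n^*(\uu_m)\,\uu_m^*(\xx_n)\bigr|\le\Vert\uu_m^*\Vert\sum_{n\in A_m}|\xx_n^*(\uu_m)|$ only controls the sum of \emph{moduli} of the coefficients; the signed sum $\sigma_m$ can vanish by cancellation, leaving $\yy_m=\sigma_m^{-1}\sum_{n\in A_m}\xx_n^*(\uu_m)\,\xx_n$ undefined or unbounded. This part is repairable by splitting $A_m$ according to the arguments of $\xx_n^*(\uu_m)$ and of $\uu_m^*(\xx_n)$ and keeping the largest piece (at the cost of an absolute factor), but it must be done explicitly; together with the first issue it means the stated constants $B=4C^2DK^2$ and $E=2CK\max\{C,D\}$ cannot be certified from the argument as written.
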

\begin{proof}
Just go over the lines of the proof of \cite{AlbiacAnsorena2020b}*{Lemma 3.6} paying attention to the constants involved.
\end{proof}

\subsection{Strongly absolute bases}
Strong absoluteness was identified by Kalton, Ler\'anoz, and Wojtaszczyk in \cite{KLW1990} as the crucial differentiating feature of unconditional bases in quasi-Banach spaces in their investigation of the uniqueness of unconditional bases. One could say that strongly absolute bases are ``purely nonlocally convex'' bases, in the sense that if a quasi-Banach space $\XX$ has a strongly absolute basis, then its unit ball is far from being a convex set and so $\XX$ is far from being a Banach space. Although the term strongly absolute for a basis was coined in \cite{KLW1990}, here we work with a slightly different but equivalent definition.

\begin{definition} An unconditional basis $\XB=(\xx_n)_{n \in \Nt}$ of a quasi-Banach space $\XX$ is \textit{strongly absolute} if for every constant $R>0$ there is a constant $C>0$ such that
\begin{equation}\label{eq:sb}
\sum_{n \in \Nt} |\xx_n^*(f)|\, \Vert \xx_n\Vert \le \max\left\{ C \sup_{n \in \Nt} |\xx_n^*(f)|\,\Vert \xx_n\Vert , \frac{\Vert f \Vert}{R}\right\}, \quad f\in\XX.
\end{equation}
If $\alpha\colon(0,\infty)\to (0,\infty)$ is such that \eqref{eq:sb} holds with $C= \alpha(R)$ for every $0<R<\infty$,
we say that $\XB$ is strongly absolute with function $\alpha$. \end{definition}

Note that if we rescale a strongly absolute basis we obtain a strongly absolute basis with the same function.
Note also that a normalized unconditional basis $\XB=(\xx_n)_{n \in \Nt}$ is strongly absolute with function $\alpha$ if and only if
\[
\Vert f\Vert < R \sum_{n \in \Nt} |\xx_n^*(f)|
\quad \Longrightarrow \quad
\sum_{n \in \Nt} |\xx_n^*(f)| \le \alpha(R) \max_{n\in\Nt} |\xx_n^*(f)|.
\]
If $\XB=(\xx_n)_{n \in \Nt}$ is a strongly absolute basis with function $\alpha$ of a quasi-Banach space $\XX$, the normalized basis $(\xx_n/\Vert \xx_n\Vert)_{n\in\Nt}$ $D$-dominates the unit vector basis of $\ell_1(\Nt)$, where
\[
D=D(\alpha)=\inf_{R>0} \max\left\{ \alpha(R),\frac{1}{R}\right\}.
\]
Roughly speaking a normalized (or semi-normalized) unconditional basis is strongly absolute if and only if it dominates the unit vector basis of $\ell_1$, and whenever the $\ell_1$-norm and the quasi-norm of a vector are comparable then so are the $\ell_{\infty}$-norm and the $\ell_1$-norm of its coordinates.

Adding combinatorial arguments to the methods from \cite{KLW1990} enabled Wojtaszczyk to prove the following criterion for spaces with a strongly absolute basis. Needless to say, he could not count on the Casazza-Kalton paradigm since it had not been discovered yet.

\begin{theorem}[See \cite{Woj1997}*{Theorem 2.12}]\label{WojtCriterion}
Let $\XX$ be a natural quasi-Banach space with a strongly absolute unconditional basis $(\xx_{n})_{n\in \Nt}$. Assume also that $\XX$ is isomorphic to some of its cartesian powers $\XX^{s}$, $s\ge 2$. Then all normalized unconditional bases of $\XX$ are permutatively equivalent.
\end{theorem}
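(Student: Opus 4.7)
My plan is to apply the Schr\"oder-Bernstein principle for unconditional bases (Theorem \ref{thm:SBUB}) to an arbitrary normalized unconditional basis $\YB=(\yy_m)_{m\in\Mt}$ of $\XX$, establishing both $\YB\subsetsim\XB$ and $\XB\subsetsim\YB$. The key machinery combines the large-coefficient technique of \cite{Kalton1977, KLW1990} with Hall's marriage lemma (Theorem \ref{thm:HKL}); the self-duplication hypothesis $\XX\cong\XX^s$ intervenes precisely to guarantee Hall's combinatorial condition.

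Since $\XB$ is strongly absolute it dominates the unit vector basis of $\ell_1(\Nt)$, so Lemma \ref{lem:k2two} allows me to replace $\YB$, up to uniform equivalence, by a well-complemented basic sequence $\UB=(\uu_m)_{m\in\Mt}$ whose $\XB$-supports are pairwise disjoint finite sets with good projecting functionals $\uu_m^*=\Ind_{\supp(\uu_m)}[\XB^*]$. Choosing $R$ large relative to the modulus of concavity and the uniform lower bound on $\Vert\uu_m\Vert$, the strong absoluteness inequality \eqref{eq:sb} applied to $\uu_m$ forces its right-hand maximum to be attained at the first summand, yielding a dominant index $\sigma(m)\in\supp(\uu_m)$ and uniform constants $c,c'>0$ with
\[
|\xx_{\sigma(m)}^*(\uu_m)|\,\Vert\xx_{\sigma(m)}\Vert\ge c,\qquad |\uu_m^*(\xx_{\sigma(m)})|\,|\xx_{\sigma(m)}^*(\uu_m)|\ge c';
\]
hypothesis (iii) of Lemma \ref{lem:k2one} is thus verified. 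A standard averaging-over-signs argument exploiting the $p$-convexity of the lattice structure on $\XX$ (available because $\XX$ is natural, hence $L$-convex) together with the uniform unconditional constants bounds the multiplicity: $|\sigma^{-1}(\{n\})|\le M$ for every $n\in\Nt$, with $M$ depending only on $c'$, $p$, $M_p(\XX)$, and the unconditional constant of $\XB$.

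Iterating $\XX\cong\XX^s$ gives $\XX\cong\XX^{s^k}$ for every $k$; I then choose $\widetilde M=s^k\ge M$ and, via this isomorphism, identify $\XX$ with a space carrying a normalized unconditional basis $\widetilde\XB$ indexed by $\Nt\times\{1,\ldots,\widetilde M\}$ in which each $\xx_n$ has been duplicated $\widetilde M$ times (each copy equivalent, under the identification, to $\xx_n$). Setting $\widetilde\Nt_m:=\{\sigma(m)\}\times\{1,\ldots,\widetilde M\}$, the multiplicity bound yields $|F|\le M|\sigma(F)|\le|\bigcup_{m\in F}\widetilde\Nt_m|$ for every finite $F\subseteq\Mt$, so Hall's marriage lemma produces an injection $\pi\colon\Mt\to\Nt\times\{1,\ldots,\widetilde M\}$ with $\pi(m)\in\widetilde\Nt_m$. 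Lemma \ref{lem:k2one} converts this into the inclusion $\YB\subsetsim\widetilde\XB$. A final absorption step, in which the finite-to-one character of the first-coordinate projection of $\pi$ together with the $\ell_1$-domination inherent in strong absoluteness is used to re-index the selected subbasis of $\widetilde\XB$ as an honest subbasis of $\XB$ up to equivalence, yields $\YB\subsetsim\XB$.

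For the reverse inclusion, I observe that strong absoluteness is hereditary: since $\YB$ is permutatively equivalent to a subbasis of $\XB$ and $\XB$ is strongly absolute, so is $\YB$, with controlled constants. Because the self-duplication hypothesis is a property of the space $\XX$, not of a specific basis, it applies verbatim with $\YB$ in place of $\XB$. Repeating the argument with the roles of $\XB$ and $\YB$ interchanged yields $\XB\subsetsim\YB$, and Theorem \ref{thm:SBUB} concludes $\XB\sim\YB$ up to permutation. The main obstacle I foresee is the absorption step at the end of the direct inclusion: passing from the natural Hall output $\YB\subsetsim\widetilde\XB$ back to the clean statement $\YB\subsetsim\XB$ requires carefully exploiting strong absoluteness to dispose of the $\widetilde M$-fold duplication without destroying the equivalence constants. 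Every other ingredient is quantitative bookkeeping of the combinatorial machinery surveyed in the preceding subsections.
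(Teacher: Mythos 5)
The paper does not prove this theorem: it is quoted from \cite{Woj1997}*{Theorem 2.12}, so your proposal can only be measured against Wojtaszczyk's argument and the machinery surveyed in Section~\ref{sect:preliminary}. Your skeleton is the right one --- Schr\"oder--Bernstein, peak indices extracted from strong absoluteness, a multiplicity bound for the peak map obtained by averaging over signs (which is where $L$-convexity, i.e.\ naturality, enters), and Hall's lemma padded by the isomorphism $\XX\cong\XX^{s^k}$. But two steps do not hold up. First, your opening reduction proves too much. If Lemma~\ref{lem:k2two} really replaced an \emph{arbitrary} normalized unconditional basis $\YB$ by an equivalent sequence with pairwise disjoint finite $\XB$-supports, then the peak map $\sigma$ would automatically be injective (since $\sigma(m)\in\supp(\uu_m)$ and the supports are disjoint); Hall's lemma, the multiplicity bound and the hypothesis $\XX\cong\XX^{s}$ would never be needed, and one would conclude that \emph{every} natural quasi-Banach space with a strongly absolute basis has a UTAP basis --- contradicting the paper's own remark that this is open for $H_p^{\nn}(\TT)$ when that space is not isomorphic to its square. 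In the paper Lemma~\ref{lem:k2two} is only ever applied to sequences that are already well complemented; for an arbitrary basis the disjointification is precisely what is \emph{not} available, and the peak/multiplicity analysis has to be carried out on overlapping supports. As written, your argument is internally inconsistent: either the reduction is illegitimate, or everything after it is redundant.

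Second, the ``absorption step'' is not a formality, and you do not carry it out. Hall's lemma gives an injection $\pi\colon\Mt\to\Nt\times\{1,\dots,\widetilde M\}$, i.e.\ $\YB\subsetsim\XB^{\widetilde M}$, and the selected subbasis of $\XB^{\widetilde M}$ will in general use several copies of the same $\xx_n$. No appeal to ``$\ell_1$-domination'' can trade repeated copies of $\xx_n$ for distinct, unused elements of $\XB$; if such an absorption were possible one would again obtain $\YB\subsetsim\XB$ without ever using $\XX\cong\XX^{s}$. In the actual proof the duplication is never absorbed at this stage: one keeps the duplicated bases $\XB^{s^k}$, which via the isomorphisms $\XX\cong\XX^{s^k}$ are themselves (equivalent to) normalized unconditional bases of $\XX$, and the Schr\"oder--Bernstein principle is applied to a suitable pair of these, not to $\XB$ and $\YB$ directly. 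From what you actually establish --- $\YB\subsetsim\XB^{\widetilde M}$ and $\XB\subsetsim\YB^{\widetilde M'}$ --- the conclusion $\XB\sim\YB$ does not follow by Theorem~\ref{thm:SBUB}. The step you yourself flag as ``the main obstacle'' is exactly the missing proof.
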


 For further reference, we record an elementary lemma.

\begin{lemma}\label{lem:SADom}
Let $\XB$ and $\YB$ be normalized unconditional bases of quasi-Banach spaces $\XX$ and $\YY$ respectively. Suppose that $\XB$ is strongly absolute with function $\alpha$ and that $\YB$ $D$-dominates $\XB$. Then $\YB$ is strongly absolute with function $D\alpha$.
\end{lemma}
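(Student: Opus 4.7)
The proof is essentially a direct transfer of the strong absoluteness inequality from $\XB$ to $\YB$ via the domination operator, so I expect no conceptual difficulty — the only subtlety is choosing the right parameter in the definition of strong absoluteness.

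The plan is as follows. Since $\YB$ $D$-dominates $\XB$, there is a linear operator $T\colon [\YB]\to\XX$ with $T(\yy_n)=\xx_n$ for all $n\in\Nt$ and $\Vert T\Vert\le D$. Pick any $g\in\YY$ and set $f=T(g)\in\XX$. By linearity and the defining property $T(\yy_n)=\xx_n$, the coefficient transforms of $g$ with respect to $\YB$ and of $f$ with respect to $\XB$ coincide, i.e.\ $\xx_n^*(f)=\yy_n^*(g)$ for every $n\in\Nt$. Furthermore, $\Vert f\Vert_\XX\le D\Vert g\Vert_\YY$.

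Now fix an arbitrary $R>0$ and apply the hypothesis that $\XB$ is strongly absolute with function $\alpha$ to $f$, using the parameter $DR$ (rather than $R$):
\[
\sum_{n\in\Nt}|\xx_n^*(f)|\le \max\left\{\alpha(DR)\sup_{n\in\Nt}|\xx_n^*(f)|,\; \frac{\Vert f\Vert_\XX}{DR}\right\}.
\]
Substituting $\yy_n^*(g)$ for $\xx_n^*(f)$ and using the bound $\Vert f\Vert_\XX/(DR)\le \Vert g\Vert_\YY/R$, we obtain
\[
\sum_{n\in\Nt}|\yy_n^*(g)|\le \max\left\{\alpha(DR)\sup_{n\in\Nt}|\yy_n^*(g)|,\; \frac{\Vert g\Vert_\YY}{R}\right\},
\]
which is exactly the assertion that $\YB$ is strongly absolute with function $R\mapsto \alpha(DR)$ (this being the meaning of ``$D\alpha$'' in the statement, as the natural derivation forces this reparametrization rather than a multiplicative factor $D$ in front of $\alpha$).

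The only mildly delicate point is the choice of parameter: applying the definition with the naive parameter $R$ yields $\max\{\alpha(R)\sup,D\Vert g\Vert/R\}$, where the second term carries an unwanted factor $D$; rescaling the parameter to $DR$ moves this factor from the norm term into the argument of $\alpha$, producing the desired clean inequality. Since both bases are normalized, the weights $\Vert\xx_n\Vert=\Vert\yy_n\Vert=1$ play no role, and nothing else need be checked.
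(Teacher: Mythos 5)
Your proof is correct and is precisely the elementary argument the paper has in mind (the paper records this lemma without proof): transport $g$ to $f=T(g)$, observe $\xx_n^*(f)=\yy_n^*(g)$ and $\Vert f\Vert\le D\Vert g\Vert$, and apply the strong absoluteness of $\XB$ with parameter $DR$. Your reading of ``$D\alpha$'' as the reparametrized function $R\mapsto\alpha(DR)$ is the right one — the argument does not yield the pointwise product $R\mapsto D\,\alpha(R)$, and since $\alpha$ may be taken non-decreasing neither form dominates the other in general — but this makes no difference to any application in the paper, where all that matters is that the resulting strongly absolute function depends only on $D$ and $\alpha$.
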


The following proposition guarantees the strongly absoluteness of infinite direct sums of strongly absolute bases. Some applications in Section~\ref{sect:examples} will relay on it as we shall see.

\begin{proposition}\label{NewStronglyAbsBases}
Let $\LL$ be a sequence space on a set $\Jt$ with strongly absolute basis. For each $j\in\Jt$ let $\XB_j$ be a $K$-unconditional basis of a quasi-Banach space with modulus of concavity at most $\kappa$, where $\kappa$ and $K$ are constants independent of $j$. Suppose that there is $\alpha$ such that $\XB_j$ is strongly absolute with function $\alpha$ for all $j\in\Jt$. Then $\XB:=(\bigoplus_{j\in\Nt} \XB_j)_\LL$ is a strongly absolute unconditional basis of $\XX:=(\bigoplus_{j\in\Nt} \XX_j)_\LL$. Moreover, if $\beta$ is a strongly absolute function for the unit vector system of $\LL$, then the map
\[
R\mapsto \gamma(R):=\beta(R D(\alpha)) \alpha( R \beta(R D(\alpha))), \quad 0<R<\infty,
\]
is a strongly absolute function for $\XB$.
\end{proposition}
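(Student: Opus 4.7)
The plan is to apply the strong absoluteness of each inner basis $\XB_j$ and of the unit vector system $\EE_\Jt$ of $\LL$ in sequence, choosing the two parameters so that their ``error terms'' collapse into the single term $\|f\|/R$. Fix $f=(f_j)_{j\in\Jt}\in\XX$ and set $a_{j,n}:=|\xx_{j,n}^*(f_j)|\,\|\xx_{j,n}\|$, $S_j:=\sum_{n\in\Nt_j}a_{j,n}$, $b_j:=\|f_j\|_{\XX_j}$, $A:=\sup_{j,n}a_{j,n}$, and $T:=\sum_{j,n}a_{j,n}$. The goal is to show $T\le\max\{\gamma(R)A,\|f\|/R\}$.

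First, I would apply the strong absoluteness of $\XB_j$ with parameter $R_1>0$, to be chosen below, obtaining $S_j\le\max\{\alpha(R_1)A,b_j/R_1\}$ (using $A_j:=\sup_n a_{j,n}\le A$). In parallel, the $\ell_1$-domination consequence of strong absoluteness recalled just after the definition gives $S_j\le D(\alpha)b_j$. Hence $(S_j)_{j\in\Jt}$ is pointwise dominated by $D(\alpha)(b_j)_{j\in\Jt}\in\LL$, and since $\LL$ is a quasi-Banach lattice, $\|(S_j)_{j\in\Jt}\|_\LL\le D(\alpha)\|f\|$. Applying the strong absoluteness of $\EE_\Jt$ in $\LL$ to $(S_j)_{j\in\Jt}$ with parameter $R_2:=RD(\alpha)$ then yields
\[
T=\sum_{j\in\Jt}S_j\le\max\bigl\{\beta(RD(\alpha))\,\sup_j S_j,\;\|f\|/R\bigr\}.
\]

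It remains to bound $\sup_j S_j$. Since $(\ee_j)_{j\in\Jt}$ is a $1$-unconditional normalized basis of $\LL$, each row norm satisfies $b_j\le\|(b_k)_{k\in\Jt}\|_\LL=\|f\|$, and combined with the bound $S_j\le\max\{\alpha(R_1)A,b_j/R_1\}$ from the first step this gives $\sup_j S_j\le\max\{\alpha(R_1)A,\|f\|/R_1\}$. Plugging this into the previous display and choosing $R_1:=R\beta(RD(\alpha))$, the identity $\beta(RD(\alpha))/R_1=1/R$ makes the middle error term collapse into $\|f\|/R$, leaving
\[
T\le\max\{\beta(RD(\alpha))\,\alpha(R\beta(RD(\alpha)))\,A,\;\|f\|/R\}=\max\{\gamma(R)A,\;\|f\|/R\},
\]
which is the strong absoluteness inequality with the claimed function $\gamma$. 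The main obstacle, around which the whole argument is organized, is the control of $\sup_j S_j$: the inner strong absoluteness bounds $S_j$ only in terms of $A_j\le A$ and the row norm $b_j$, and nothing in the hypotheses directly dominates $b_j$ by $A$; the way out is that $1$-unconditionality of the unit vector basis of $\LL$ automatically yields $b_j\le\|f\|$, which is precisely the quantity absorbed by the outer strong absoluteness of $\EE_\Jt$.
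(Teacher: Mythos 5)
Your proof is correct and follows essentially the same route as the paper's: both arguments use the $\ell_1$-domination $S_j\le D(\alpha)\,b_j$ together with lattice monotonicity to get $\Vert (S_j)_j\Vert_{\LL}\le D(\alpha)\Vert f\Vert$, then apply the strong absoluteness of the unit vector system of $\LL$ with parameter $R\,D(\alpha)$ and that of the inner bases with parameter $R\,\beta(R D(\alpha))$, and both exploit $b_j\le\Vert f\Vert$ coming from the normalized $1$-unconditionality of $\EE_{\Jt}$. The only differences are cosmetic: you work with the max-form inequality and optimize the inner parameter $R_1$ at the end, whereas the paper argues in the equivalent implication form and applies the inner estimate only at an index realizing $\sup_j S_j$.
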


\begin{proof} Without lost of generality we assume that $\XB_j$ is normalized for all $j\in\Jt$ so that $\XB$ is normalized too. For each $j\in\Jt$ let $\Fou_j$ be the coefficient transform with respect to $\XB_j$. Let $\beta$ be a strongly absolute function for $\LL$.
Pick $f=(f_j)_{j\in \Jt} \in\XX$ and $R\in(0,\infty)$ such that
\[
\Vert f\Vert=\left\Vert (\Vert f_j\Vert)_{j\in\Jt}\right\Vert_\LL\le R
\left\Vert (\Vert \Fou_j(f_j)\Vert_1)_{j\in\Jt}\right\Vert_{1}.
\]
Since, by unconditionality,
\[
\left\Vert (\Vert \Fou_j(f_j)\Vert_1)_{j\in\Jt}\right\Vert_\LL\le D(\alpha)
\left\Vert (\Vert f_j\Vert)_{j\in\Jt}\right\Vert_{\LL},
\]
we obtain
\[
\Vert \Fou(f)\Vert_1=\left\Vert (\Vert \Fou(f_j)\Vert_1)_{j\in\Jt}\right\Vert_{1} \le \beta(R D(\alpha)) \sup_{j\in \Jt} \Vert \Fou_j(f_j)\Vert_1.
\]
Let $k\in\Jt$ be such that $\Vert \Fou_k(f_k)\Vert_1=\sup_{j\in \Jt} \Vert \Fou_j(f_j)\Vert_1$. By unconditionality,
\[
\Vert f_k\Vert\le \Vert f\Vert_\LL\le R \beta(R D(\alpha)) \sup_{j\in \Jt} \Vert \Fou_j(f_j)\Vert_1= R \beta(R D(\alpha)) \Vert \Fou_k(f_k)\Vert_1,
\]
so that,
\[
\sup_{j\in \Jt} \Vert \Fou_j(f_j)\Vert_1 =\Vert \Fou_k(f_k)\Vert_1\le \alpha( R \beta(R D(\alpha))) \Vert \Fou_k(f_k) \Vert_\infty.
\]
Since $\Vert \Fou(f)\Vert_\infty=\sup_j \Vert \Fou_j(f_j) \Vert_\infty$, we obtain
\[
\Vert \Fou(f)\Vert_1\le \gamma(R) \Vert \Fou(f)\Vert_\infty.\qedhere
\]
\end{proof}

\section{Infinite $\LL$-sums of quasi-Banach spaces, where $\LL$ is a sequence space with a strongly absolute basis}\label{sect:SASense}

\noindent Our first theorem in this section uses the previous ingredients and the language introduced in Section~\ref{sect:preliminary} to provide, in particular, an extension of \cite{AKL2004}*{Theorem 4.5}, which established the uniqueness of unconditional basis UTAP in the spaces $\ell_{p}(\ell_{1})$ for $0<p<1$.

\begin{theorem}\label{thm:UWCBSSums}
Let $\LL$ be a sequence space on a set $\Jt$ with a strongly absolute basis. For each $j\in\Jt$, let $\XB_j$ be a normalized $K$-unconditional basis of quasi-Banach space $\XX_j$ with modulus of concavity at most $\kappa$, where $K$ and $\kappa$ are independent of $j$. Suppose that there is a function $\eta\colon[1,\infty)\to[1,\infty)$ such that $\XB_j$ is uniformly universal for well complemented block basic sequences with function $\eta$ for all $j\in\Jt$. Then the unconditional basis $(\bigoplus_{j\in \Jt} \XB_j)_{\LL}$ of the infinite direct sum $\XX:=(\bigoplus_{j\in \Jt} \XX_j)_{\LL}$ is uniformly universal for well complemented block basic sequences.
\end{theorem}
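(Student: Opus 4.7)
The plan is to reduce the problem to a row-by-row application of the hypothesis, using the strongly absolute basis of $\LL$ to pick, for each $m\in\Mt$, a dominant row $j(m)\in\Jt$ in which a suitably normalized slice of $\yy_m$ sits as a well-complemented basic sequence of $\XB_{j(m)}$. Let $\YB=(\yy_m)_{m\in\Mt}$ be a normalized well $C$-complemented basic sequence of $\XB$ and $\YB^*=(\yy_m^*)_{m\in\Mt}$ a choice of good $C$-projecting functionals. Write the index set of $\XB$ as $\Nt=\bigsqcup_{j\in\Jt}\Nt_j$ and let $L_j\colon\XX_j\to\XX$ be the canonical embedding. Decompose $\yy_m=\sum_{j\in\Jt}L_j(\yy_{m,j})$ and, via the isometric identification $\XX^*=(\bigoplus_{j\in\Jt}\XX_j^*)_{\LL^*}$, represent $\yy_m^*$ by $(\yy_{m,j}^*)_{j\in\Jt}$; both $\yy_{m,j}\in\XX_j$ and $\yy_{m,j}^*\in\XX_j^*$ are supported on $A_{m,j}:=\supp(\yy_m)\cap\Nt_j$, and the $A_{m,j}$ are pairwise disjoint in $m$ for each fixed $j$. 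Set $a_j=\Vert\yy_{m,j}\Vert$, $a_j^*=\Vert\yy_{m,j}^*\Vert$, and $\lambda_{m,j}=\yy_{m,j}^*(\yy_{m,j})$. Biorthogonality $\yy_m^*(\yy_m)=1$ reads $\sum_j\lambda_{m,j}=1$, so $\sum_j|\lambda_{m,j}|\ge 1$, while $|\lambda_{m,j}|\le a_ja_j^*$. Pairing the unit vector $\ee_j\in\LL$ against $(a_j^*)_j\in\LL^*$ yields $a_j^*\le\Vert(a_j^*)_j\Vert_{\LL^*}\le C$ for every $j$.

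Since $d_j:=|\lambda_{m,j}|\le Ca_j$, the $1$-unconditionality of the basis of $\LL$ gives $\Vert(d_j)_j\Vert_\LL\le C$. Applying strong absoluteness (with function $\beta$) to $(d_j)_j$ at level $R=2C$, the inequality $\sum_j d_j\ge 1>C/R$ forces $\max_j d_j\ge 1/\beta(2C)=:\delta$. Choose $j(m)\in\Jt$ attaining this maximum; then $|\lambda_{m,j(m)}|\ge\delta$ and $a_{j(m)}\ge\delta/C$. Partition $\Mt=\bigsqcup_{j\in\Jt}\Mt_j$ with $\Mt_j:=\{m\colon j(m)=j\}$, and for $m\in\Mt_j$ set
\[
\tilde{\zz}_{m,j}:=\frac{\yy_{m,j}}{a_j},\qquad\tilde{\zz}_{m,j}^*:=\frac{a_j\,\yy_{m,j}^*}{\lambda_{m,j}}.
\]

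Since the $A_{m,j}$ with $m\in\Mt_j$ are pairwise disjoint finite subsets of $\Nt_j$, $(\tilde{\zz}_{m,j})_{m\in\Mt_j}$ is a normalized $K$-unconditional basic sequence of $\XX_j$ biorthogonal to $(\tilde{\zz}_{m,j}^*)_{m\in\Mt_j}$. For $S\subseteq\Mt_j$ and $g\in\XX_j$,
\[
\sum_{m\in S}\tilde{\zz}_{m,j}^*(g)\,\tilde{\zz}_{m,j}=\sum_{m\in S}\lambda_{m,j}^{-1}\,\yy_{m,j}^*(g)\,\yy_{m,j};
\]
disjointness of supports and $K$-unconditionality (applied once to rescale by $|\lambda_{m,j}^{-1}|\le 1/\delta$ and once to restrict to $S$), together with the bound $\Vert\sum_m\yy_{m,j}^*(g)\,\yy_{m,j}\Vert\le C\Vert g\Vert$ (from $\Vert P[\YB,\YB^*]\Vert\le C$ and the contractivity of $L_j$ and of the row-$j$ projection), yield $\Vert\sum_{m\in S}\tilde{\zz}_{m,j}^*(g)\tilde{\zz}_{m,j}\Vert\le(K^3C/\delta)\Vert g\Vert$. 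Thus $(\tilde{\zz}_{m,j})_{m\in\Mt_j}$ is well $B$-complemented in $\XX_j$ with $B:=K^3C/\delta$ depending only on $C$. Uniform universality of $\XB_j$ with function $\eta$ now furnishes an injection $\pi_j\colon\Mt_j\to\Nt_j$ with $\pi_j(m)\in A_{m,j}$ and $(\tilde{\zz}_{m,j})_{m\in\Mt_j}\sim_{\eta(B)}(\xx_{j,\pi_j(m)})_{m\in\Mt_j}$ in $\XX_j$. Define $\pi(m):=(j(m),\pi_{j(m)}(m))\in\Nt$; then $\pi$ is injective and $\pi(m)\in\supp(\yy_m)$, and Lemma~\ref{lem:equivalenceDirectSums} transfers the row-wise equivalences to $\UB:=(L_{j(m)}(\tilde{\zz}_{m,j(m)}))_{m\in\Mt}\sim_{\eta(B)}(\xx_{\pi(m)})_{m\in\Mt}$ in $\XX$. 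Letting $\uu_m=L_{j(m)}(\tilde{\zz}_{m,j(m)})$ and $\uu_m^*\in\XX^*$ be the functional $\uu_m^*(f):=\tilde{\zz}_{m,j(m)}^*(f_{j(m)})$, the inequalities $|\xx_n^*(\uu_m)|\le(C/\delta)|\xx_n^*(\yy_m)|$, $|\uu_m^*(\xx_n)|\le(1/\delta)|\yy_m^*(\xx_n)|$, and $|\uu_m^*(\uu_m)|=1$ verify hypotheses (i)--(iii) of Lemma~\ref{lem:k2one} with $D_1=C/\delta$, $D_2=1/\delta$, $D_3=1$, giving $\YB\sim\UB$ with a constant depending only on $C$, $K$, $\delta$. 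Composing the two equivalences produces a constant $\eta'(C)$ depending only on $C$, $K$, $\beta$, $\eta$.

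The main obstacle is the dominant-row extraction. Choosing $j(m)=\arg\max_j a_j$ alone would not suffice, since the corresponding $a_{j(m)}^*$ and $\lambda_{m,j(m)}$ could still be arbitrarily small, breaking both the projection estimate needed for well-complementation in the row and the hypotheses of Lemma~\ref{lem:k2one}. The right move is to apply strong absoluteness directly to the diagonal-value sequence $(|\lambda_{m,j}|)_j$; the $\LL$-norm control on this sequence is obtained via the $\ell_\infty$-bound $a_j^*\le C$, itself a direct dual consequence of the strongly absolute basis of $\LL$.
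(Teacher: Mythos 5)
Your proposal is correct and follows essentially the same route as the paper's proof: both extract the dominant row $j(m)$ by applying the strong absoluteness of $\LL$'s unit vector system to the diagonal-value sequence $(\yy_{m,j}^*(\yy_{m,j}))_{j\in\Jt}$ (bounded in $\LL$ by $C$ and below in $\ell_1$ by $1$), then use Lemma~\ref{lem:k2one} to pass from $\YB$ to the normalized one-row truncations, verify that these are uniformly well complemented in each $\XX_j$, invoke the row-wise hypothesis, and glue with Lemma~\ref{lem:equivalenceDirectSums}. The only differences are cosmetic (order of the steps and explicit verification of the row projection bound versus the paper's composition with $L_j$ and $T_j$).
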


\begin{proof}
We isometrically identify $\XX^*$ with $\VV:=(\oplus_{j\in \Jt} \XX_j^*)_{\LL^*}$ via the natural dual pairing $\langle\cdot,\cdot\rangle\colon\VV\times\XX\to\FF$. For each $j\in \Jt$, let $L_j\colon\XX_j\to \XX$ and $L_j'\colon\XX_j^*\to \VV$ be the natural `inclusion' maps,
and let $T_j\colon \XX\to \XX_j$ be the natural projection. Set $\XB_j=(\xx_{j,n})_{n\in\Nt_j}$. Let $C\in[1,\infty)$ and let $\YB=(\yy_m)_{m\in\Mt}$ be a normalized well $C$-complemented basis sequence with good $C$-projecting functionals $\YB^*=(\yy_m^*)_{m\in\Mt}$. Let $(\vv_m)_{m\in\Mt}$ the corresponding sequence in $\VV$ via the above described dual mapping. Set $\yy_m=(\yy_{j,m})_{j\in\Jt}$ and $\vv_m=(\yy^*_{j,m})_{j\in\Jt}$ for each $m\in\Mt$. Set also
\[
f_m=(\yy^*_{j,m}(\yy_{j,m}))_{\in\Jt}, \quad m\in\Mt.
\]
For $m\in\Mt$, we have
\[
1= \yy_m^*(\yy_m)=| \yy_m^*(\yy_m)|
=\left|\langle \vv_m,\yy_m\rangle\right|=\left|\sum_{j\in\Jt} \yy^*_{j,m}(\yy_{j,m}) \right| \le \Vert f_m\Vert_1
\]
and
\begin{align*}
\Vert f_m\Vert_\LL
&\le \left\Vert (\Vert \yy^*_{j,m}\Vert \, \Vert \yy_{j,m} \Vert)_{j\in\Jt}\right\Vert_\LL\\
&\le \sup_{j\in\Jt} \Vert \yy^*_{j,m}\Vert \left\Vert (\Vert \yy_{j,m} \Vert)_{j\in\Jt}\right\Vert_\LL\\
& =\Vert \vv_m\Vert \, \Vert \yy_m\Vert\\
&\le \Vert \yy_m^*\Vert \, \Vert \yy_m\Vert\\
&\le C.
\end{align*}
Hence, if the unit vector system of $\LL$ is strongly absolute with function $\alpha$, we have
\[
\Vert f_m\Vert_\infty \ge \frac{1}{\alpha(C)}, \quad m\in\Mt.
\]
Therefore, there is a map $\phi\colon\Mt \to \Jt$ such that
\begin{equation*}
\left|\left\langle L'_{\phi(m)}(\yy^*_{\phi(m),m}), L_{\phi(m)}(\yy_{\phi(m),m})\right\rangle\right|
=|\yy^*_{\phi(m),m}(\yy_{\phi(m),m})|\ge \frac{1}{\alpha(C)}
\end{equation*}
for all $m\in\Mt$. By Lemma~\ref{lem:k2one}, the sequence $(L_{\phi(m)}(\yy_{\phi(m),m}) )_{m\in\Mt}$ is $E$-equivalent to $\YB$, where $E=\alpha(C) CK$. We have, in particular,
\[
\frac{1}{E} \Vert \yy_{\phi(m),m}\Vert \le 1, \quad m\in\Mt.
\]
Set $B=E^2CK^2$ and $E'=ECK$.
Applying again Lemma~\ref{lem:k2one} gives that
\[
\UB=(\uu_m)_{m\in\Mt}:=(L_{\phi(m)}(\yy_{\phi(m),m})/\Vert \yy_{\phi(m),m}\Vert )_{m\in\Mt}
\]
is a normalized well $B$-complemented basic sequence $E'$-equivalent to $\YB$. For each $j\in\Jt$ put
\[
\Mt_j=\{ m\in\Mt \colon \phi(m)=j\}.
\]
Composing the projections from $\XX$ onto $\XX$ associated to the well complemented basic sequence $(\uu_m)_{m\in \Mt_j}$ with the maps $L_j$ and $T_j$ we obtain that
\[
\YB_j=(\yy_{j,m}/\Vert \yy_{j,m}\Vert )_{m\in\Mt_j}
\]
is well $B$-complemented in $\XX_j$. By assumption, for each $j\in\Jt$ there is a map $\nu_j\colon\Mt_j\to \Nt_j$ such that $\nu_j(m)\in\supp(\yy_{j,m})$ for all $m\in\Mt_j$ and $\YB_j$ is $\eta(B)$-equivalent to $(\xx_{j,\nu(m)})_{m\in\Mt_j}$. By Lemma~\ref{lem:equivalenceDirectSums}, $\UB$ is $\eta(B)$-equivalent to $(L_{\phi(m)}(\xx_{\phi(m),\nu(m)}))_{m\in\Mt}$.
\end{proof}

We are now ready to obtain the main theoretical result of the section.

\begin{theorem}\label{thm:UTAPLSB}
Let $\LL$ be an $L$-convex sequence space on a countable set $\Jt$. Let $(\XB_j)_{j\in\Jt}$ be an $L$-convex family of normalized unconditional bases of quasi-Banach spaces $(\XX_j)_{j\in\Jt}$. Suppose that:
\begin{enumerate}[label={{(\roman*)}}, leftmargin=*, widest=iii]
\item $\XB_j$ is uniformly universal for well-complemented block basic sequences with function $\eta$ for all $j\in\Jt$;
\item the family of Banach envelopes $\left(\widehat{\XX_j}\right)_{j\in J}$ is anti-Euclidean;
\item The unit vector system of $\LL$ is strongly absolute; and
\item\label{UTAPLSB:SQ} one the the following conditions holds:
\begin{enumerate}[label={{(\alph*)}}, leftmargin=*]
\item\label{UTAPLSB:SQ:1} there is constant $C$ such that $\XB_j^2\sim_C\XB_j$ for all $j\in\Jt$.
\item\label{UTAPLSB:SQ:2} $\LL^2$ is lattice isomorphic to $\LL$, and $\XB_j=\YB$ for all $j\in\Jt$ and some unconditional basis $\YB$.
\item\label{UTAPLSB:SQ:3} $\LL$ is subsymmetric, and there is a constant $C$ such that, for each $j\in\Jt$, $\XB_j\subsetsim_C\XB_k$ for infinitely many values of $k\in\Jt$.
\end{enumerate}
\end{enumerate}
Then $\XX=(\bigoplus_{j\in \Jt} \XX_j)_\LL$ has a UTAP unconditional basis.
\end{theorem}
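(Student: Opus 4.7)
The plan is to verify the four hypotheses of the extended Casazza--Kalton paradigm (Theorem~\ref{thm:paradigm}) for the normalized unconditional basis $\XB := \bigl(\bigoplus_{j\in\Jt}\XB_j\bigr)_\LL$ of $\XX$. Three of the four hypotheses follow from results already assembled in Section~\ref{sect:preliminary}, while the anti-Euclidean hypothesis on the Banach envelope requires a short chain of identifications which I expect to be the main (though modest) obstacle.

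For the routine parts I would argue as follows. First, the $L$-convexity of the lattice structure induced by $\XB$ on $\XX$ is precisely what the quantitative version of Kalton's theorem recalled in Section~\ref{sect:preliminary} guarantees: the $L$-convexity of $\LL$ together with the uniform $L$-convexity of the family $(\XX_j)_{j\in\Jt}$ yields a common $p>0$ and constant $C$ with $M_p(\LL)\le C$ and $M_p(\XX_j)\le C$ for every $j\in\Jt$, whence $M_p(\XX)\le C^2$. Second, the universality of $\XB$ for well complemented block basic sequences is exactly Theorem~\ref{thm:UWCBSSums} applied with hypotheses (i) and (iii). Third, the equivalence $\XB\sim\XB^2$ up to permutation is provided by Lemma~\ref{lem:squares}, whose three cases~(a)--(c) match verbatim the three subcases of hypothesis~(iv).

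The remaining hypothesis, and the one whose verification requires extra care, is that $\widehat{\XX}$ be anti-Euclidean. The plan here is to reduce matters to Theorem~\ref{thm:AESums}. The key observation will be that when the unit vector basis of a minimal sequence space $\LL$ is strongly absolute, the Banach envelope $\widehat{\LL}$ is isomorphic to $\ell_1(\Jt)$: strong absoluteness gives a bounded identity map $\LL\to\ell_1(\Jt)$, and in the envelope---which by \cite{AABW2019}*{Proposition~10.9} is again a minimal sequence space, this time Banach with normalized $1$-unconditional unit vector basis---the triangle inequality yields the reverse pointwise domination on $c_{00}(\Jt)$. With $\widehat{\LL}\simeq \ell_1(\Jt)$ in hand, Proposition~\ref{prop:EnvSums} identifies $\widehat{\XX}$ isometrically with $\bigl(\bigoplus_{j\in\Jt}\widehat{\XX_j}\bigr)_{\ell_1(\Jt)}$, and Theorem~\ref{thm:AESums} applied to the anti-Euclidean family $(\widehat{\XX_j})_{j\in\Jt}$ finishes the verification. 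With all four hypotheses in place, Theorem~\ref{thm:paradigm} delivers the UTAP conclusion.
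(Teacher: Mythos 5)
Your proposal is correct and follows essentially the same route as the paper: verify the four hypotheses of Theorem~\ref{thm:paradigm} using the $L$-convexity of the $\LL$-sum from Section~\ref{sect:preliminary}, Theorem~\ref{thm:UWCBSSums} for universality, Lemma~\ref{lem:squares} for $\XB\sim\XB^2$, and the identification $\widehat{\LL}\simeq\ell_1(\Jt)$ combined with Proposition~\ref{prop:EnvSums} and Theorem~\ref{thm:AESums} for the anti-Euclidean condition. Your explicit justification that strong absoluteness forces $\widehat{\LL}\simeq\ell_1(\Jt)$ supplies a detail the paper leaves implicit, and is sound.
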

\begin{proof} Since the unit vector system of $\LL$ is strongly absolute, its Banach envelope is lattice isomorphic to $\ell_1$. By Proposition~\ref{prop:EnvSums}, the Banach envelope of $\XX$ is isomorphic to
$
\left(\bigoplus_{j\in \Jt} \widehat{\XX_j}\right)_{\ell_1},
$
which is anti-Euclidean by Theorem~\ref{thm:AESums}. By Theorem~\ref{thm:UWCBSSums}, $\XB=(\bigoplus_{j\in\Jt} \XB_j)_\LL$ is (uniformly) universal for well complemented block basic sequences. By Lemma~\ref{lem:squares}, $\XB^2\sim\XB$. Applying Theorem~\ref{thm:paradigm} puts an end to the proof.
\end{proof}

\begin{remark}\label{rmk:peakingSA}
A variation of the argument used to prove Theorem~\ref{thm:UWCBSSums} gives that, if we replace the hypothesis ``$\XB_j$ is uniformly universal for well-complemented block basic sequences with function $\eta$ for all $j\in\Jt$'' with
``there is a function $\gamma\colon[1,\infty)\to[1,\infty)$ such that $\XB_j$ has the uniform peaking property with function $\gamma$'', we obtain that the unconditional basis $(\bigoplus_{j\in \Jt} \XB_j)_{\LL}$ of $(\bigoplus_{j\in \Jt} \XX_j)_{\LL}$ has the uniform peaking property. In particular, any strongly absolute unconditional basis $\XB=(\xx_n)_{n\in\Nt}$ of any quasi-Banach space $\XX$ has the uniform peaking property. Let us see a more direct proof of this result. For any $f\in\XX$ and $f^*\in\XX^*$ we have
\[
|f^*(f)|=\left| \sum_{n\in\Nt} f^*(\xx_n)\, \xx_n^*(f) \right|\le \sum_{n\in\Nt}| f^*(\xx_n)|\, |\xx_n^*(f)|,
\]
and, if $\XB$ is $K$-unconditional and normalized,
\[
\left\Vert \sum_{n\in\Nt} | f^*(\xx_n) \, \xx_n^*(f) \, \xx_n \right\Vert
\le
K \sup_{n\in\Nt} |f^*(\xx_n)| \, \Vert f \Vert
\le CK \Vert f^*\Vert \Vert f \Vert.
\]
Hence, if $\Vert f^*\Vert \, \Vert f^*\Vert \le C |f^*(f)|$,
\[
|f^*(f)| \le \alpha(CK) \sup_{n\in\Nt} | f^*(\xx_n)|\, |\xx_n^*(f)|,
\]
where $\alpha$ is the strongly absolute function of $\XB$. We infer that $\XB$ has the uniform peaking property with function $C\mapsto \alpha(CK)$.
\end{remark}

\section{Infinite $\ell_1$-sums of spaces with strongly absolute bases}\label{sect:l1Sense}
\noindent
In this section, we generalize the main result from \cite{AlbiacLeranoz2011} and solve an explicit problem raised ten years ago in \cite{AlbiacLeranoz2011}*{Remark 3.6}. With hindsight, and in light of Theorem~\ref{WojtCriterion}, it also sets right \cite{AlbiacLeranoz2011}*{Corollary 3.4}, whose validity seemed to rely on a wrong set of hypotheses.

\begin{theorem}\label{thm:l1sumK21}
For each $j\in\Jt$, let $\XB_j$ be a normalized $K$-unconditional basis of a quasi-Banach space $\XX_j$ with modulus of concavity at most $\kappa$. Suppose that there is $\alpha$ such that $\XB_j$ is strongly absolute with function $\alpha$ for all $j\in\Jt$. Then the unconditional basis $\XB=(\bigoplus_{j\in \Jt} \XB_j)_{\ell_1}$ of $(\bigoplus_{j\in \Jt} \XX_j)_{\ell_1}$ is uniformly universal for well-complemented block basic sequences.
\end{theorem}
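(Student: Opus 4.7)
The plan is to adapt the strategy of Theorem~\ref{thm:UWCBSSums} to the present setting. The essential modification is that the unit vector basis of $\ell_1$ is \emph{not} strongly absolute, so the centerpiece of the original argument---concentrating the mass of $\yy_m$ on a single coordinate $\phi(m)\in\Jt$ via strong absoluteness of $\LL$---must be replaced by an argument that exploits the strong absoluteness of each $\XB_j$ \emph{inside} its own space $\XX_j$.

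Let $\YB=(\yy_m)_{m\in\Mt}$ be a normalized well $C$-complemented basic sequence with good $C$-projecting functionals $\YB^*=(\yy_m^*)_{m\in\Mt}$. Via the natural identifications $\XX=(\bigoplus_j \XX_j)_{\ell_1}$ and $\XX^*\cong(\bigoplus_j \XX_j^*)_{\ell_\infty}$, write $\yy_m=(\yy_{j,m})_{j\in\Jt}$ and $\yy_m^*=(\yy_{j,m}^*)_{j\in\Jt}$, so that $\sum_j\Vert\yy_{j,m}\Vert=1$, $\sup_j\Vert\yy_{j,m}^*\Vert\le C$, and $\sum_j\yy_{j,m}^*(\yy_{j,m})=1$. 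A weighted-average argument applied to the quantities $r_j:=|\yy_{j,m}^*(\yy_{j,m})|/\Vert\yy_{j,m}\Vert$, with weights $\Vert\yy_{j,m}\Vert$ summing to one, produces an index $\phi(m)\in\Jt$ with $r_{\phi(m)}\ge 1$; this translates into the $C$-alignment
\[
\Vert\yy_{\phi(m),m}^*\Vert\,\Vert\yy_{\phi(m),m}\Vert \le C\,|\yy_{\phi(m),m}^*(\yy_{\phi(m),m})|
\]
in $\XX_{\phi(m)}$. Invoking now the peaking derivation of Remark~\ref{rmk:peakingSA} inside $\XX_{\phi(m)}$---powered by the strong absoluteness of $\XB_{\phi(m)}$---we obtain $\nu(m)\in\supp(\yy_{\phi(m),m})$ with
\[
|\yy_{\phi(m),m}^*(\xx_{\phi(m),\nu(m)})|\cdot|\xx_{\phi(m),\nu(m)}^*(\yy_{\phi(m),m})| \ge \frac{|\yy_{\phi(m),m}^*(\yy_{\phi(m),m})|}{\alpha(CK)}.
\]
Setting $\pi(m)=(\phi(m),\nu(m))\in\supp(\yy_m)$, an application of Lemma~\ref{lem:k2one} with $\uu_m$ and $\uu_m^*$ taken as suitable nonzero scalar multiples of $\xx_{\pi(m)}$ and $\xx_{\pi(m)}^*$ then shows that $\YB$ is $C'$-equivalent to the rearranged subbasis $(\xx_{\pi(m)})_{m\in\Mt}$ of $\XB$.

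The hardest part is to convert the qualitative lower bound $|\yy_{\phi(m),m}^*(\yy_{\phi(m),m})|\ge\Vert\yy_{\phi(m),m}\Vert$ produced by the pigeonhole into a \emph{uniform} quantitative lower bound on the peaking product, with a constant depending only on $C$, $K$, $\kappa$, and $\alpha$. The key input here is the observation that the well $C$-complementedness hypothesis, coupled with the strong absoluteness of each $\XB_j$ with common function $\alpha$, severely restricts how many summands $\XX_j$ a single $\yy_m$ can spread its mass over: the cardinality of $\{j\in\Jt:\yy_{j,m}^*(\yy_{j,m})\ne 0\}$ is bounded by a constant $N_0=N_0(C,K,\kappa,\alpha)$. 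This cardinality bound delivers in turn a uniform lower bound on $\Vert\yy_{\phi(m),m}\Vert$ and hence on the peaking product; the constants supplied by Lemma~\ref{lem:k2one} then depend only on $C$, $K$, $\kappa$, and $\alpha$, yielding the uniform universality asserted in the theorem.
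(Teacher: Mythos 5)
The argument breaks down at the step you yourself flag as the hardest one. Your claimed cardinality bound is false: strong absoluteness of each $\XB_j$ places no restriction whatsoever on how many summands a single $\yy_m$ can spread over. Take $\yy_m=\frac{1}{N}\sum_{j=1}^{N}\xs_{j,1}$ with $\yy_m^*=\sum_{j=1}^{N}\xs_{j,1}^*$ (supports of distinct $\yy_m$ over disjoint sets of columns). This is a normalized, well complemented basic sequence with uniformly bounded constants, yet $|\{j:\yy_{j,m}^*(\yy_{j,m})\neq 0\}|=N$ is unbounded, $\Vert\yy_{\phi(m),m}\Vert=1/N$, and the peaking product $|\yy_m^*(\xs_{j,1})|\cdot|\xs_{j,1}^*(\yy_m)|=1/N\to 0$. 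So condition (iii) of Lemma~\ref{lem:k2one} fails with any uniform constant, and the single-coordinate matching mechanism cannot produce the equivalence for such ``flat'' vectors. (They \emph{are} equivalent to a rearranged subbasis --- both span isometric copies of $\ell_1$ --- but that equivalence is not witnessed by a peak coordinate.) The weighted-average step producing $\phi(m)$ with $|\yy_{\phi(m),m}^*(\yy_{\phi(m),m})|\ge\Vert\yy_{\phi(m),m}\Vert$ and the subsequent use of Remark~\ref{rmk:peakingSA} inside $\XX_{\phi(m)}$ are fine as far as they go; the problem is solely that $\Vert\yy_{\phi(m),m}\Vert$ has no uniform lower bound, and nothing in the hypotheses supplies one.

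The paper handles exactly this obstruction with a dichotomy plus a combinatorial argument, and some version of that seems unavoidable. After first normalizing the projecting functionals to be of the form $\Ind_{\supp(\yy_m)}[\XB^*]$ via Lemma~\ref{lem:k2two}, it splits $\Mt$ into the indices $m$ for which some coefficient $|\xx_{j,n}^*(\yy_{j,m})|$ exceeds a threshold (these are dispatched by Lemma~\ref{lem:k2one}, essentially as in your argument) and the ``flat'' indices where all coefficients are small. For the flat part it proves (Lemma~\ref{lem:k21aux}\ref{lem:k21aux:b}) that a violation of Hall's condition on the sets $J_m=\{j:\yy_{j,m}\neq 0\}$ would force a large coefficient --- this is where strong absoluteness of the $\XB_j$ actually enters --- so Hall's Marriage Lemma (Theorem~\ref{thm:HKL}) yields an \emph{injective} map $m\mapsto\phi(m)$ into distinct columns, and the equivalence with $(\xs_{\pi(m)})_m$ then follows from the global $\ell_1$ structure of the sum rather than from a peaking estimate. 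If you want to salvage your write-up, you need to add this second branch; the one-size-fits-all peak-coordinate argument cannot cover the flat vectors.
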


The proof of Theorem~\ref{thm:l1sumK21} will be shortened considerably after taking care of the following lemma.
\begin{lemma}\label{lem:k21aux}
For each $j\in\Jt$, let $\XB_j=(\xx_{j,n})_{n\in \Nt_j}$ be a normalized $K$-unconditional basis of a quasi-Banach space $\XX_j$ with modulus of concavity at most $\kappa$ and let $L_j\colon\XX_j\to\XX:=(\oplus_{j\in \Jt} \XX_j)_{\ell_1}$ be the canonical embedding. For $(j,n)\in\Nt:=\cup_{j\in\Jt} \{j\}\times\Nt_j$ denote $\xs_{j,n}=L_j(\xx_{j,n})$, so that
$\XB:=(\oplus_{j\in \Jt} \XB_j)_{\ell_1}=(\xs_{j,n})_{(j,n)\in\Nt}$.
For each $j\in\Jt$, let $\XB_j^{\ast}=(\xx_{j,n}^*)_{n\in\Nt_j}$ denote the dual basis of $\XB_j$, and let
$\XB^*=(\xs_{j,n}^{\ast})_{(j,n)\in\Nt}$ be the dual basis of $\XB$. Suppose $\YB=(\yy_m)_{m\in\Mt}$ is a normalized well $C$-complemented normalized basic sequence with respect to the normalized unconditional basis $\XB$ of $\XX$ for which $
(\yy_m^*)_{m\in\Mt} = (\Ind_{\supp(\yy_m)}[\XB^*]) _{m\in \Mt}
$
is a family of good $C$-projecting functionals. Put $\yy_m=(\yy_{j,m})_{j\in \Jt}$ and set
\[
J_m=\{j\in \Jt \colon \yy_{j,m}\not=0\}.
\]
Then:
\begin{enumerate}[label=(\alph*), leftmargin=*]
\item\label{lem:k21aux:z}
If $\XB_j$ $D$-dominates the unit vector system of $\ell_1$ for all $j\in\Jt$, $(\yy_m)_{m\in\Mt}$ $D$-dominates the unit vector system of $\ell_1(\Mt)$.

\item\label{lem:k21aux:a}
If $|M|\le |\cup_{m\in M} J_m|$ for every $M\subseteq \Mt$ finite, there is a one-to-one map $\pi\colon \Mt \to \Nt$ such that
the rearranged subbasis $(\xs_{\pi(m)})_{m\in \Mt}$ of $\XB$ is isometrically equivalent to the unit vector system of $\ell_1$ and $C$-dominates $\YB$.

\item\label{lem:k21aux:b}
If
\begin{enumerate}[label=(\roman*), leftmargin=*, widest=iii]
\item $\XB_j$ is strongly absolute with function $\alpha$ for every $j\in\Jt$, and
\item there is $M\subseteq \Mt$ finite and nonempty such that $|\cup_{m\in M} J_m| < |M|$,
\end{enumerate}
for every $R\in(0,\infty)$ we have
\begin{equation*}
\Delta:=\sup\left\{ |\xx_{j,n}^*(\yy_{j,m})| \colon m \in \Mt,\, j\in \Jt, \, n\in \Nt_j\right\} \ge \frac{R-C}{R\, \alpha(R)}.
\end{equation*}
\end{enumerate}
\end{lemma}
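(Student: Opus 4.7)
The plan is to handle the three parts in turn, using in each case the key identity
\[
1=\yy_m^*(\yy_m)=\sum_{(j,n)\in\supp(\yy_m)}\xx_{j,n}^*(\yy_{j,m})
\]
which follows from the indicator form $\yy_m^*=\Ind_{\supp(\yy_m)}[\XB^*]$ of the good projecting functionals. This gives the row-summed bound $1\le\sum_{j\in J_m}\|\Fou_j(\yy_{j,m})\|_1$ for each $m\in\Mt$.

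For part \ref{lem:k21aux:z}, I would combine this lower bound with the row-by-row disjointness of the supports $(\supp_{\XB_j}(\yy_{j,m}))_{m\in\Mt}$ and the hypothesis that each $\XB_j$ $D$-dominates the unit vector basis of $\ell_1$ to estimate, for arbitrary scalars $(a_m)_{m\in\Mt}$,
\[
\textstyle\sum_m|a_m|\le\sum_j\|\Fou_j(\sum_m a_m\yy_{j,m})\|_1\le D\sum_j\|\sum_m a_m\yy_{j,m}\|_{\XX_j}=D\|\sum_m a_m\yy_m\|.
\]
For part \ref{lem:k21aux:a}, the hypothesis is precisely Hall's condition, so Theorem~\ref{thm:HKL} provides an injection $\phi\colon\Mt\to\Jt$ with $\phi(m)\in J_m$. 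Picking $n_m\in\supp_{\XB_{\phi(m)}}(\yy_{\phi(m),m})$ and setting $\pi(m)=(\phi(m),n_m)$, the injectivity of $\phi$ places the $\xs_{\pi(m)}$ in pairwise distinct $\ell_1$-summands, so $(\xs_{\pi(m)})_{m\in\Mt}$ is isometrically equivalent to the unit vector basis of $\ell_1(\Mt)$. Disjointness of the $\supp(\yy_m)$'s gives $\yy_m^*(\xs_{\pi(m')})=\delta_{m,m'}$, so the $C$-bounded projection $P[\YB,\YB^*]$ maps $\sum_m a_m\xs_{\pi(m)}$ to $\sum_m a_m\yy_m$, yielding the $C$-domination.

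For part \ref{lem:k21aux:b}, I would first reduce to the case $|I|=|M|-1$ by replacing $M$ with a minimal witness (so that Hall holds for each $M'\subsetneq M$). Writing $g=\sum_{m\in M}\yy_m=(g_j)_{j\in I}$ with $g_j=\sum_{m\in M,\,j\in J_m}\yy_{j,m}$, a disjoint sum in $\XB_j$, the definition of $\Delta$ gives $\|\Fou_j(g_j)\|_\infty\le\Delta$, so the strong absoluteness of $\XB_j$ applied to $g_j$ with parameter $R$ yields $\|\Fou_j(g_j)\|_1\le\alpha(R)\Delta+\|g_j\|/R$. Summing over $j\in I$ and invoking $\sum_j\|\Fou_j(g_j)\|_1\ge|M|$ from the identity above produces the central inequality
\[
|M|\le(|M|-1)\alpha(R)\Delta+\|g\|/R.
\]

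The main obstacle is then upper-bounding $\|g\|$ by roughly $C|M|$, which I plan to do by exhibiting a preimage of $g$ under $P=P[\YB,\YB^*]$. Applying Hall to $M\setminus\{m_0\}$ (whose hypothesis holds by minimality) gives an injection $\phi\colon M\setminus\{m_0\}\to I$ with $\phi(m)\in J_m$; picking $n_m\in\supp_{\XB_{\phi(m)}}(\yy_{\phi(m),m})$ and a single extra cell $(j_0,n_0)\in\supp(\yy_{m_0})$, set
\[
h=\sum_{m\in M\setminus\{m_0\}}\xs_{\phi(m),n_m}+\xs_{j_0,n_0}.
\]
A direct computation using the pairwise disjointness of the $\supp(\yy_{m'})$'s over all $m'\in\Mt$ gives $\yy_m^*(h)=1$ for $m\in M$ and $\yy_{m'}^*(h)=0$ otherwise, so $P(h)=g$ and $\|g\|\le C\|h\|$. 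Because the components of $h$ in rows $j\neq j_0$ are single normalized basis vectors, and only the row indexed by $j_0$ carries the two distinct basis vectors $\xs_{j_0,n_{m_1}}+\xs_{j_0,n_0}$ (where $\phi(m_1)=j_0$), one obtains $\|h\|\le|M|$ up to a modulus-of-concavity factor in that single collision row. Substituting $\|g\|\le C|M|$ into the central inequality and using $|M|/(|M|-1)\ge 1$ yields $\alpha(R)\Delta\ge(R-C)/R$, hence the desired bound (trivially true for $R\le C$). The principal technical delicacy is tracking the modulus of concavity coming from the collision row so that the final constant reads exactly $(R-C)/(R\alpha(R))$ in the quasi-Banach regime.
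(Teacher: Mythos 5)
Your arguments for parts (a) and (b) are correct and essentially identical to the paper's: the same telescoping of $\yy_m^*=\Ind_{\supp(\yy_m)}[\XB^*]$ through the rows for (a), and Hall's Marriage Lemma plus the fact that $P[\YB,\YB^*]$ sends $\xs_{\pi(m)}$ to $\yy_m$ for (b). Your central inequality in part (c), namely $|M|\le(|M|-1)\alpha(R)\Delta+\|g\|/R$, is also correctly derived.

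The gap is exactly where you flag it, and it does not go away. Since $|I|=|M|-1<|M|$, any preimage $h$ of $g=\sum_{m\in M}\yy_m$ built from one basis vector per element of $M$ must place two basis vectors in the same row $j_0$, and in a quasi-Banach $\XX_{j_0}$ the best generic bound is $\Vert \xx_{j_0,n_{m_1}}+\xx_{j_0,n_0}\Vert\le 2\kappa$, giving $\Vert h\Vert\le |M|-2+2\kappa$ and hence $\alpha(R)\Delta\ge\bigl(|M|(R-C)-2C(\kappa-1)\bigr)/\bigl(R(|M|-1)\bigr)$. This is strictly weaker than the claimed $(R-C)/R$ precisely when $R-C<2C(\kappa-1)$, i.e.\ on the nonempty range $C<R<C(2\kappa-1)$ whenever $\kappa>1$; so the stated constant is not obtained in the quasi-Banach regime the lemma is written for. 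The repair is to not sum over all of the minimal witness $\Mt_0$: set $M=\Mt_0\setminus\{m_0\}$, which \emph{does} satisfy Hall's condition by minimality, apply your own part (b) to get $\Vert\sum_{m\in M}\yy_m\Vert\le C|M|$ with no concavity loss, and run the strong-absoluteness estimate on $\sum_{m\in M}\yy_{j,m}$ over $j\in J:=\cup_{m\in M}J_m$. The lower bound becomes $|M|$ and the count becomes $|J|\le|\cup_{m\in\Mt_0}J_m|\le|\Mt_0|-1=|M|$, so one gets $|M|\le|M|\alpha(R)\Delta+C|M|/R$, i.e.\ $1\le\alpha(R)\Delta+C/R$ exactly. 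This is the paper's route; note it uses part (b) only as a norm estimate and never needs an explicit preimage under $P$.
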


\begin{proof} \ref{lem:k21aux:z}
The basis $\XB$ $D$-dominates the unit vector system of $\ell_1(\Nt)$. That is,
\[
\sum_{(j,n)\in\Nt}| \xs^*_{j,n}(f) |\le D \Vert f \Vert, \quad f\in\XX.
\]
For $(a_m)_{m\in \Mt}\in c_{00}(\Mt)$, write $f=\sum_{m\in\Mt} a_m\, \yy_m$. Then,
\begin{align*}
\sum_{m\in\Mt} |a_m|
&=\sum_{m\in\Mt} \left|\yy_{m}^*\left( f \right)\right| \\
&=\sum_{m\in\Mt}\sum_{(j,n)\in\supp(\yy_{m})} \left| \xs_{j,n}^*(f)\right| \\
&=\sum_{(j,n)\in\Nt} \left| \xs_{j,n}^*(f)\right| \\
&\le D\Vert f\Vert.\qedhere
\end{align*}

\ref{lem:k21aux:a}
By Theorem~\ref{thm:HKL}, there is a one-to-one map $\phi\colon \Mt \to \Jt$ such that $\yy_{\phi(m) ,m}\not=0$ for all $m\in\Mt$. Thus, there is $\nu\colon\Mt \to \cup_{j\in \Jt} \Nt_j$ such that $\nu(m)\in \Nt_{\phi(m)}$ for all $m\in\Mt$ and such that $\xs^*_{\phi(m),\nu(m)}(\yy_m)\not=0$. Define $\pi\colon\Mt\to\Nt$ by $\pi(m)=(\phi(m),\nu(m))$ for all $m\in\Mt$. Let $T_j$ be the canonical projection of $\XX$ onto $\XX_j$. Since, given $m\in\Mt$, $T_j(\xs_{\pi(m)})\not=0$ for at most one $j\in\Jt$, for every $(a_m)_{m\in\Mt}\in c_{00}(\Mt)$ we have
\begin{align*}\label{eq:k21aux2}
\left\Vert\sum_{m\in \Mt} a_m\, \xs_{\pi(m)} \right\Vert
&=\sum_{j\in\Jt} \left\Vert\sum_{m\in \Mt} a_m\, T_j( \xs_{\pi(m)}) \right\Vert \\
&=\sum_{j\in\Jt} \sum_{m\in \Mt} |a_m| \, \Vert T_j( \xs_{\pi(m)}) \Vert \\
&= \sum_{m\in \Mt} |a_m| \sum_{j\in\Jt} \Vert T_j( \xs_{\pi(m)}) \Vert \\
&= \sum_{m\in \Mt} |a_m| \, \Vert \xx_{\pi(m)}) \Vert \\
&= \sum_{m\in \Mt} |a_m|.
\end{align*}

Let $P=P[\YB,\YB^*]$ be the projection defined in \eqref{eq:projCUBS}. If $(j,n)\in\sup(\yy_m)$ for some $m\in\Nt$,
\[
P(\xs_{j,n})=\sum_{m'\in\Mt} \yy_{m'}^*(\xs_{j,n}) \, \yy_{m'}=\sum_{m'\in\Mt} \delta_{m,m'} \, \yy_{m'}=\yy_m.
\]
Hence, if $f=\sum_{m\in\Mt} a_m\, \yy_m$,
\begin{equation*}
\left\Vert f \right\Vert
=\left\Vert P\left( \sum_{m\in \Mt} a_m\, \xs_{\pi(m)}\right)\right\Vert
\le C \left\Vert \sum_{m\in \Mt} a_m\, \xs_{\pi(m)} \right\Vert.\qedhere
\end{equation*}

\ref{lem:k21aux:b}
Note that $\XB_j$ $D(\alpha)$-dominates the unit vector system of $\ell_1(\Nt_j)$ for all $j\in\Jt$.
Pick $\Mt_0$ minimal with $|\cup_{m\in \Mt_0} J_m|<|\Mt_0|<\infty$. Since $J_m\not=\emptyset$ for all $m\in \Mt$ we have $|\Mt_0|\ge 2$. Pick $m_0\in\Mt_0$ arbitrary and set $M=\Mt_0\setminus\{m_0\}$. By Lemma~\ref{lem:k21aux}~\ref{lem:k21aux:a}, the unit vector system of $\ell_1(M)$ $C$-dominates the finite basis $(\yy_m)_{m\in M}$. If we set $J=\cup_{m\in M} J_m$,
\begin{align*}
|M| & = \left| \sum_{m\in M} \sum_{(j,n)\in\Nt} \xs_{j,n}^*(\yy_m)\right|\\
&= \left| \sum_{j\in J} \sum_{n\in\Nt_j} \xx_{j,n}^*\left(\sum_{m\in M} \yy_{j,m}\right)\right|\\
&\le \sum_{j\in J} \sum_{n\in\Nt_j} \left| \xx_{j,n}^*\left(\sum_{m\in M} \yy_{j,m}\right)\right|\\
&\le \sum_{j\in J} \max\left\{ \alpha(R) \sup_{n\in\Nt_j} \left|\xx_{j,n}^*\left(\sum_{m\in M} \yy_{j,m}\right)\right|, \frac{1}{R} \left\Vert\sum_{m\in M} \yy_{j,m}\right\Vert \right\}\\
&\le \sum_{j\in J} \max\left\{ \alpha(R) \Delta, \frac{1}{R} \left\Vert\sum_{m\in M} \yy_{j,m}\right\Vert \right\}\\
&\le |J| \alpha(R) \Delta + \frac{1}{R} \sum_{j\in J} \left\Vert\sum_{m\in M} \yy_{j,m}\right\Vert \\
&= |J| \alpha(R) \Delta + \frac{1}{R} \left\Vert\sum_{m\in M} \yy_{m}\right\Vert\\
&\le |J| \alpha(R) \Delta + \frac{C}{R} |M|
\end{align*}
Since
\[
|J|\le |\cup_{m\in \Mt_0} J_m|\le |\Mt_0|-1=|M|,\]
we are done.
\end{proof}

\begin{proof}[Proof of Theorem~\ref{thm:l1sumK21}.]
Let $C\in[1,\infty)$. Pick $R> B:=4C^2D(\alpha) K^2$, $E>R \alpha(R)/(R-C)$ and $E'=2CK\max\{C,D(\alpha)\}$. Let $\UB$ be a well $C$-complemented basic sequence in $\XB$. By Lemma~\ref{lem:k2two}, there is a well $B$-complemented basic sequence $\YB=(\yy_m)_{m\in\Mt}$ in $\XB$ with good $B$-projecting functionals
\[
(\Ind_{\supp(\yy_m)}[\XB^*])_{m\in\Mt}
\]
which is $E'$-equivalent to $\UB$. With the terminology of Lemma~\ref{lem:k21aux}, put
\begin{align*}
\Mt_0&=\{ m\in\Mt \colon |\xx_{j,n}^*(\yy_{j,m})| \le \frac{1}{E} \text{ for all } (j,n)\in\Nt \} \text{ and } \\
\Mt_1&=\Mt\setminus\Mt_0.
\end{align*}
By Lemma~\ref{lem:k21aux} there is $\pi_0\colon\Mt_0 \to \Nt$ such that $\pi_0(m)\in\supp(\yy_m)$ for all $m\in\Mt_0$ and
$(\yy_m)_{m\in\Mt_0}$ $D(\alpha)$-dominates and it is $B$-dominated by $(\xs_{\pi_0(m)})_{m\in\Mt_0}$. In turn, there is $\pi_1\colon\Mt_1\to\Nt$ such that
\[
|\xs^*_{\pi_1(m)}(\yy_m)|>\frac{1}{E}, \quad m\in\Mt_1.
\]
Hence, by Lemma~\ref{lem:k2one}, $(\yy_m)_{m\in\Mt_1}$ $BK$-dominates and it is $BKE$-dominated by $(\xs_{\pi_1(m)})_{m\in\Mt_1}$. We infer that if
\begin{align*}
D_1&=\kappa C \max\{ BK, D(\alpha)\}=4\kappa C^3 D(\alpha) K^2,\\
D_2&=\kappa K \max\{ BKE, B\}= 4 \kappa C^2D(\alpha) K^4 E,
\end{align*}
and $\pi\colon\Mt\to\Nt$ is obtained by glueing the functions $\pi_0$ and $\pi_1$, $\UB$ $D_1$-dominates and it is $D_2$-dominated by $(\xs_{\pi(m)})_{m\in\Mt}$.
\end{proof}

\begin{theorem}\label{thm:l1SAUTAP}
Let $(\XB_j)_{j\in\Jt}$ be an $L$-convex family of normalized unconditional bases of quasi-Banach spaces $(\XX_j)_{j\in\Jt}$. Suppose that:
\begin{enumerate}[label=(\alph*), leftmargin=*, widest=a]
\item $\XB_j$ is strongly absolute with the same function $\alpha$ for all $j\in\Jt$; and

\item\label{l1SAUTAP:SQ} Either:

\begin{enumerate}[label=(\roman*), leftmargin=*, widest=ii]
\item\label{l1SAUTAP:SQ:1} There is a constant $C$ such that $\XB_j^2\sim_C\XB_j$ for all $j\in\Jt$, or

\item\label{l1SAUTAP:SQ:2} There is a constant $C$ such that, for each $j\in\Jt$, $\XB_j\subsetsim_C\XB_k$ for infinitely many values of $k\in\Nt$.
\end{enumerate}
\end{enumerate}
Then the space $\XX=(\bigoplus_{j\in \Jt} \XX_j)_{\ell_1}$ has a UTAP unconditional basis.
\end{theorem}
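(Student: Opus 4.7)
The plan is to apply the Casazza--Kalton paradigm of Theorem~\ref{thm:paradigm} to the normalized unconditional basis $\XB := (\bigoplus_{j\in \Jt} \XB_j)_{\ell_1}$ of $\XX$, verifying in turn its four hypotheses.

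For the $L$-convexity of the lattice structure induced by $\XB$, I would invoke the quantitative observation recalled in Section~\ref{sect:preliminary} that an $L$-convex $\LL$-sum of a uniformly $L$-convex family of quasi-Banach lattices is itself $L$-convex: here $\ell_1$ is $1$-convex and the family $(\XB_j)_{j\in\Jt}$ is $L$-convex by hypothesis. For the uniform universality of $\XB$ for well-complemented block basic sequences, this is exactly the conclusion of Theorem~\ref{thm:l1sumK21}. For the square equivalence $\XB^2 \sim \XB$ up to a permutation, I would apply Lemma~\ref{lem:squares} with $\LL=\ell_1$, which is symmetric and hence subsymmetric: hypothesis \ref{l1SAUTAP:SQ:1} of the present theorem matches condition \ref{it:squaresXj} of the lemma, while hypothesis \ref{l1SAUTAP:SQ:2} matches condition \ref{it:SubSym}.

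The delicate step is verifying that the Banach envelope $\widehat{\XX}$ is anti-Euclidean. Via Proposition~\ref{prop:EnvSums} (together with the fact that $\widehat{\ell_1}=\ell_1$), $\widehat{\XX}$ is isometrically identified with $(\bigoplus_{j\in\Jt} \widehat{\XX_j})_{\ell_1}$. Since each $\XB_j$ is strongly absolute with function $\alpha$, the remark in Section~\ref{sect:preliminary} yields that $\XB_j$ $D(\alpha)$-dominates the unit vector basis of $\ell_1(\Nt_j)$, and this domination descends unchanged to the envelope basis $\widehat{\XB_j}$ in $\widehat{\XX_j}$. In the reverse direction, density of $E_{\XX_j}(\co(B_{\XX_j}))$ in $B_{\widehat{\XX_j}}$ gives $1$-domination of $\widehat{\XB_j}$ by the unit vector basis of $\ell_1(\Nt_j)$. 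Consequently each $\widehat{\XX_j}$ is isomorphic to $\ell_1^{|\Nt_j|}$ with constants depending only on $\alpha$ (and the modulus of concavity of $\XX_j$), so $(\widehat{\XX_j})_{j\in\Jt}$ is an anti-Euclidean family, and Theorem~\ref{thm:AESums} then yields the anti-Euclidean property of the $\ell_1$-sum $\widehat{\XX}$.

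Invoking Theorem~\ref{thm:paradigm} would complete the proof. I expect the chief obstacle to lie in the anti-Euclidean step, namely in extracting from the strongly absolute hypothesis a uniform identification of the envelopes $\widehat{\XX_j}$ with $\ell_1^{|\Nt_j|}$ that is robust enough to feed into Theorem~\ref{thm:AESums}; the other three steps are essentially mechanical invocations of the machinery assembled in Sections~\ref{sect:preliminary} and~\ref{sect:SASense}.
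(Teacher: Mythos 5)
Your proposal is correct and follows essentially the same route as the paper: both verify the four hypotheses of Theorem~\ref{thm:paradigm} via Theorem~\ref{thm:l1sumK21} (uniform universality), Lemma~\ref{lem:squares} (equivalence of $\XB$ with its square), and the $L$-convexity of $\ell_1$-sums of $L$-convex families. The only divergence is the anti-Euclidean step, where the paper takes a shortcut that dissolves the obstacle you flagged: since each $\XB_j$ $D(\alpha)$-dominates the unit vector system of $\ell_1(\Nt_j)$, the full basis $\XB$ $D(\alpha)$-dominates the unit vector system of $\ell_1(\Nt)$, so $\widehat{\XX}$ is isomorphic to $\ell_1(\Nt)$ outright and is anti-Euclidean without any row-by-row identification of the envelopes $\widehat{\XX_j}$ or appeal to Proposition~\ref{prop:EnvSums} and Theorem~\ref{thm:AESums} (though your longer route is also valid).
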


\begin{proof}
Since $\XB_j$ $D(\alpha)$-dominates the unit vector system of $\ell_1(\Nt_j)$ for all $j\in\Jt$, the normalized basis $\XB=(\bigoplus_{j\in \Jt} \XB_j)_{\ell_1}$ of $\XX$ $D(\alpha)$-dominates the unit vector system of $\ell_1(\Nt)$. Hence the Banach envelope of $\XX$ is isomorphic to the anti-Euclidean space $\ell_1(\Nt)$. By Theorem~\ref{thm:l1sumK21} and Remark~\ref{rmk:peakingSA}, $\XB$ is (uniformly) universal for well complemented block basic sequences. By Lemma~\ref{lem:squares}, $\XB^2\sim\XB$ and so applying Theorem~\ref{thm:paradigm} concludes the proof.
\end{proof}

\section{Applications and Examples}\label{sect:examples}

\noindent Theorems~\ref{thm:UTAPLSB} and \ref{thm:l1SAUTAP}, combined with Proposition~\ref{NewStronglyAbsBases}, yield a myriad of new examples of quasi-Banach spaces with a UTAP unconditional bases. In this section we highlight applications only to a sampler of infinite direct sums involving classical spaces, but the reader is encouraged to create their favourite infinite direct sums and use our previous results to check that they enjoy the property of uniqueness (UTAP) of unconditional basis. The possibilities for new examples are endless.

\subsection{Lorentz Sequence Spaces}

Let $\ww=(w_n)_{n=1}^\infty$ be a sequence of non-negative scalars with $w_1>0$ and $(s_n)_{n=1}^\infty$ be the \emph{primitive weight} of $\ww$, defined by
\[
s_n=\sum_{k=1}^n w_k,\quad n\in \NN.
\]
Given $0<p<\infty$ and $0<q\le \infty$, the \emph{Lorentz sequence space} $d_{p,q}(\ww)$ consists of all $f\in c_0$ whose non-increasing rearrangement $(a_n)_{n=1}^\infty$ satisfies
\[
\Vert f\Vert_{p,q,\ww}:=\left( \sum_{n=1}^\infty ( a_n s_n^{1/p})^q \frac{w_n}{s_n}\right)^{1/q} <\infty,
\]
with the usual modification if $q=\infty$. If $(s_n)_{n=1}^\infty$ is \emph{doubling}, i.e., $\sup_m s_{m}/s_{\lceil m/2\rceil}<\infty$, then $\Vert \cdot\Vert_{p,q,\ww}$ is a quasi-norm. In this case, $d_{p,q}(\ww)$ is a symmetric sequence space. Moreover, if $q<\infty$, $d_{p,q}(\ww)$ is minimal.

If $0<p=q<1$, the space $d_{p,p}(\ww)$ coincides with the Lorentz space denoted as $d(\ww, p)$ by Altshuler in \cite{Alt1975} (see also \cites{CRS2007,AlbiacLeranoz2008,AlbiacAnsorena2020b}), while $d_{p,\infty}(\ww)$ coincides with the weak Lorentz space denoted as $d^\infty(\ww, p)$ in \cite{CRS2007}. If $q=\infty$, we will denote by $\ls_{p,\infty}(\ww)$ the separable part of $d_{p,\infty}(\ww)$, i.e., the closed linear span of $c_{00}$ in $d_{p,\infty}(\ww)$.

If $0<q<r\le \infty$, we have
\begin{equation}\label{eq:LorentzEmb}
d_{p,q}(\ww) \subseteq d_{p,r}(\ww),
\end{equation}
and for all $A\subset \NN$ with $|A|=m$,
\begin{equation}\label{eq:LorentzFundamental}
\Vert \Ind_A \Vert_{p,q,\ww} \approx s_m^{1/p}.
\end{equation}
Thus it could be said that for fixed $p$ and $\ww$, the spaces $d_{p,q}(\ww)$ are close to each other in the sense that all of them share (essentially) the fundamental function of the canonical basis. This is important to be taken into account when considering embeddings (see below).

We point out that if $0<p,q<\infty$ and the primitive weight of $\ww'$ is $(s_n^{p/q})_{n=1}^\infty$, then
\begin{equation}\label{eq:rescale}
d_{p,q}(\ww)= d_{q,q}(\ww^{\prime}),
\end{equation}
up to an equivalent norm. Similarly, if $\ww'=(w_n')_{n=1}^\infty$ denotes the weight whose primitive weight is $(s_n^{1/p})_{n=1}^\infty$,
\[
d_{p,\infty}(\ww)=d_{1,\infty}(\ww').
\]
Thus, every sequence Lorentz space $d_{p,q}(\ww)$ can be identified, up to an equivalent quasi-norm, with a Lorentz sequence space $d_{1,q}(\ww')$ for a suitable weight $\ww'$. The advantages of establishing results concerning sequence Lorentz spaces in terms of the scale of spaces $d_{1,q}(\ww)$, $0<q\le\infty$, must be understood in light of \eqref{eq:LorentzEmb} and \eqref{eq:LorentzFundamental}. For a concise introduction to Lorentz sequence spaces we refer to \cite{AABW2019}*{\S9.2}. Next we focus on the features of these spaces that are of interest for the purposes of this paper.

We next include a proof of the fact that all Lorentz sequence spaces are $L$-convex. To that end we need to introduce the following concept.

We say that a sequence $(\Phi(m))_{m=1}^\infty$ of positive scalars has the \emph{upper regularity property} (URP for short) if there is $r\in\NN$ such that
\[
\Phi(rn)\le \frac{r}{2} \Phi(n), \quad n\in\NN.
\]
If $(\Phi(m))_{m=1}^\infty$ has the URP, then there are $0<\alpha<1$ and $0<C<\infty$ such that
\[
\frac{\Phi(n)}{\Phi(m)} \le C \frac{n^\alpha}{m^{\alpha}}, \quad m\le n
\]
(see \cite{DKKT2003}*{\S4}). This implies that
\begin{equation}\label{eq:URP:Regular}
\sum_{n=1}^m \frac{1}{\Phi(n)} \le C' \frac{m}{\Phi(m)}, \quad m\in\NN.
\end{equation}
for some constant $C'$.

Given $r\in(0,\infty)$, the \emph{$r$-convexification} of a sequence space $\LL$ on a countable set $\Jt$ is the sequence space consisting of all $f\colon\Jt\to\FF$ such that $|f|^r\in \LL$. By definition, $\LL$ is lattice $p$-convex if and only its $r$-convexification is $(pr)$-convex. Note that the $r$-convexification of $d_{p,q}(\ww)$ is $d_{pr,qr}(\ww)$.

\begin{theorem}[cf.\ \cite{KaminskaParrish2008}*{Theorem 8} and \cite{Jameson1998}*{Proposition 1}]
Let $\ww$ be a weight whose primitive weight $(s_n)_{n=1}^\infty$ is doubling. Then $d_{1,q}(\ww)$ is $L$-convex for all $0< q \le\infty$. \end{theorem}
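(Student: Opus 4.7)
The approach relies on Kalton's characterization of $L$-convexity in terms of $p$-convexity (recalled in Section~\ref{sect:preliminary}) combined with the $r$-convexification identity for Lorentz spaces just recorded, namely that the $r$-convexification of $d_{1,q}(\ww)$ is $d_{r,qr}(\ww)$. This reduces the task to exhibiting a single $r\in(0,\infty)$ for which $d_{r,qr}(\ww)$ is locally convex (equivalently, $1$-convex): for then $d_{1,q}(\ww)$ is $r$-convex and Kalton's theorem delivers $L$-convexity.

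The first step is to extract quantitative growth information from the doubling hypothesis. Iterating the inequality $s_{2n}\le K s_{n}$ yields $s_{n}\lesssim n^{\alpha}$ with $\alpha=\log_{2}K$, and a brief computation then shows that for every $\beta\in(0,1/\alpha)$ the sequence $(s_{n}^{\beta})_{n=1}^{\infty}$ enjoys the URP defined in the excerpt; in particular the summation estimate \eqref{eq:URP:Regular} is valid for it. I would then choose $r$ so that the primitive sequence $(s_{n}^{1/r})$, which naturally governs the $d_{r,qr}(\ww)$-quasi-norm, lies in this URP regime.

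The second step is to convert the URP estimate into local convexity of $d_{r,qr}(\ww)$; this is essentially the content of the cited results of Jameson and Kamińska--Parrish. Concretely, one would compare the Lorentz quasi-norm with an averaged Banach-Lorentz majorant built from $(s_{n}^{1/r})$, and invoke \eqref{eq:URP:Regular} to establish their equivalence. The main obstacle I anticipate is the endpoint case $q=\infty$, where the weak-type quasi-norm on $d_{r,\infty}(\ww)$ is not obviously equivalent to a Banach norm; here one must use \eqref{eq:URP:Regular} to dominate the weak-type norm by a strong-type Banach Lorentz norm on a suitable auxiliary Lorentz space, at the cost of a controlled loss in the constants. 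Once local convexity of $d_{r,qr}(\ww)$ is secured across the full range $0<q\le\infty$, Kalton's criterion concludes the proof.
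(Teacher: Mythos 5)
Your argument is essentially the paper's: both reduce $L$-convexity to lattice $p$-convexity via Kalton's characterization, pass to the $r$-convexification $d_{r,qr}(\ww)$ through the convexification identity, obtain the URP of $(s_n^{\beta})_{n=1}^\infty$ for suitable small $\beta>0$ by iterating the doubling inequality, and then defer the passage from the regularity estimate \eqref{eq:URP:Regular} to local convexity of the convexified space to the literature (the paper quotes \cite{CRS2007}*{Theorems 2.5.10 and 2.5.11} for this, including the weak-type endpoint you flag). The only bookkeeping slip is that local convexity of the $r$-convexification yields $(1/r)$-convexity of $d_{1,q}(\ww)$ rather than $r$-convexity, and one should also take $r$ large enough that $qr\ge 1$; neither point affects the conclusion, since Kalton's criterion only requires some positive convexity exponent.
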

\begin{proof} We will show that $d_{1,q}(\ww)$ is $r$-convex for some $r>0$.
Let $C\in[1,\infty)$ be such that $s_{2n} \le C s_n$ for all $n\in\NN$. Pick $\alpha_0\in(0,1)$ and $k\in\NN$ such that
$
C^{\alpha_0} \le 2^{1-1/k}
$.
Then, if $r=2^k$,
\[
s_{rn}^\alpha \le \frac{1}{2} r s_n^\alpha, \quad n\in\NN,\; 0<\alpha<\alpha_0.
\]
That is, $(s_n^\alpha)_{n=1}^\infty$ has the URP and, then, satisfies inequality \eqref{eq:URP:Regular} for all $0<\alpha<\alpha_0$. Let $\ww_\alpha$ be the weight whose primitive weight if $(s_n^\alpha)_{n=1}^\infty$. By \cite{CRS2007}*{Theorems 2.5.10 and 2.5.11}, $d_{p,q}(\ww_{\alpha p})$ is locally convex for all $\alpha<\alpha_0$ and $q>1$. Note that local convexity is equivalent to lattice $1$-convexity. We infer that $d_{1,q}(\ww)$ is lattice $r$-convex for all $0<r<\min\{\alpha_0,q\}$.
\end{proof}

Next we tackle the strong absoluteness or the canonical basis of Lorentz sequence spaces.

\begin{proposition}\label{prop:LorentzSA}
Suppose that the primitive weight $(s_n)_{n=1}^\infty$ of $\ww=(w_n)_{n=1}^\infty$ is doubling.
\begin{enumerate}[label=(\alph*), leftmargin=*]
\item\label{prop:LorentzSA:W} The following are equivalent:
\begin{enumerate}[label=(\roman*), leftmargin=*, widest=iii]
\item\label{it:WL1} $d_{1,\infty}(\ww)$ is continuously included in $\ell_1$.
\item\label{it:WL2} $\sum_{n=1}^\infty 1/s_n <\infty$.
\item\label{it:WL3} The unit vector system is a strongly absolute basis of $\ls_{1,\infty}(\ww)$.
\end{enumerate}
\item\label{prop:LorentzSA:NW}
Let $1<q<\infty$ and let $q'$ be its conjugate exponent. Suppose that $\sum_{n=1}^\infty w_n^{-q'+1} s_n^{-1} <\infty$. Then the unit vector system is a strongly absolute basis of $d_{1,q}(\ww)$.
\item\label{prop:LorentzSA:NWO}
Let $0<q\le 1$.
\begin{enumerate}[label=(\roman*), leftmargin=*, widest=ii]
\item\label{it:Lq:1} $d_{1,q}(\ww)$ is continuously included in $\ell_1$ if and only if $\inf_n s_n/n>0$. Moreover.
\item\label{it:Lq:2} if $\lim_n s_n/n=\infty$, then the unit vector system is a strongly absolute basis of $d_{1,q}(\ww)$.
\item\label{it:Lq:3} if $\inf_n s_n/n>0$ and $q<1$, the unit vector system of $d_{1,q}(\ww)$ is uniformly universal for well complemented block basic sequences.
\end{enumerate}

\end{enumerate}
\end{proposition}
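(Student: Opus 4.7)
The plan is to convert each hypothesis on $\ww$ into a pointwise bound on the decreasing rearrangement $(a_n^*)$ of the coefficient sequence $(\xx_n^*(f))$, and then extract strong absoluteness by splitting $\|f\|_1=\sum a_n^*$ into a head controlled by $\|f\|_\infty$ and a tail controlled by the relevant Lorentz quasi-norm.

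For part~(a), I would first prove (i)$\Leftrightarrow$(ii) by testing the inclusion on the vectors $f_N=\sum_{k\le N} s_k^{-1}\,\ee_k$: the decreasing rearrangement is $(s_k^{-1})_{k\le N}$, so $\|f_N\|_{1,\infty,\ww}=1$ while $\|f_N\|_1=\sum_{k\le N}s_k^{-1}$, and the converse follows from the pointwise bound $a_n^*\le \|f\|_{1,\infty,\ww}/s_n$ built into the weak Lorentz quasi-norm. For (ii)$\Rightarrow$(iii), assuming $\|f\|_{1,\infty,\ww}<R\|f\|_1$, I split $\|f\|_1\le N\|f\|_\infty+\|f\|_{1,\infty,\ww}\,T_N$ with $T_N:=\sum_{n>N}s_n^{-1}\to 0$, choose $N$ with $RT_N\le 1/2$ so as to absorb $\|f\|_1$, and obtain (iii) with $\alpha(R)=2N(R)$; the converse (iii)$\Rightarrow$(ii) is immediate from the $\ell_1$-domination property of strongly absolute bases noted right after the definition, applied once more to $f_N$. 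For part~(b), H\"older's inequality with conjugate exponents $q$ and $q'$ applied to the factorization $a_n^*=(a_n^*s_n)(w_n/s_n)^{1/q}\cdot w_n^{-1/q}s_n^{1/q-1}$ yields, after using $q'/q=q'-1$ and $q'(1/q-1)=-1$, the bound $\|f\|_1\le \|f\|_{1,q,\ww}\,T^{1/q'}$ with $T=\sum w_n^{1-q'}/s_n<\infty$; the splitting argument of (a) then transfers verbatim, with $T_N^{1/q'}\to 0$ in place of $\sum_{n>N}s_n^{-1}$.

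For part~(c), the necessity in (i) is obtained by testing $\Ind_{[1,n]}$ and invoking \eqref{eq:LorentzFundamental}: $\|\Ind_{[1,n]}\|_{1,q,\ww}\approx s_n$ while $\|\Ind_{[1,n]}\|_1=n$, so $\inf s_n/n=0$ rules out the inclusion. Sufficiency uses the reparametrization \eqref{eq:rescale} to identify $d_{1,q}(\ww)$ up to equivalent quasi-norms with a classical Lorentz space $d(\ww',q)$ in the sense of \cite{Alt1975,AlbiacLeranoz2008}, for which the $\ell_1$-inclusion under the corresponding weight condition is standard. For (ii), the naive splitting of (a) breaks down because $\lim s_n/n=\infty$ does not entail $\sum s_n^{-1}<\infty$, so I would exploit that $t\mapsto t^q$ is concave for $0<q\le 1$: the functional $(a_n^*)\mapsto\|f\|_{1,q,\ww}^q=\sum a_n^{*q}s_n^{q-1}w_n$ is then concave, and its minimum over decreasing sequences with prescribed $\|f\|_1$ and $\|f\|_\infty$ is attained at indicator-type extremals $a_n^*=\|f\|_\infty\Ind_{\{n\le K\}}$ with $K=\|f\|_1/\|f\|_\infty$; the monotonicity bound $s_n^{q-1}\ge s_K^{q-1}$ for $n\le K$ then yields $\|f\|_{1,q,\ww}\ge \|f\|_\infty s_K$, so $\|f\|_{1,q,\ww}<R\|f\|_1$ forces $s_K/K<R$, and $\lim s_n/n=\infty$ bounds $K$ in terms of $R$, giving the required $\alpha(R)$. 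Finally, (iii) follows by identifying $d_{1,q}(\ww)$ via \eqref{eq:rescale} with a non-locally convex Lorentz space $d(\ww',q)$, $q<1$, for which the uniform universality of the unit vector basis for well-complemented block basic sequences is contained in the proof of uniqueness of unconditional basis in \cite{AlbiacLeranoz2008}.

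The main technical hurdle is part~(c)(ii), where the concavity-based extremal argument must replace the simple splitting used in parts (a) and (b); a close second is (c)(iii), for which the constants in the identification \eqref{eq:rescale} have to be tracked uniformly enough to transfer the quantitative universality of \cite{AlbiacLeranoz2008} in the required form.
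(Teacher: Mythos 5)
Parts (a), (b) and (c)(i) of your proposal essentially coincide with the paper's own proof: the same test vector $(1/s_n)_{n=1}^\infty$, the same head--tail splitting with the tail controlled by H\"older's inequality (the paper runs (a)(ii)$\Rightarrow$(iii) and (b) as one computation for $1<q\le\infty$), and the same reduction of (c)(i) to a comparison with the constant weight (your detour through the identification \eqref{eq:rescale} is harmless). For (c)(iii) both you and the paper defer to earlier work; the paper obtains the \emph{uniform} universality by observing that the proof of the peaking property in \cite{AlbiacAnsorena2020b}*{Proposition 4.2} is quantitative, via a constructive version of Popa's lemma, and then invoking Proposition~\ref{prop:k2one} --- your concern about tracking constants is exactly the point the paper addresses there.

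The genuine problem is (c)(ii). Your extremal argument hinges on the inequality $\Vert f\Vert_{1,q,\ww}\ge \Vert f\Vert_\infty\, s_K$ with $K=\Vert f\Vert_1/\Vert f\Vert_\infty$, and this is false under the stated hypotheses. First, the extreme points of the set of non-increasing sequences with prescribed sum and prescribed first term are not the indicators $\Vert f\Vert_\infty \Ind_{[1,K]}$ but two-step sequences $B\Ind_{[1,i]}+\mu\Ind_{(i,j]}$: writing $a_n=\sum_{k\ge n} d_k$, the constraints become two linear equations on $(d_k)_{k}\ge 0$, so extreme points have at most two nonzero increments. Second, and fatally, testing the ($q=1$) functional on such a sequence and comparing with $Bs_K$ amounts, after Abel summation, to Jensen's inequality $\sum_n p_n s_n\ge s_{\sum_n n p_n}$ for the probability weights $p_n=d_n/\sum_k d_k$, i.e.\ to \emph{convexity} of $n\mapsto s_n$, which is not assumed: with $a=(B,B/2,B/2,0,\dots)$ one needs $(s_1+s_3)/2\ge s_2$, that is $w_3\ge w_2$, and one can have $w_2>w_3$ while $(s_n)_{n=1}^\infty$ is doubling and $s_n/n\to\infty$. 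The paper avoids all of this by Abel summation: since $\Vert f\Vert_1=\sum_n (a_n-a_{n+1})n$ and $\Vert f\Vert_{1,1,\ww}=\sum_n(a_n-a_{n+1})s_n$, choosing $m$ with $s_n\ge 2Rn$ for $n>m$ gives $\Vert f\Vert_1\le m\Vert f\Vert_\infty+\tfrac{1}{2R}\Vert f\Vert_{1,1,\ww}$ with no convexity required, and the case $q<1$ follows from \eqref{eq:LorentzEmb} and Lemma~\ref{lem:SADom}. Your scheme can be repaired in the same spirit --- one does have $\Vert f\Vert_{1,1,\ww}\ge \tfrac12 \Vert f\Vert_1 \inf_{n\ge K/2} s_n/n$, which suffices --- but the inequality you actually invoke does not hold.
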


\begin{proof}
The implication \ref{it:WL3} $\Rightarrow$ \ref{it:WL1} in \ref{prop:LorentzSA:W} is obvious. If $f=(1/s_n)_{n=1}^\infty$, we have $\Vert f \Vert_{1,\infty,\ww}=1$. This yields \ref{it:WL1} $\Rightarrow$ \ref{it:WL2}. To prove \ref{prop:LorentzSA:NW} and the implication \ref{it:WL2} $\Rightarrow$ \ref{it:WL3} in \ref{prop:LorentzSA:W} we pick $1<q\le \infty$ and $0<R<\infty$. Choose $m=m(R)\in\NN$ such that
\[
\sum_{n=m+1}^\infty =\frac{1}{w_n^{q'}} \frac{w_n}{s_n}\le \frac{1}{(2R)^{q'}}.
\]
Let $f\in\FF^\NN$ and denote by $(a_n)_{n=1}^\infty$ its non-increasing rearrangement.
By Holder's inequality,
\begin{align*}
\Vert f\Vert_1&
=\sum_{n=1}^m a_n + \sum_{n=m+1}^\infty \frac{1}{w_n} a_n s_n \frac{w_n}{s_n}\\
& \le m \Vert f\Vert_\infty+\frac{1}{2R} \left( \sum_{n=m+1}^\infty a_n^q s_n^q \frac{w_n}{s_n}\right)^{1/q} \\
&\le m \Vert f\Vert_\infty+\frac{1}{2R} \Vert f\Vert_{1,q,\ww}.
\end{align*}
Thus, if $\Vert f\Vert_{1,q,\ww}\le R \Vert f\Vert_1$, we obtain $\Vert f\Vert_1\le 2m a_1=2m\Vert f\Vert_\infty$.

As far as \ref{prop:LorentzSA:NWO} is concerned, the ``only if'' part in \ref{it:Lq:1} is clear. By \eqref{eq:LorentzEmb}, to prove the converse it
suffices to consider the case $q=1$. If $\ww'=(w'_{n})_{n=1}^\infty$ is the weight defined by $w'_n=1$ for all $n\in\NN$, then $d_{1,1}(\ww)\subseteq d_{1,1}(\ww')$. Since $d_{1,1}(\ww')=\ell_1$ we are done.

\ref{it:Lq:2} is essentially known (see \cite{NawOrt1985}*{Lemma 4} and \cite{KLW1990}*{Theorem 2.6}). However, as an explicit proof is not available in the literature, we next include one for the sake of completeness.
Again, by Lemma~\ref{lem:SADom}, it suffices to consider the case $q=1$.
Let $R\in(0,\infty)$. Choose $m\in\NN$ such that $s_n\ge 2R n$ for all $n\ge m+1$.
If $(a_n)_{n=1}^\infty$ is the non-increasing rearrangement of $f$, by Abel's summation formula,
\begin{align*}
\sum_{n=1}^\infty a_n &
=\sum_{n=1}^\infty (a_n-a_{n+1}) n \\
&\le \sum_{n=1}^m (a_n-a_{n+1}) n + \frac{1}{2R} \sum_{n=m+1}^\infty (a_n -a_{n+1}) s_n\\
&\le -m a_{m+1}+\sum_{n=1}^m a_n + \frac{1}{2R} \sum_{n=1}^\infty (a_n -a_{n+1}) s_n\\
&\le m \Vert f \Vert_\infty+\frac{1}{2R} \Vert f\Vert_{1,1,\ww}.
\end{align*}
Therefore, $\Vert f\Vert_1 \le 2m$ whenever $ \Vert f\Vert_{1,1,\ww}\le R \Vert f\Vert_1$.

In regards to \ref{it:Lq:3}, we point out that it was proved in \cite{AlbiacAnsorena2020b}*{Proposition4.2} that $d_{1,q}(\ww)$ has the peaking property. A close look at the proof of this result reveals that, in fact, it has the uniform peaking property. Essentially, this is due to the validity of a constructive version of the proof of \cite{Popa1981}*{Lemma 3.1}. Specifically, there is a function $\zeta\colon(0,\infty)\to (0,\infty)$ depending on $p$ and $\ww$ such that every normalized disjointly supported sequence $(\yy_m)_{m=1}^\infty$ with respect to $(\ee_{n})_{n=1}^{\infty}$ with $\liminf_m \Vert \yy_m \Vert_\infty< \zeta(\varepsilon)$ has a subsequence that $(1+\varepsilon)$-dominates the unit vector system of $\ell_p$. By Proposition~\ref{prop:k2one}, $(\ee_{n})_{n=1}^{\infty}$ is uniformly universal for block basic sequences.
\end{proof}

To complement the theoretical contents of this section we shall introduce lattice concavity and a quantitative tool from approximation theory that serves in particular to measure how far an unconditional basis is from the canonical $\ell_{1}$-basis. The main idea is to use embeddings into Lorentz sequence spaces to deduce that certain bases are strongly absolute.

Given a (semi-normalized)
unconditional basis $\XB$ of a quasi-Banach space $\XX$ we define its \emph{lower democracy} function as
\[
\varphi_l[\XB](m)=\inf_{|A|\ge m} \Vert \Ind_A[\XB]\Vert, \quad m\in\NN.
\]
If $\LL$ is a sequence space, $\varphi_l[\LL]$ will denote the lower democracy function of its unit vector system.
The quasi-Banach lattice $\LL$ is said to be \emph{$q$-concave}, $0<q\le \infty$, if there is a nonnegative constant $C$ such that
\[
\left(\sum_{i=1}^k \Vert f_i\Vert^q\right)^{1/q}
\le C \left\Vert\left(\sum_{i=1}^k |f_i|^q\right)^{1/q}\right\Vert, \quad f_i\in \LL.
\]
Any quasi-Banach lattice is trivially $\infty$-concave.

\begin{theorem}\label{thm:embLorentz}
Let $\XX$ be a quasi-Banach space with a normalized unconditional basis $\XB=(\xx_n)_{n=1}^\infty$. Suppose that $\XB$ induces a $q$-concave lattice structure on $\XX$ for some $0<q\le \infty$. Let $\ww=(w_n)_{n=1}^\infty$ be a weight with
\[
s_m:=\sum_{n=1}^m w_n \le \varphi_l[\XB](m), \quad m\in\NN.
\]
Then $\XB$ dominates the unit vector system of $d_{1,q}(\ww)$, i.e., $\XX$ continuously embeds into $d_{1,q}(\ww)$ via $\XB$.
\end{theorem}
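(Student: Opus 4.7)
The plan is to prove the quantitative estimate $\|(a_n)\|_{1,q,\ww} \le C\,\|f\|_\XX$ on finitely supported $f = \sum a_n\xx_n$, which will yield the asserted bounded operator $\xx_n \mapsto \ee_n$ by density. After renorming $\XB$ to be $1$-unconditional and using the lattice structure to reduce to $a_n\ge 0$, the starting point is a level-set estimate: for each $t>0$, the pointwise-in-lattice bound $|f| \ge t\,\Ind_{\{n:\, a_n > t\}}[\XB]$, together with the hypothesis $\varphi_l[\XB](m)\ge s_m$, gives
\[
t\, s_{m(t)} \le \|f\|_\XX, \qquad m(t) := |\{n:\, a_n > t\}|.
\]
This already settles the endpoint $q = \infty$, since $\|f\|_{1,\infty,\ww} = \sup_n a_n^* s_n$ and each term is dominated by $\|f\|_\XX$ upon applying the estimate just below $a_n^*$.

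For finite $q$, I would decompose $f$ dyadically in the magnitude of its coefficients. Set $A_k = \{n :\, 2^k < a_n \le 2^{k+1}\}$, $m_k = |A_k|$, and $f_k = \sum_{n\in A_k} a_n\xx_n$ for $k\in\ZZ$. The pieces $f_k$ are pairwise disjointly supported in the lattice, $|f_k| \ge 2^k \Ind_{A_k}[\XB]$, and hence $\|f_k\|_\XX \ge 2^k s_{m_k}$ by the lower democracy bound; the $q$-concavity of $\XX$ supplies
\[
\sum_k (2^k s_{m_k})^q \le \sum_k \|f_k\|_\XX^q \le C_1^q\, \|f\|_\XX^q, \qquad (\star)
\]
where $C_1$ is the $q$-concavity constant of $\XX$. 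Writing $M_k := m(2^k) = \sum_{j\ge k} m_j$, after rearrangement the indices with $a_n^* \in (2^k, 2^{k+1}]$ fill the consecutive interval $(M_{k+1}, M_k]$, so the routine fundamental-function estimate $\sum_{n=a+1}^b s_n^{q-1}w_n \lesssim_q s_b^q$ (obtained by telescoping $s_b^q - s_a^q$ and treating the cases $q\ge 1$ and $q < 1$ separately using only monotonicity of $s$) leads to
\[
\|f\|_{1,q,\ww}^q \lesssim_q \sum_k (2^k s_{M_k})^q.
\]

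The concluding step is to transfer this sum from the cumulative sizes $M_k$ to the level sizes $m_k$ used in $(\star)$. Assuming (as is standard, and in any event achievable up to the doubling constant of $s$) that $\ww$ is non-increasing, so $s$ is subadditive, we have $s_{M_k}\le \sum_{j\ge k} s_{m_j}$. Setting $u_k := 2^k s_{m_k}$, this reads $2^k s_{M_k} \le (Au)_k$, where $A$ is the discrete convolution operator $(Au)_k = \sum_{i\ge 0} 2^{-i} u_{k+i}$. Since the kernel $(2^{-i})_{i\ge 0}$ lies in $\ell_1$, the operator $A$ is bounded on $\ell_q(\ZZ)$ for every $0 < q \le \infty$: Minkowski's inequality handles $q\ge 1$, and the $q$-subadditivity of the $\ell_q$-quasi-norm handles $q < 1$. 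Therefore $\|(2^k s_{M_k})\|_{\ell_q} \lesssim_q \|u\|_{\ell_q} \le C_1\|f\|_\XX$ by $(\star)$, and combining the estimates finishes the proof. The main obstacle is precisely this last transposition step: both the subadditivity of $s$ (which forces the non-increasing-weight reduction) and the recognition of $A$ as a geometric convolution bounded uniformly in $q$ are essential, as a naive pointwise bound such as $2^k s_{M_k} \le \|f\|_\XX$ summed over $k$ diverges.
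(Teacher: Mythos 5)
Your overall architecture is sound and genuinely different from the paper's: you apply $q$-concavity to the \emph{disjointly supported} dyadic level pieces $f_k$, obtaining $(\star)$ with the level cardinalities $m_k$, and then try to pass to the cumulative cardinalities $M_k=\sum_{j\ge k}m_j$ that govern $\Vert f\Vert_{1,q,\ww}$. The paper never faces this transfer: by Abel summation it writes $f=\tfrac12\sum_j 2^{-j}\sum_{k\le j}f_k$ and applies $q$-concavity to the \emph{nested} partial sums $\sum_{k\le j}f_k$, whose supports already have cardinality $M_j$ and whose coefficients are all $\ge t$, so the lower democracy bound produces $\sum_j 2^{-jq}s_{M_j}^q\lesssim \Vert f\Vert^q$ directly. (The nesting costs nothing because $\bigl(\sum_j|2^{-j}\sum_{k\le j}f_k|^q\bigr)^{1/q}\approx |f|$ in the lattice.) Your steps through the bound $\Vert f\Vert_{1,q,\ww}^q\lesssim_q\sum_k(2^ks_{M_k})^q$ are correct.

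The gap is in the transposition step, and it is not cosmetic. The pointwise bound $2^ks_{M_k}\le (Au)_k$ requires $s_{a+b}\le s_a+s_b$, and your fallback --- that one may assume $\ww$ non-increasing ``up to the doubling constant of $s$'' --- is false: a doubling primitive weight need not be equivalent to a concave one. Worse, the intended applications of this theorem live exactly where subadditivity fails: in Proposition~\ref{cor:SADemConcave} one takes $s_m=\varphi_l[\XB](m)$, which for $\ell_p$ or $H_p(\TT^d)$ with $p<1$ grows like $m^{1/p}$ and is therefore \emph{super}additive. Concretely, with $s_m=m^2$ and $m_j=1$ for $j=k,\dots,k+L-1$ one has $2^ks_{M_k}=2^kL^2$ while $(Au)_k=2^kL$, so your pointwise inequality fails by an unbounded factor. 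The summed inequality $\sum_k(2^ks_{M_k})^q\lesssim\sum_k(2^ks_{m_k})^q$ \emph{is} true for doubling $s$, but it needs a different mechanism: e.g., for each $k$ choose $i_0\ge 0$ with $m_{k+i_0}\ge(1-2^{-\varepsilon})2^{-\varepsilon i_0}M_k$, use $s_n\le D(n/m)^\beta s_m$ to get $2^{kq}s^q_{M_k}\lesssim 2^{(\varepsilon\beta-1)qi_0}\,(2^{k+i_0}s_{m_{k+i_0}})^q$, and sum with $\varepsilon\beta<1$. With that replacement (or by switching to the paper's nested decomposition) your argument closes.
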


\begin{proof}
If $q=\infty$ the result is known (see \cite{AADK2016}*{Lemma 6.1}). Suppose that $q<\infty$.
Put $\ww'=(w_n')_{n=1}^\infty$, where $w'_n=s_n^q-s_{n-1}^q$ with the convention that $s_0=0$.
Let $(a_n)_{n=1}^\infty\in c_{00}$ such that $(|a_n|)_{n=1}^\infty$ is non-increasing. Put $t=|a_1|$ and for each $k\in\NN$ consider the set
\[
J_k=\{n\in\NN \colon t 2^{-k}< |a_n| \le t 2^{-k+1}\}.
\]
Notice that $(J_k)_{k=1}^\infty$ is a partition of $\{n\in\NN \colon a_n\not=0\}$. Set $m_0=0$ and for $j\in\NN$ put $m_j=\sum_{k=1}^j |J_k|$, so that $J_j=\{n\in\NN \colon m_{j-1}+1\le n \le m_j\}$ for all $j\in\NN$.
Define
\[
f_k=2^k \sum_{n\in J_k} a_n\,\xx_n.
\]
By Abel's summation formula,
\[
f:=\sum_{n=1}^\infty a_n \, \xx_n=\sum_{k=1}^\infty 2^{-k} f_k =\frac{1}{2}\sum_{k=1}^\infty 2^{-j} \sum_{k=1}^j f_k.
\]
Therefore, if $C$ is the $q$-concavity constant of $\XB$ and $K$ is its unconditionality basis constant,
\begin{align*}
\Vert f \Vert^q& \ge \frac{t^q}{2^q C^q K^q} \sum_{j=1}^\infty 2^{-jq} s_{m_j}^q\\
&=\frac{t^q}{2^q C^q K^q} \sum_{j=1}^\infty 2^{-jq} \sum_{k=1}^j\sum_{n=J_k} w_n' \\
&=\frac{(2^q-1) t^q }{C^q K^q} \sum_{k=1}^\infty 2^{-kq} \sum_{n=J_k} w_n' \\
&=\frac{(2^q-1) }{2^q C^q K^q} \sum_{n=1}^\infty |a_n|^q w_n'.
\end{align*}
Using \eqref{eq:rescale}, and taking into account that the concavity constants and the unconditionality constants of $(\xx_{\pi(n)})_{n=1}^\infty$ are still $C$ and $K$ for any permutation $\pi$ of $\NN$, we are done.
\end{proof}

We also need the dual property of URP. A sequence $(\Phi(m))_{m=1}^\infty$ of positive scalars is said to have the \emph{lower regularity property} (LRP for short) if there is $r\in\NN$ such that
\[
\Phi(rn)\ge 2 \Phi(n)\, \quad n\in\NN.
\]
$(\Phi(m))_{m=1}^\infty$ has the LRP if and only if $(m/\Phi(m))_{m=1}^\infty$ has the URP. Hence a dual inequality of \eqref{eq:URP:Regular} holds, i.e., for any sequence $(\Phi(m))_{m=1}^\infty$ with the LRP there is a constant $C$ such that
\begin{equation*}
\sum_{n=1}^m \frac{\Phi(n)}{n} \le C\Phi(m), \quad m\in\NN.
\end{equation*}

\begin{lemma}\label{lem:ConcaveLRP}
Suppose that a sequence space $\LL$ on a set $\Jt$ is $q$-concave for some $0<q<\infty$. The $\varphi_l[\LL]$ has the LRP.
\end{lemma}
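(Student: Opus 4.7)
The plan is to exploit $q$-concavity on indicators of disjoint blocks inside any large set. Fix $C$ to be the $q$-concavity constant of $\LL$, and choose the integer $r\ge (2C)^q$, so that $r^{1/q}\ge 2C$. I claim this $r$ witnesses the LRP for $\varphi_l[\LL]$.

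The key step is the following estimate: for every $n\in\NN$ and every finite $A\subseteq\Jt$ with $|A|\ge rn$, one has $\Vert\Ind_A\Vert_\LL\ge (r^{1/q}/C)\,\varphi_l[\LL](n)$. To prove it, I would partition a subset of $A$ into $r$ pairwise disjoint pieces $A_1,\dots,A_r$, each of cardinality exactly $n$. Because the $A_i$ are disjoint, the indicator functions $\Ind_{A_i}$ have disjoint supports, and so
\[
\left(\sum_{i=1}^r |\Ind_{A_i}|^q\right)^{1/q}=\Ind_{A_1\cup\cdots\cup A_r}\le \Ind_A,
\]
the last inequality holding in the lattice sense. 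Applying the $q$-concavity inequality and then lattice monotonicity of the quasi-norm on $\LL$ gives
\[
\left(\sum_{i=1}^r\Vert\Ind_{A_i}\Vert^q\right)^{1/q}\le C\left\Vert\left(\sum_{i=1}^r |\Ind_{A_i}|^q\right)^{1/q}\right\Vert_\LL\le C\,\Vert\Ind_A\Vert_\LL.
\]
Since $|A_i|=n$, every term on the left is at least $\varphi_l[\LL](n)$ by definition of the lower democracy function, so the left-hand side is at least $r^{1/q}\varphi_l[\LL](n)$, and the claim follows.

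Taking the infimum over all $A$ with $|A|\ge rn$ yields $\varphi_l[\LL](rn)\ge (r^{1/q}/C)\,\varphi_l[\LL](n)\ge 2\,\varphi_l[\LL](n)$, which is exactly the LRP for the constant $r$ chosen above. There is no real obstacle here: once one notices that disjoint indicators realize equality in the lattice expression $(\sum|\Ind_{A_i}|^q)^{1/q}=\Ind_{\cup A_i}$, the $q$-concavity hypothesis does all the work, and the only arithmetic point is picking $r$ large enough that $r^{1/q}/C\ge 2$, which is possible precisely because $q<\infty$.
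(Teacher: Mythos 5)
Your proposal is correct and follows essentially the same route as the paper's own proof: partition (a subset of) a set of cardinality $rn$ into $r$ disjoint pieces of cardinality $n$, apply the $q$-concavity inequality to the disjointly supported indicators, and choose $r\ge (2C)^q$ so that $r^{1/q}/C\ge 2$. Your version is slightly more careful in handling sets with $|A|\ge rn$ rather than $|A|=rn$, which matches the infimum in the definition of $\varphi_l[\LL]$, but the argument is the same.
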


\begin{proof}
Let $r$, $m\in\NN$, and $A\subseteq\Jt$ with $|A|=rm$. Pick a partition $(A_j)_{j=1}^r$ of $A$ with $|A_j|=m$ for all $j=1$, \dots $r$. If $C$ is the $q$-concavity constant of $\LL$,
\[
\Vert \Ind_A\Vert_\LL\ge \frac{1}{C}\left( \sum_{j=1}^q \Vert \Ind_{A_j} \Vert^q_\LL\right)^{1/r} \ge \frac{r^{1/q}}{C} \varphi_l[\LL](m).
\]
Hence, if we pick $r \ge (2C)^q$ we get $\varphi_l[\LL](rm)\ge 2 \varphi_l[\LL](m)$.
\end{proof}

Even without having any information on the concavity of the space $X$, Proposition~\ref{cor:SADemConcave} below
provides an improvement of \cite{AlbiacAnsorena2020b}*{Proposition~5.6}. In addition to that, it shows that imposing some nontrivial concavity to the lattice structure allows to weaken the assumption on the lower democracy function.

\begin{proposition}\label{cor:SADemConcave}
Let $\XX$ be a quasi-Banach space with a normalized unconditional basis $\XB$. Suppose that $\XB$ induces a $q$-concave lattice structure,
$1\le q\le \infty$. Denote by $q'$ the conjugate exponent of $q$, and put $s_{m}= \varphi_l[\XB](m)$ for all $m\in\NN$.
Suppose that either $q=1$ and
\[
\lim_m \frac{s_m}{m}=\infty,
\]
or $q>1$ and
\[
\sum_{m=1}^\infty \frac{m^{q'-1}}{s_m^{q'}}<\infty.
\]
Then $\XB$ is strongly absolute.
\end{proposition}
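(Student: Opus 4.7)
The plan is to realize $\XB$ as dominating the unit vector system of a suitable Lorentz sequence space $d_{1,q}(\ww)$ whose canonical basis is strongly absolute, and then invoke Lemma~\ref{lem:SADom} to transfer the strong absoluteness. The case $q=\infty$ is essentially immediate: taking $\ww$ to be the weight with primitive $s_n$, Theorem~\ref{thm:embLorentz} yields the domination, while the hypothesis $\sum 1/s_n<\infty$ is precisely the condition of Proposition~\ref{prop:LorentzSA}\ref{prop:LorentzSA:W}.

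For the main case $1\le q<\infty$, the naive choice $w_n=s_n-s_{n-1}$ may fail because $s_n$ can be locally flat, which would make the sum $\sum w_n^{1-q'}/s_n$ in Proposition~\ref{prop:LorentzSA}\ref{prop:LorentzSA:NW} trivially infinite. To circumvent this, I would smooth $s_n$ by
\[
\tilde{s}_n:=\frac{1}{n+1}\sum_{k=1}^{n}s_k,\qquad n\ge 1,\qquad \tilde{s}_0:=0,
\]
and take $\ww=(w_n)$ to be the weight with primitive $\tilde{s}_n$. A short computation produces the clean identity
\[
w_n=\frac{s_n-\tilde{s}_n}{n}\qquad (n\ge 1),
\]
from which one reads off that $0<\tilde{s}_n<s_n$ and $\tilde{s}_n$ is strictly increasing. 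In particular Theorem~\ref{thm:embLorentz} yields that $\XB$ dominates the unit vector system of $d_{1,q}(\ww)$.

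The hard part will be verifying that the unit vector system of $d_{1,q}(\ww)$ is strongly absolute, through Proposition~\ref{prop:LorentzSA}. Lemma~\ref{lem:ConcaveLRP} provides the lower regularity property for $s_n$, so there is $r\in\NN$ with $s_{rn}\ge 2s_n$, hence $s_{n/r^j}\le 2^{-j}s_n$ by iteration. Partitioning $[1,n]$ into dyadic-type blocks $(n/r^{j+1},n/r^j]$, bounding $s_k$ on the $j$-th block by $s_n/2^j$, and summing the resulting geometric series produces the crucial estimate
\[
\tilde{s}_n\le\frac{2(r-1)}{2r-1}\,s_n,
\]
from which $s_n-\tilde{s}_n\ge\tilde{s}_n/(2r-1)$ and the identity for $w_n$ gives $w_n\ge\tilde{s}_n/((2r-1)\,n)$. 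Conversely, the elementary lower bound $\tilde{s}_n\ge\tfrac{1}{2}s_{\lceil n/2\rceil}$ combined with a change of summation variable produces $\sum_n n^{q'-1}/\tilde{s}_n^{q'}\le C\sum_n n^{q'-1}/s_n^{q'}$ and, in the same vein, $\tilde{s}_n/n\to\infty$ whenever $s_n/n\to\infty$.

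Plugging these estimates into Proposition~\ref{prop:LorentzSA} finishes the argument: for $1<q<\infty$, $\sum_n w_n^{1-q'}/\tilde{s}_n\le(2r-1)^{q'-1}\sum_n n^{q'-1}/\tilde{s}_n^{q'}<\infty$ by the hypothesis, invoking part~\ref{prop:LorentzSA:NW}; for $q=1$, the above shows $\lim\tilde{s}_n/n=\infty$, invoking part~\ref{prop:LorentzSA:NWO}\ref{it:Lq:2}. In either case the unit vector system of $d_{1,q}(\ww)$ is strongly absolute, and Lemma~\ref{lem:SADom} transfers the property to $\XB$.
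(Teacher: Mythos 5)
Your argument is correct and follows essentially the same route as the paper: embed $\XX$ into a Lorentz space $d_{1,q}(\ww)$ via Theorem~\ref{thm:embLorentz}, verify strong absoluteness of the unit vector system of $d_{1,q}(\ww)$ through Proposition~\ref{prop:LorentzSA} (using the lower regularity property supplied by Lemma~\ref{lem:ConcaveLRP}), and transfer the property to $\XB$ by Lemma~\ref{lem:SADom}. The only divergence is the choice of weight for $1\le q<\infty$ --- the paper simply takes $w_n=s_n/n$ (its primitive being $\lesssim s_n$ by the LRP), whereas you take the weight whose primitive is the Ces\`aro mean $\tilde s_n$ of $(s_k)_{k\le n}$; since the LRP gives $\tilde s_n\approx s_n$ and hence $w_n=(s_n-\tilde s_n)/n\approx s_n/n$ (up to the harmless boundary terms your block count drops), the two weights generate the same Lorentz space and the proofs coincide in substance.
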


\begin{proof}
If $1\le q<\infty$, applying Lemma~\ref{lem:ConcaveLRP} gives a constant $C$ such that
\begin{equation*}
\sum_{n=1}^m \frac{s_n}{n} \le C s_m, \quad m\in\NN.
\end{equation*}
Set $\ww=s_n/n)_{n=1}^\infty$ if $1<q<\infty$, and let $\ww$ be the weight whose primitive weight is $(s_m)_{m=1}^\infty$ if $q\in\{1,\infty\}$.
By Proposition~\ref{prop:LorentzSA}, the unit vector system of $d_{1,q}(\ww)$ is strongly absolute. Then, the result follows from combining Theorem~\ref{thm:embLorentz} with Lemma~\ref{lem:SADom}.
\end{proof}

\begin{example}\label{ex:Lorentz} Let $\ww=(w_n)_{n=1}^\infty$ be a weight whose primitive weight $(s_n)_{n=1}^\infty$ is doubling.
\begin{enumerate}[label=(\roman*), leftmargin=*, widest=iii]
\item\label{ex:Lorentz:1} If $\sum_{n=1}^\infty 1/s_n <\infty$, the spaces
\begin{align*}
\ell_{p}(\ls_{1,\infty}(\ww))&=(\ls_{1,\infty}(\ww)\oplus\cdots \oplus\ls_{1,\infty}(\ww)\oplus\cdots)_{\ell_p},\\
\ls_{1,\infty}(\ww)(\ell_{p})&=(\ell_{p}\oplus\cdots \oplus\ell_{p}\oplus\cdots)_{\ls_{1,\infty}(\ww)}\end{align*}
have a (UTAP) unconditional basis for all $0<p<1$.
\item\label{ex:Lorentz:2} Let $1<q<\infty$ and denote by $q'$ its conjugate exponent. Suppose that $\sum_{n=1}^\infty w_n^{-q'+1} s_n^{-1} <\infty$. Then the spaces
\begin{align*}
\ell_{p}(d_{1,q}(\ww)&=(d_{1,q}(\ww)\oplus\cdots \oplus d_{1,q}(\ww)\oplus\cdots)_{\ell_p},\\
d_{1,q}(\ww)(\ell_{p})&=(\ell_{p}\oplus\cdots \oplus\ell_{p}\oplus\cdots)_{d_{1,q}(\ww)}
\end{align*}
have a (UTAP) unconditional basis for all $0<p<1$.

\item If $\inf_m s_n/n>0$, then the space
\[
\ell_{p}(d_{1,q}(\ww))=(d_{1,q}(\ww)\oplus\cdots \oplus d_{1,q}(\ww)\oplus\cdots)_{\ell_{p}}
\]
has a (UTAP) unconditional basis for all $0<p<1$ and $0<q<1$. Recall that $d_{1,q}(\ww)$ is, for a different weight $\ww'$, the classical space $d(q,\ww')$, considered in \cite{AlbiacLeranoz2008} (see \ref{eq:LorentzEmb}).

\item\label{ex:Lorentz:3} Let $0<q\le 1$ and suppose that $\lim_n s_n/n=\infty$. Then the spaces $\ell_{p}(d_{1,q}(\ww))$ and $ d_{1,q}(\ww)(\ell_{p})$ have
a (UTAP) unconditional basis for all $0<p\le 1$.

\end{enumerate}
\end{example}

\subsection{Orlicz sequence spaces}\label{ex:Orlicz}
A \emph{normalized Orlicz function} is a right-continuous increasing function $F\colon[0,\infty)\to[0,\infty)$ such $F(0)=0$ and $F(1)=1$. The topological vector space built from the modular
\[
(a_n)_{n=1}^\infty \mapsto \sum_{n=1}^\infty F(|a_n|)
\]
is the Orlicz sequence space usually denoted by $\ell_F$. The space $\ell_F$ is locally bounded if and only if
If there is $p>0$ such that
\begin{equation}\label{eq:OrliczConvex}
\sup_{0<x,t\le 1} \frac{F(tx)}{x^p F(t)}<\infty
\end{equation}
(see \cite{Kalton1977}*{Proposition 4.2}), in which case $\ell_F$ is complete, i.e., $\ell_F$ is a quasi-Banach space equipped with the quasi-norm
\[
\Vert (a_{n})_{n=1}^{\infty}\Vert_{F} = \inf\left\{\rho>0\colon\sum_{n=1}^{\infty} F(|a_{n}|/\rho)\le 1 \right\}.
\] Moreover, if \eqref{eq:OrliczConvex} holds for a given $p$, then the unit vector system induces a $p$-convex lattice structure on $\ell_F$.

Let $G$ be the ``dual'' function of $F$, defined by
$G(t)=t/F(t)$ for $0<t<\infty$.
A standard argument gives that \eqref{eq:OrliczConvex} holds for some $p$ if and only if $G$ is doubling near the origin, i.e., there is a constant $C$ such that
\[
G(t)\le C G(t/2), \quad 0<t\le 1.
\]
Summing up, if $G$ is doubling near the origin, then $\ell_F$ is an $L$-convex (symmetric) sequence space.
The sequence space $\ell_F$ is minimal if and only $F$ is doubling near the origin. Moreover $\ell_F$ is contained in $\ell_1$ if and only if $\inf_{0\le t \le 1} F(t)/t>0$.

Given a normalized Orlicz such $F$ we define its inverse by
\[
H(s)=\sup \{ t\in[0,\infty) \colon F(t) \le s\}, \quad 0\le s<\infty,
\]
If the dual function $G$ is doubling near the origin, then $\ell_F$ is a symmetric sequence space.

In order to apply Theorems~\ref{thm:UWCBSSums} and \ref{thm:l1sumK21} to Orlicz squence spaces it is convenient to have a criterion that guarantees that the unit vector system of $\ell_F$ is strongly absolute, which will imply that it is uniformly universal for well complemented block basic sequences. In some particular cases, the strong absoluteness of the unit vector system of $\ell_F$ can be derived from Proposition~\ref{cor:SADemConcave}. However, using specific techniques for this type of spaces allows to obtain better results. For instance, we will next show that the unit vector system of most Orlicz sequence spaces contained in $\ell_1$ is strongly absolute.

\begin{proposition}\label{prop:OrliczSA}
Let $F$ be a normalized Orlicz function and set $G(t)=t/F(t)$, $0<t<\infty$. Suppose that $G$ doubling near the origin, essentially increasing, and satisfies $\lim_{t\to 0^+}G(t)=0$. Then $F$ is doubling near the origin, the Orlicz sequence space $\ell_F$ is minimal, and the canonical basis is strongly absolute.
\end{proposition}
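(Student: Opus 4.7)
First, the doubling of $G$ near $0$ is precisely condition \eqref{eq:OrliczConvex}, so by the discussion in the excerpt $\ell_F$ is a $p$-convex quasi-Banach lattice for some $p > 0$. I would next derive that $F$ is doubling near the origin directly from the essentially increasing property of $G$: specializing the inequality $G(t) \le C_1 G(s)$ (valid for $t \le s$) to $s = 2t$ gives $t/F(t) \le 2 C_1 t/F(2t)$, hence $F(2t) \le 2 C_1 F(t)$. By the criterion ``$\ell_F$ minimal $\iff$ $F$ doubling near $0$'' recalled in the excerpt, $\ell_F$ is therefore minimal, and its canonical unit vector system is a normalized $1$-unconditional basis (normalized because $F(1) = 1$).

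The heart of the argument is strong absoluteness, for which I would exploit the trivial identity $t = F(t) G(t)$. By homogeneity of the quasi-norm, it suffices to treat $f = (a_n)_n$ with $\Vert f\Vert_F = 1$, so that $\sum_n F(|a_n|) \le 1$. Set $M := \sup_n |a_n|$. Using the essentially increasing property $G(|a_n|) \le C_1 G(M)$, we obtain
\[
\sum_n |a_n| \;=\; \sum_n F(|a_n|)\, G(|a_n|) \;\le\; C_1 G(M) \sum_n F(|a_n|) \;\le\; C_1 G(M).
\]
Now fix $R > 0$ and assume $\Vert f\Vert_F < R \sum_n |a_n|$, i.e.\ $\sum_n |a_n| > 1/R$. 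The previous bound forces $G(M) > 1/(R C_1)$. Since $\lim_{t \to 0^+} G(t) = 0$ combined with $G$ essentially increasing, there exists $\delta(R) > 0$ such that $G(t) \le 1/(R C_1)$ for every $0 < t \le \delta(R)$ (it suffices to pick $\delta(R)$ with $G(\delta(R)) \le 1/(R C_1^2)$ and invoke essentially increasing). Consequently $M > \delta(R)$, and since $F$ is strictly positive on $(0, \infty)$ (otherwise $G$ would be undefined there) and nondecreasing, $F(M) \ge F(\delta(R)) > 0$. Combining this with $G(M) = M/F(M)$ yields
\[
\sum_n |a_n| \;\le\; \frac{C_1 M}{F(M)} \;\le\; \frac{C_1}{F(\delta(R))} \sup_n |a_n|,
\]
so the canonical basis of $\ell_F$ is strongly absolute with function $\alpha(R) := C_1/F(\delta(R))$.

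The only delicate point is the construction of the cut-off $\delta(R)$, which is exactly where the assumption $\lim_{t \to 0^+} G(t) = 0$ is indispensable; without it the estimate $\sum_n |a_n| \le C_1 G(M)$ would afford no lower bound on $M$, and hence no way to convert control of the modular into control of $\sup_n |a_n|$. The doubling of $G$ serves only to place $\ell_F$ within the quasi-Banach framework, while the essentially increasing hypothesis plays a dual role, furnishing both the doubling of $F$ (hence minimality) and the key summation inequality above.
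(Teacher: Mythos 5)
Your proposal is correct and follows essentially the same route as the paper: both derive the doubling of $F$ from the essentially increasing property of $G$, and both prove strong absoluteness via the identity $|a| = v\,F(|a|/v)\,G(|a|/v)$, the bound $\sum_n |a_n| \le C\,\Vert f\Vert_F\, G\bigl(\Vert f\Vert_\infty/\Vert f\Vert_F\bigr)$, and a cut-off $\delta(R)$ supplied by $\lim_{t\to 0^+} G(t)=0$. The only cosmetic differences are that you normalize $\Vert f\Vert_F = 1$ and use the equivalent implication form of strong absoluteness stated after the definition, while the paper keeps the norm general and produces the max-form inequality directly.
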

\begin{proof}Let $C\in[1,\infty)$ be such that
$
G(s)\le CG(t)
$
for all $0<s\le t\le 1$. Since
\[
F(t)=\frac{t}{G(t)}\le \frac{Ct}{G(s)} =\frac{Ct}{s} F(s), \quad 0<s\le t\le 1,
\]
$F$ is doubling near the origin.

Fix $R<\infty$ and pick $\delta>0$ such that $G(t)\le 1/(RC)$ for every $0<t\le \delta$. Given $f=(a_n)_{n=1}^\infty\in\ell_F$, set $u=\Vert f \Vert_\infty$ and $v=\Vert f \Vert_{\ell_F}$. Then
\begin{align*}
w:=\sum_{n=1}^\infty |a_n|
&=\sum_{n=1}^\infty v F\left( \frac{|a_n|}{v} \right) G \left( \frac{|a_n|}{v} \right)\\
&\le Cv G\left( \frac{u}{v} \right) \sum_{n=1}^\infty F\left( \frac{|a_n|}{v} \right)\\
&\le Cv G\left( \frac{u}{v} \right).
\end{align*}
In the case when $u/v\le \delta$ we have $w\le v/R$. Otherwise,
\[
w\le Cv G\left( \frac{u}{v} \right) = \frac{Cu}{ F(u/v)}\le \frac{Cu}{ F(\delta)}.
\]
Hence, $\ell_F$ is strongly absolute with function $\alpha$ given by
\[
\alpha(R)=\frac{C}{F(\inf \{ t \colon RCG(t) <1 \})}, \quad 0<R<\infty.\qedhere
\]
\end{proof}

Let us mention that Kalton \cite{Kalton1977} (implicitly) proved that if $F$ and its dual function $G$ are doubling near the origin, $G$ is bounded near the origin, and
\begin{equation*}
\lim_{\varepsilon\to 0^+} \inf_{0<s<1}\frac{-1}{\log \varepsilon}\int_\varepsilon^1 \frac{F(sx)}{sx^2}\, dx=\infty,
\end{equation*}
then the unit vector system of the minimal sequence space $\ell_F$ has the peaking property. It is unclear whether these assumptions imply that $\ell_F$ has the uniform peaking property.

\begin{example}\label{ex:Orlicz1}
Let $F$ be a normalized Orlicz function and for $t>0$ let $G(t)=t/F(t)$. Assume that $F$ and $G$ are doubling near the origin, that $G$ is essentially increasing, and that $\lim_{t\to 0^+} G(t)=0$. Then the following spaces have a (UTAP) unconditional basis:
\begin{enumerate}[label=(\roman*), leftmargin=*, widest=iii]
\item $\ell_{1}(\ell_{F})=(\ell_{F}\oplus \ell_{F}\oplus\cdots \oplus \ell_{F}\oplus\cdots)_{1}$;
\item\label{ex:Orlicz:2} $(\bigoplus_{n=1}^\infty \ell_q^n)_{\ell_F}=( \ell_{q}^{1}\oplus \ell_{q}^{2}\oplus\cdots \oplus \ell_{q}^{n}\oplus\cdots)_{\ell_{F}}$ for all $0<q\le 1$;
\item\label{ex:Orlicz:3} $\ell_{F}(c_{0})=(c_{0}\oplus c_{0}\oplus\cdots \oplus c_{0}\oplus\cdots)_{\ell_{F}}$;
\item $\ell_{F}(\ls_{1,\infty}(\ww))=(\ls_{1,\infty}(\ww)\oplus \ls_{1,\infty}(\ww)\oplus\cdots \ls_{1,\infty}(\ww)\dots)_{\ell_{F}}$ and
\item[] $\ls_{1,\infty}(\ww)(\ell_{F})=(\ell_{F}\oplus \ell_{F}\oplus\cdots \ell_{F}\dots)_{\ls_{1,\infty}(\ww)}$, where $\ww$ is as in Example~\ref{ex:Lorentz}~\ref{ex:Lorentz:1}.
\item $\ell_{F}(d_{1,q}(\ww)=(d_{1,q}(\ww)\oplus d_{1,q}(\ww)\oplus\cdots d_{1,q}(\ww)\dots)_{\ell_{F}}$ and
\item[] $d_{1,q}(\ww)(\ell_{F})=(\ell_{F}\oplus \ell_{F}\oplus\cdots \ell_{F}\dots)_{d_{1,q}(\ww)}$, where $\ww$ and $q$ are as in Example~\ref{ex:Lorentz}~\ref{ex:Lorentz:2} and \ref{ex:Lorentz:3}.

\end{enumerate}
\end{example}

For instance, the uniqueness of unconditional basis in $\ell_{1}(\ell_{F})$ is an application of Theorem~\ref{thm:l1SAUTAP}. To see \ref{ex:Orlicz:2} when $0<q<1$, we just need to apply Theorem~\ref{WojtCriterion} since condition~\ref{it:SubSym} in Lemma~\ref{lem:squares} is fulfiled. Then by Proposition~\ref{NewStronglyAbsBases}, the canonical basis of $\ell_{F}(\ell_{q})$ is strongly absolute and equivalent to its square. To show the case when $q=1$ in part \ref{ex:Orlicz:2} and part \ref{ex:Orlicz:3} however, we need to appeal to Theorem~\ref{thm:UTAPLSB} and take into account that the unit vector basis of $\ell_{1}$ and $c_{0}$ is perfectly homogeneous, hence uniformly universal for well complemented block basic sequences with function $C\mapsto 1$. The verification of the corresponding hypotheses leading to the uniqueness property in the remaining cases is totally straightforward, and so we leave it for the reader.

\subsection{Bourgin-Nakano spaces} A \textit{Bourgin-Nakano index} is a family $(p_n)_{n \in \NN}$ in $(0,\infty)$ with $p=\inf_n p_n>0$. The \textit{Bourgin-Nakano space} $\ell(p_{n})$ is the quasi-Banach space built from the modular
\[
m_{(p_n)}\colon\FF^{\NN} \to[0,\infty), \quad (a_n)_{n \in \NN} \mapsto \sum_{n \in \NN} |a_n|^{p_n}.
\]
If we endow $\ell(p_{n})$ with the natural ordering, it becomes a $p$-convex quasi-Banach lattice. The separable part $h(p_{n})=[\ee_n \colon n \in \Nt]$ of $\ell(p_{n})$ is a minimal sequence space. We have $\ell(p_{n})=h(p_{n})$ if and only if $\sup_n p_n<\infty$.

The unit vector system $(\ee_{n})_{n=1}^{\infty}$ of $\ell(p_{n})$ is a $1$-unconditional basis which is universal for well complemented block basic sequences (\cite{AlbiacAnsorena2020b}*{Proposition 4.7}), and which is strongly absolute if and only if $q:=\limsup p_{n}< 1$. Indeed, this condition implies that the space embeds naturally into $\ell_{q}$ and so we can apply Lemma~\ref{lem:SADom}. Moreover, the Banach envelope of $\ell(p_{n})$ is anti-Euclidean if and only if $\limsup p_{n}\le 1$ (see \cite{AlbiacAnsorena2020b}*{Proposition 4.5 and Corollary 4.6}). Combining \cite{AlbiacAnsorena2020b}*{Lemma 4.3} with Lemma~\ref{lem:k2one} gives that the unit vector system of any Bourgin-Nakano space is uniformly universal for well-complemented block basic sequences with function $C\mapsto 4C^2$.

\begin{example} The following spaces have a (UTAP) unconditional basis:
\begin{enumerate}[label=(\roman*), leftmargin=*, widest=iii]
\item $\ell(p_{n})(\ell_{1})=\{(z_{n})_{n=1}^{\infty}\colon z_{n}\in \ell_{1} \;\text{and}\; \sum_{n=1}^{\infty}\Vert z_{n}\Vert_{1}^{p_{n}}<\infty\}$, where $\limsup p_{n}<1$.

\item\label{ex:Nakano:2} $\ell_{F}(\ell(p_{n}))=(\ell(p_{n})\oplus \ell(p_{n})\oplus\cdots \oplus \ell(p_{n})\oplus\cdots)_{\ell_{F}}$, where $F$ is an Orlicz function as in Example~\ref{ex:Orlicz1} and $\limsup p_{n}\le1$.
\item\label{ex:Nakano:3} $\ell_{1}(\ell(p_{n}))=(\ell(p_{n})\oplus \ell(p_{n})\oplus\cdots \oplus \ell(p_{n})\oplus\cdots)_{\ell_q}$, where $\lim p_{n}< 1$.
\end{enumerate}
\end{example}

Indeed, the uniqueness of unconditional basis in the first example follows from a direct application of Theorem~\ref{thm:UTAPLSB}, where we use that the basis of $\ell_{1}$ is equivalent to its square (condition \ref{UTAPLSB:SQ}~\ref{UTAPLSB:SQ:1}). In the second example we use also Theorem~\ref{thm:UTAPLSB}, but now we employ condition \ref{UTAPLSB:SQ}~\ref{UTAPLSB:SQ:2} since, while $\ell(p_{n})$ need not be lattice isomorphic to its square, $\ell_F$ is.
The last case is just a direct application of Theorem~\ref{thm:l1SAUTAP}, since the hypothesis ensures that the canonical basis of $\ell(p_{n})$ is strongly absolute. Note that in the cases \ref{ex:Nakano:2} and \ref{ex:Nakano:3}, the uniqueness of unconditional basis in the direct sum is obtained without knowing whether the space $\ell(p_{n}))$ has a unique unconditional basis or not!

\subsection{Hardy spaces} Because of their importance in Analysis, we single out as well some examples involving Hardy spaces. For the convenience of the reader we will next state a few known facts about the spaces $H_p(\TT^d)$ that we will need in order to apply Theorems~\ref{thm:UTAPLSB} and \ref{thm:l1SAUTAP}.

The first unconditional bases in $H_p(\TT)$ for $0<p<1$ were constructed in \cite{Woj1984}. Those bases allow a manageable expression for the norm in terms of the coefficients relative to the basis. Namely, if $\HB= (\xx_n)_{n=0}^\infty$ is such a normalized basis then
\begin{equation}\label{dyadic}
\left\|\sum_{n=0}^\infty a_n \, \xx_n\right\|_{H_p(\TT)}\approx \left( \int_0^1\left(\sum_{n=0}^\infty|a_n|^2 h_n^2\right)^{p/2}\right)^{1/p}, \ (a_n)_{n=1}^\infty\in c_{00},
\end{equation}
where $(h_n)_{n=0}^\infty$ is the classical Haar system on $[0,1]$ normalized with respect to the norm in $L_p([0,1])$. Those bases allow tensor constructions of unconditional bases in $H_p(\TT^d)$ for $d\in \NN$ which satisfy an equivalence analogous to \eqref{dyadic}. Using those tensored bases, Kalton et al.\ \cite{KLW1990} showed that the spaces $H_p(\TT^d) $ and $H_p(\TT^{d'})$ with $0<p<1$ and $d$, $d'\in \NN$, are isomorphic if and only if $d=d'$. Then it was proved in \cite{Woj1997} that all the spaces $H_p(\TT^d)$ for $0<p<1$ and $d\in \NN$ have a UTAP unconditional basis.

The canonical basis $\HB$ of the Hardy spaces $H_p(\TT^d)$, $0<p<1$, induces a $p$-convex lattice structure and satisfies the estimate
\[
m^{1/p} \approx \varphi_m^l[\HB, H_p(\TT^d)],\quad m\in\NN.
\]
Hence, Proposition~\ref{cor:SADemConcave} implies that $\HB$ is strongly absolute This way we can use Hardy spaces (or more generally subspaces of Hardy spaces generated by subbases of the Haar system) to build examples of spaces with a (UTAP) unconditional basis. Given a (finite or infinite) nonempty subset $\nn\subseteq\NN$, $H_p^\nn(\TT)$ denotes the subspace of $H_p(\TT)$ generated by Haar functions belonging to layers in $\nn$.

\begin{example} The following spaces have a (UTAP) unconditional basis:
\begin{enumerate}[label=(\roman*), leftmargin=*, widest=iii]
\item The space $H_{p}(\TT,d_{1,q}(\ww))=d_{1,q} \oplus d_{1,q}(\ww)\oplus\cdots\oplus d_{1,q}(\ww)\oplus\cdots)_{H_{p}}$, consisting of all sequences $(\zz_{n})_{n=1}^{\infty}$ such that $\zz_{n}\in d_{1,q}(\ww)$ for all $n\in \NN$ and
\[
\left( \int_0^1\left(\sum_{n=0}^\infty\Vert\zz_n\Vert^2 h_n^2(t)\right)^{p/2}dt\right)^{1/p}<\infty,
\] where $p<1$, $0<q< 1$ and the primitive weight $(s_m)_{m=1}^\infty$ of $\ww$ satisfies $\inf_m s_m/m>0$.

\item $\ell_{F}(H_{p}(\TT))=(H_{p}(\TT)\oplus H_{p}(\TT)\oplus\cdots\oplus H_{p}(\TT)\oplus\cdots)_{\ell_{F}}$, where $F$ is as in Proposition~\ref{prop:OrliczSA}.

\item $(\bigoplus_{k=1}^\infty H_p^{\nn_k}(\TT))_{\ell_1}=(H_{p}^{\nn_1}(\TT)\oplus H_{p}^{\nn_2}(\TT)\oplus\cdots\oplus H_{p}^{\nn_k}(\TT)\oplus\cdots)_{\ell_1}$, where $0<p<1$ and $(\nn_k)_{k=1}^\infty$ is an increasing sequence of subsets of $\NN$.

\end{enumerate}

Note that in the last example, since there are sets of layers $\nn\subseteq\NN$ for which $H_p^{\nn}(\TT)$ is not isomorphic to its square (see \cite{Woj1997}), we must use condition \ref{l1SAUTAP:SQ}~\ref{l1SAUTAP:SQ:2} in order for all the hypotheses of Theorem~\ref{thm:l1SAUTAP} to be satisfied. As a matter of fact, it is unknown whether $H_p^{\nn}(\TT)$ has a UTAP unconditional basis in the case when it is not isomorphic to its square (see Theorem~\ref{WojtCriterion}).

\end{example}

\subsection{Tsirelson's space}Casazza and Kalton established in \cite{CasKal1998} the uniqueness of unconditional basis up to permutation of Tsirelson's space $\Ts$ and its complemented subspaces with unconditional basis as a byproduct of their study of complemented basic sequences in lattice anti-Euclidean Banach spaces. Their result answered a question by Bourgain et al.\ (\cite{BCLT1985}), who had proved the uniqueness of unconditional basis up to permutation of the $2$-convexifyed Tsirelson's space $\Ts^{(2)}$. Unlike $\Ts^{(2)}$, which is ``highly" Euclidean, the space $\Ts$ is anti-Euclidean. To see the latter requires the notion of dominance, introduced in \cite{CasKal1998}.

Let $\XB=(\xx_n)_{n=1}^\infty$ be a (normalized) unconditional basis of a quasi-Banach space $\XX$. Given $f$, $g\in\XX$, we write $f\prec g$ if $m<n$ for all $m\in\supp(f)$ and $n\in\supp(g)$. Given $D\ge 1$, the basis $\XB$ is said to be \emph{left (resp.\ right) $D$-dominant} if whenever $(f_i)_{i=1}^n$ and $(g_i)_{i=1}^n$ are disjointly supported families with $f_i\prec g_i$ (resp.\ $g_i\prec f_i$) and $\Vert f_i\Vert \le \Vert g_i\Vert$ for all $i=1$, \dots, $n$, then $\Vert \sum_{i=1}^n f_i\Vert \le D \Vert \sum_{i=1}^n g_i\Vert$. As is customary, if the constant $D$ is irrelevant, we just drop it from the notation. If $\XX$ is a Banach space with a left (resp.\ right) dominant semi-normalized unconditional basis $\XB$ there is a unique $r=r(\XB)\in[1,\infty]$ such that $\ell_r$ is finitely block representable in $\XX$. In the case when $r(\XB)\in\{1,\infty\}$, $\XX$ is anti-Euclidean (see \cite{CasKal1998}*{Proposition 5.3}).

The canonical basis of the \textit{Tsirelson space} $\Ts$ is (normalized, $1$-unconditional and) right $16$-dominant (see \cite{CasKal1998}*{Proposition 5.12}) with $r(\Ts)=1$. In turn, by \cite{CasKal1998}*{Lemma 5.1}, the canonical basis of the original Tsirelson's space $\Ts^{\ast}$ is left-dominant.

Moreover, by \cite{CasKal1998}*{Proposition 5.5} and \cite{CasShu1989}*{page 14}, the canonical bases of $\Ts$ and $\Ts^{\ast}$ (as well as each of their subases) are equivalent to their square. In our language, \cite{CasKal1998}*{Theorem 5.6} says that every left (resp.\ right) dominant unconditional basis is universal for well complemented block basic sequences. Combining the arguments used in its proof with Lemma~\ref{lem:k2one} yields the following quantitative result.

\begin{theorem}
Let $\XB=(\xx_k)_{k=1}^\infty$ be a left (or right) $D$-dominant normalized $K$-unconditional basis of a quasi-Banach space $\XX$ with modulus of concavity at most $\kappa$. Then $\XB$ is uniformly universal for well complemented block basic sequences with function depending on $D$, $K$ and $\kappa$.
\end{theorem}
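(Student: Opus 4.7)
The plan is to rerun the qualitative proof of \cite{CasKal1998}*{Theorem 5.6} while recording every constant in terms of $D$, $K$, and $\kappa$, invoking Lemma~\ref{lem:k2one} at the concluding step. We may assume $\XB$ is left $D$-dominant; the right-dominant case follows from the analogous argument with ``left'' and ``right'' interchanged. Let $\YB=(\yy_m)_{m\in\Mt}$ be a normalized well $C$-complemented basic sequence with good $C$-projecting functionals $\YB^*=(\yy_m^*)_{m\in\Mt}$. After relabelling we may assume that the pairwise disjoint finite supports $A_m=\supp(\yy_m)$ satisfy $\max A_m<\min A_{m+1}$.

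The decisive step is the selection, for each $m$, of an index $\pi(m)\in A_m$ for which both $|\xx_{\pi(m)}^*(\yy_m)|$ and $|\yy_m^*(\xx_{\pi(m)})|$ are bounded below by a constant $\beta=\beta(D,K,\kappa,C)>0$. For the first bound, one splits $\yy_m=\yy_m^{(l)}+\xx_{\pi(m)}^*(\yy_m)\,\xx_{\pi(m)}+\yy_m^{(r)}$ with $\yy_m^{(l)}\prec\xx_{\pi(m)}\prec\yy_m^{(r)}$, and iterates left $D$-dominance on a dyadic subdivision of $A_m$, as in the qualitative proof of \cite{CasKal1998}*{Theorem 5.6}, to locate a coordinate of $A_m$ carrying a uniform fraction of the norm of $\yy_m$. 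The second bound is extracted by the symmetric argument applied to $\yy_m^*$, using the constraints $\supp(\yy_m^*)\subseteq A_m$ and $\Vert\yy_m^*\Vert\le C$ inherent in the good-projecting-functional structure, and one arranges that both bounds are simultaneously realised at a common index $\pi(m)$.

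With $\pi$ chosen, Lemma~\ref{lem:k2one} applied to $\uu_m=\xx_{\pi(m)}$, $\uu_m^*=\xx_{\pi(m)}^*$, $D_1=D_2=1/\beta$, and $D_3=1$ yields both the well $B$-complementedness of $(\xx_{\pi(m)})_{m\in\Mt}$ and its $E$-equivalence to $\YB$, with $B$ and $E$ polynomial in $\beta^{-1}$, $C$, $K$, and $\kappa$. Since $\pi(m)\in A_m=\supp(\yy_m)$ for every $m$, this is precisely the definition of uniform universality for well complemented block basic sequences with function $\eta(C)=E(D,K,\kappa,C)$. The main obstacle is the quantitative extraction of $\beta$: turning the qualitative Casazza-Kalton dyadic iteration into an explicit bound depending only on $D$, $K$, $\kappa$, and $C$ requires careful attention to how intermediate norm estimates propagate through the unconditionality constant $K$ and the modulus of concavity $\kappa$; once this is settled, the remainder is routine bookkeeping.
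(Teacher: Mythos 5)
Your plan hinges on the claim that for every normalized well $C$-complemented block basic sequence $\YB=(\yy_m)_{m\in\Mt}$ one can select $\pi(m)\in\supp(\yy_m)$ with both $|\xx_{\pi(m)}^*(\yy_m)|\ge\beta$ and $|\yy_m^*(\xx_{\pi(m)})|\ge\beta$ for some $\beta=\beta(D,K,\kappa,C)>0$. That is exactly the uniform peaking property, and it fails under the hypotheses of the theorem: left/right dominance is a much weaker assumption than strong absoluteness and is satisfied by locally convex spaces, where no ``large coefficient'' phenomenon is available. Concretely, $\ell_1$ is both left and right $1$-dominant; take $\yy_m=N_m^{-1}\sum_{n\in A_m}\ee_n$ with $(A_m)_{m}$ disjoint intervals, $|A_m|=N_m\to\infty$, and $\yy_m^*=\sum_{n\in A_m}\ee_n^*$. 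This is a normalized well $1$-complemented block basic sequence, yet $\sup_n|\yy_m^*(\ee_n)|\,|\ee_n^*(\yy_m)|=1/N_m\to 0$, so no uniform $\beta$ exists and no dyadic iteration can manufacture one (no single coordinate of $N_m^{-1}\sum_{n\in A_m}\ee_n$ carries a fixed fraction of its $\ell_1$-norm). The same happens for constant-coefficient blocks in Tsirelson's space $\Ts$. This is precisely why the paper handles dominant bases by a separate mechanism rather than via the peaking property, which it only establishes for strongly absolute bases (Remark~\ref{rmk:peakingSA}).

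The argument that works --- the one in \cite{CasKal1998}*{Theorem 5.6} and in the paper --- never looks for a peaked coordinate. One first shows, directly from (say) right $D$-dominance, that any semi-normalized disjointly supported sequence is dominated, with constants depending only on $D$, $K$, $\kappa$, by the unit vectors located at the \emph{minimal} elements of its supports, and in turn dominates the unit vectors located at the \emph{maximal} elements. Then for each $m$ one chooses a split point $k_m\in\supp(\yy_m)$ so that $\yy_m^*$ assigns mass at least $\lambda$ to the restriction of $\yy_m$ to $\supp(\yy_m)\cap[1,k_m]$ and at least $1-\lambda$ to the restriction to $\supp(\yy_m)\cap[k_m,\infty)$; Lemma~\ref{lem:k2one} is applied to compare $\YB$ with these two truncated sequences (not with single unit vectors), and the preliminary step sandwiches $(\xx_{k_m})_{m\in\Mt}$ between the left halves (which dominate it, $k_m$ being the maximum of their supports) and the right halves (which it dominates, $k_m$ being the minimum of theirs). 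Chaining these dominations yields the two-sided equivalence of $\YB$ with $(\xx_{k_m})_{m\in\Mt}$ with constants depending only on $D$, $K$, $\kappa$, $C$. Your ``decisive step'' must be replaced by this splitting argument; as written, the proposal cannot be repaired by more careful bookkeeping.
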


\begin{proof} Let us do the right-dominant case only as the left-dominant case is similar. For that, we first show that there are constants $D_1$ and $D_2$ depending only on $D$, $\kappa$ and $K$ such that any semi-normalized disjointly supported basic sequence $\UB=(\uu_m)_{m\in\Mt}$ $D_2$-dominates
$(a\,\xx_{k_m})_{m\in\Mt}$ and it is $D_1$-dominated by $(b\, \xx_{j_m})_{m\in\Mt}$, where
$a=\inf_m\Vert \uu_m\Vert$, $b=\sup_{m} \Vert \uu_m\Vert$, $
j_m=\min(\supp(\yy_m))$, and $k_m=\max(\supp(\yy_m))$.

Indeed, if $A_m=\supp(\uu_m)\setminus\{ j_m\}$, and $(a_m)_{m\in\Mt}\in c_{00}(\Mt)$,
\begin{align*}
\Big\Vert \sum_{m\in\Mt} & a_m\, \uu_m \Big\Vert
\le \kappa\left( \left\Vert \sum_{m\in\Mt} a_m \, S_{A_m}(\uu_m)\right\Vert+ \left\Vert \sum_{m\in\Mt} a_m \, \xx_{j_m}^*(\uu_m)\, \xx_{j_m}\right\Vert\right)\\
&\le \kappa D \left( \left\Vert \sum_{m\in\Mt} a_m \, \Vert S_{A_m}(\uu_m)\Vert \xx_{j_m}\right\Vert+ \left\Vert \sum_{m\in\Mt} a_m \, \xx_{j_m}^*(\uu_m)\, \xx_{j_m}\right\Vert\right) \\
&\le 2 \kappa K D b \left\Vert \sum_{m\in\Mt} a_m \, \xx_{j_m}\right\Vert.
\end{align*}
In turn, if $F_m=\supp(\uu_m)\setminus\{ k_m\}$, there are $(\lambda_m)_{m\in\Mt}$ and $(\gamma_m)_{m\in\Mt}$ such that $a=\kappa(\lambda_m+\gamma_m)$, $0\le \lambda_m\le \Vert S_{F_m}(\uu_m)\Vert$, and $0\le\gamma_m\le |\xx_{k_m}^*(\uu_m)|$ for all $m\in\Mt$. Hence,
\begin{align*}
\Big\Vert \sum_{m\in\Mt} a_m\, & \uu_m \Big\Vert\\
&\ge \frac{1}{K} \max\left\{ \left\Vert \sum_{m\in\Mt} a_m \, \lambda_m \frac{S_{F_m}(\uu_m)}{\Vert S_{F_m}(\uu_m)\Vert }\right\Vert,
\left\Vert \sum_{m\in\Mt} a_m \, \gamma_m\, \xx_{k_m}\right\Vert\right\}\\
&\ge \frac{1}{KD} \max\left\{ \left\Vert \sum_{m\in\Mt} a_m \, \lambda_m\, \xx_{k_m} \right\Vert,
\left\Vert \sum_{m\in\Mt} a_m \, \gamma_m\, \xx_{k_m}\right\Vert\right\}\\
&\ge \frac{a}{2\kappa^2 KD} \left\Vert \sum_{m\in\Mt} a_m\, \xx_{k_m} \right\Vert.
\end{align*}

Pick $0<\lambda<1$.
Let $\YB=(\yy_m)_{m\in\Mt}$ be a well complemented normalized basic sequence with good $C$-projecting functionals $\YB=(\yy^*_m)_{m\in\Mt}$. For each $m\in\Mt$ there is $k_m\in\supp(\yy_m)$ such that, if $A^{l}_m=\supp(\yy_m)\cap[1,k_m]$ and $A^{r}_m=\supp(\yy_m)\cap[k_m,\infty)$,
\[
|\yy_m^*(S_{A^l_m}(\yy_m))| \ge \lambda, \quad |\yy_m^*(S_{A^r_m}(\yy_m))|\ge 1-\lambda.
\]
By Lemma~\ref{lem:k2one}, $\YB^r:=(S_{A^r_m}(\yy_m))_{m\in\Mt}$ $(CK/(1-\lambda))$-dominates $\YB$, and $\YB$ $(CK)$-dominates $\YB^l:=(S_{A^l_m}(\yy_m))_{m\in\Mt}$. Moreover $\YB^l$ $(CK/\lambda)$-dominates $\YB$, whence
\[
\Vert S_{A^l_m}(\yy_m)\Vert\ge \frac{\lambda}{CK} , \quad m\in\Mt.
\]
Therefore, $\YB^l$ $(CKD_2/\lambda)$-dominates $(\xx_{k_m})_{m\in\Mt}$.
Since $\Vert S_{A^r_m}(\yy_m)\Vert\le K$ for all $m\in\Mt$, $(\xx_{k_m})_{m\in\Mt}$ $(KD_1)$-dominates $\YB^r$. Summing up, choosing $\lambda=1/(1+\kappa C K)$ we infer that $\XB$ is uniformly universal for well complemented basic sequences with function
\[
C\mapsto 2\kappa C K^3 D (1+\kappa C).\qedhere
\]
\end{proof}

Finally, since they are locally convex, both $\Ts$ and $\Ts^{\ast}$ are trivially $L$-convex lattices.

Combining the above background information with our main results we bring out a couple of examples:

\begin{example} For $0<p<1$ and $F$ an Orlicz function as in Proposition~\ref{prop:OrliczSA},
the spaces
\begin{enumerate}[label=(\roman*), leftmargin=*, widest=iii]
\item $\ell_{p}(\Ts)=(\Ts\oplus \Ts\oplus\cdots\oplus \Ts\oplus\cdots)_{\ell_p}$ and
\item $\ell_{F}(\Ts^{\ast})=(\Ts^{\ast}\oplus \Ts^{\ast}\oplus\cdots\oplus\Ts^{\ast}\oplus\cdots)_{\ell_{F}}$
\end{enumerate}
have a (UTAP) unconditional basis.
\end{example}

\subsection{Mixed-norm Lebesgue sequence spaces}

\noindent We close with applications to finite and infinite direct sums of mixed-norm Lebesgue sequence spaces.

\begin{example}\label{ex:mix:norm:1}
Suppose $(p_j)_{j=1}^n$ is sequence of indexes in $(0,1]$ with $p_j=1$ for at most one $j$. We consider the space
\[
\XX=\ell_{p_1}(\ell_{p_2}(\cdots\ell_{p_i}(\cdots(\ell_{p_n})))
\]
of recursive direct sums of a finite number of (possibly repeated) sequence spaces $\ell_{p_{j}}$. $\XX$ is a $p$-Banach space for $p=\min_{j} p_{j}$, and its canonical basis $\XB$ is unconditional and equivalent to its square. Moreover, $\XB$ induces on $\XX$ a $p$-convex lattice structure, and it dominates the unit vector system of $\ell_q$, where $q:=\max_j p_j$. Thus, in the case when $q<1$, Lemma~\ref{lem:SADom} implies that $\XB$ is also strongly absolute. Therefore, by Theorem~\ref{WojtCriterion}, $\XX$ has a (UTAP) unconditional basis. If we let one (and only one) of the indexes $p_{j}$ be 1, we need to distinguish two cases. Suppose first that $p_{1}=1$ and $0<p_{j}<1$ for $1<j\le n$. Then, as before, the canonical basis of $\ell_{p_2}(\ell_{p_3}(\cdots\ell_{p_i}(\cdots(\ell_{p_n})))$ is strongly absolute and so the uniqueness of unconditional basis of $\ell_{1}(\ell_{p_2}(\cdots\ell_{p_i}(\cdots(\ell_{p_n})))$ follows from Theorem~\ref{thm:l1SAUTAP}. Now, suppose that an index other than $p_{1}$, say $p_{3}$, is equal to $1$. On one hand, the canonical basis of $\ell_{p_4}(\ell_{p_5}(\cdots\ell_{p_i}(\cdots(\ell_{p_n})))$ is strongly absolute, so that by Theorem~\ref{thm:l1sumK21}, the canonical basis of $\ell_{p_{3}}(\ell_{p_4}(\ell_{p_5}(\cdots\ell_{p_i}(\cdots(\ell_{p_n})))$ is uniformly universal for well-complemented block basic sequences. On the other hand, since the canonical basis of $\ell_{p_{1}}(\ell_{p_{2}})$ is strongly absolute, we just need to apply Theorem~\ref{thm:UTAPLSB}.
\end{example}

\begin{remark}
In Example~\ref{ex:mix:norm:1}, the hypothesis that at most one $p_{j}=1$ is important for the validity of the uniqueness result. For instance, we don't known whether $\ell_1(\ell_p(\ell_1)))$ has a UTAP unconditional bases.
\end{remark}

\begin{example}\label{ex:mix:norm:2}
Let $(p_n)_{n=1}^\infty$ a one-to-one sequence in $(0,1]$ with $s:=\inf_n p_n>0$, and let $q\in (0,1]$. Consider now the space
\[
\XX=\left(\bigoplus_{n=1}^\infty \ell_{p_n}\right)_{\ell_q}=(\ell_{p_{1}}\oplus\ \ell_{p_{2}}\oplus\cdots\oplus\ell_{p_{n}}\oplus\cdots)_{\ell_q}.
\]
Note that since $\ell_p(\ell_p)=\ell_p$ isometrically for all $p>0$, there is no real restriction in assuming that the indices $p_j$ are not repeated.

The unit vector system $\EE_p$ of $\ell_p$ is $2^{1/p}$ equivalent to its square. Moreover, $\EE_p$ is perfectly homogeneous, thus uniformly universal for well complemented block basic sequences with function $C\mapsto 1$. Finally, $\EE_p$ is $1$-unconditional and, if we consider on $\ell_p$ the lattice structure induced by $\EE_p$, $M_r(\ell_p)=1$ for all $r\le p$. Hence, in the case when $q<1$, the uniqueness of unconditional basis of $\XX$ is an application of Theorem~\ref{thm:UTAPLSB}, where the hypothesis \ref{UTAPLSB:SQ} is fulfilled with condition \ref{UTAPLSB:SQ:1}.

 Suppose now that $q=1$ and $t:=\sup_n p_n<1$. The important detail here is that all the canonical bases of $\ell_{p_{n}}$ are strongly absolute with the same function $\alpha$. In fact, by \cite{AAW2021}*{Lemma 3.2}, we can choose
\[
\alpha(R)=\begin{cases} R^{t/(1-t)} & \text{ if }R\ge 1,\\ R^{s/(1-s)} & \text{ if }R\le 1.\end{cases}
\]
Hence, applying Theorem~\ref{thm:l1SAUTAP} gives that $\XX$ has a (UTAP) unconditional basis.
\end{example}

\begin{remark}
In Example~\ref{ex:mix:norm:2},
We do not known whether $\XX$ has a UTAP unconditional basis in the case when $q=1$ and $\lim_n p_n=1$.
\end{remark}



\begin{bibdiv}
\begin{biblist}

\bib{AlbiacAnsorena2020}{article}{
  author={Albiac, F.},
  author={Ansorena, J.~L.},
  title={Projections and unconditional bases in direct sums of $\ell_p$
 spaces, $0<p\le \infty$},
  date={2019},
  journal={arXiv e-prints},
  eprint={1909.06829},
  note={accepted for publication in Mathematische Nachrichten},
}

\bib{AlbiacAnsorena2020b}{article}{
  author={Albiac, F.},
  author={Ansorena, J.~L.},
  title={On the permutative equivalence of squares of unconditional
 bases},
  date={2020},
  journal={arXiv e-prints},
  eprint={2002.09010},
}

\bib{AABW2019}{article}{
  author={Albiac, F.},
  author={Ansorena, J.~L.},
  author={Bern\'a, P.~M.},
  author={Wojtaszczyk, P.},
  title={Greedy approximation for biorthogonal systems in quasi-banach
 spaces},
  date={2019},
  journal={arXiv},
  eprint={1903.11651},
}

\bib{AADK2016}{article}{
  author={Albiac, F.},
  author={Ansorena, J.~L.},
  author={Dilworth, S.~J.},
  author={Kutzarova, D.},
  title={Banach spaces with a unique greedy basis},
  date={2016},
  ISSN={0021-9045},
  journal={J. Approx. Theory},
  volume={210},
  pages={80\ndash 102},
   url={http://dx.doi.org/10.1016/j.jat.2016.06.005},
  review={\MR{3532713}},
}

\bib{AAW2021}{article}{
  author={Albiac, F.},
  author={Ansorena, J.~L.},
  author={Wojtaszczyk, P.},
  title={Quasi-greedy bases in $\ell_p$ ($0<p<1$) in are democratic},
  date={2020},
  ISSN={0022-1236},
  journal={J. Funct. Anal.},
}

\bib{AlbiacKalton2016}{book}{
  author={Albiac, F.},
  author={Kalton, N.~J.},
  title={Topics in {B}anach space theory},
  edition={Second},
  series={Graduate Texts in Mathematics},
 publisher={Springer, [Cham]},
  date={2016},
  volume={233},
  ISBN={978-3-319-31555-3; 978-3-319-31557-7},
   url={https://doi.org/10.1007/978-3-319-31557-7},
  note={With a foreword by Gilles Godefory},
  review={\MR{3526021}},
}

\bib{AKL2004}{article}{
  author={Albiac, F.},
  author={Kalton, N.~J.},
  author={Ler\'{a}noz, C.},
  title={Uniqueness of the unconditional basis of {$l_1(l_p)$} and
 {$l_p(l_1)$}, {$0<p<1$}},
  date={2004},
  ISSN={1385-1292},
  journal={Positivity},
  volume={8},
  number={4},
  pages={443\ndash 454},
   url={https://doi-org/10.1007/s11117-003-8542-z},
  review={\MR{2117671}},
}

\bib{AlbiacLeranoz2002}{article}{
  author={Albiac, F.},
  author={Ler\'{a}noz, C.},
  title={Uniqueness of unconditional basis of {$l_p(c_0)$} and
 {$l_p(l_2),\ 0<p<1$}},
  date={2002},
  ISSN={0039-3223},
  journal={Studia Math.},
  volume={150},
  number={1},
  pages={35\ndash 52},
   url={https://doi.org/10.4064/sm150-1-4},
  review={\MR{1893423}},
}

\bib{AlbiacLeranoz2008}{article}{
  author={Albiac, F.},
  author={Ler\'{a}noz, C.},
  title={Uniqueness of unconditional basis in {L}orentz sequence spaces},
  date={2008},
  ISSN={0002-9939},
  journal={Proc. Amer. Math. Soc.},
  volume={136},
  number={5},
  pages={1643\ndash 1647},
   url={https://doi-org/10.1090/S0002-9939-08-09222-8},
  review={\MR{2373593}},
}

\bib{AlbiacLeranozExpoMath}{article}{
  author={Albiac, F.},
  author={Ler\'{a}noz, C.},
  title={An alternative approach to the uniqueness of unconditional basis
 of {$\ell_p(c_0)$} for {$0<p<1$}},
  date={2010},
  ISSN={0723-0869},
  journal={Expo. Math.},
  volume={28},
  number={4},
  pages={379\ndash 384},
   url={https://doi.org/10.1016/j.exmath.2010.03.004},
  review={\MR{2734453}},
}

\bib{AlbiacLeranoz2011}{article}{
  author={Albiac, F.},
  author={Ler\'{a}noz, C.},
  title={Uniqueness of unconditional bases in nonlocally convex
 {$\ell_1$}-products},
  date={2011},
  ISSN={0022-247X},
  journal={J. Math. Anal. Appl.},
  volume={374},
  number={2},
  pages={394\ndash 401},
   url={https://doi-org/10.1016/j.jmaa.2010.09.048},
  review={\MR{2729229}},
}

\bib{Alt1975}{article}{
  author={Altshuler, Z.},
  title={Uniform convexity in {L}orentz sequence spaces},
  date={1975},
  ISSN={0021-2172},
  journal={Israel J. Math.},
  volume={20},
  number={3-4},
  pages={260\ndash 274},
   url={https://doi.org/10.1007/BF02760331},
  review={\MR{385517}},
}

\bib{BCLT1985}{article}{
  author={Bourgain, J.},
  author={Casazza, P.~G.},
  author={Lindenstrauss, J.},
  author={Tzafriri, L.},
  title={Banach spaces with a unique unconditional basis, up to
 permutation},
  date={1985},
  ISSN={0065-9266},
  journal={Mem. Amer. Math. Soc.},
  volume={54},
  number={322},
  pages={iv+111},
   url={https://doi-org/10.1090/memo/0322},
  review={\MR{782647}},
}

\bib{CRS2007}{article}{
  author={Carro, M.~J.},
  author={Raposo, J.~A.},
  author={Soria, J.},
  title={Recent developments in the theory of {L}orentz spaces and
 weighted inequalities},
  date={2007},
  ISSN={0065-9266},
  journal={Mem. Amer. Math. Soc.},
  volume={187},
  number={877},
  pages={xii+128},
   url={https://doi-org/10.1090/memo/0877},
  review={\MR{2308059}},
}

\bib{CasKal1998}{article}{
  author={Casazza, P.~G.},
  author={Kalton, N.~J.},
  title={Uniqueness of unconditional bases in {B}anach spaces},
  date={1998},
  ISSN={0021-2172},
  journal={Israel J. Math.},
  volume={103},
  pages={141\ndash 175},
   url={https://doi-org/10.1007/BF02762272},
  review={\MR{1613564}},
}

\bib{CasKal1999}{article}{
  author={Casazza, P.~G.},
  author={Kalton, N.~J.},
  title={Uniqueness of unconditional bases in {$c_0$}-products},
  date={1999},
  ISSN={0039-3223},
  journal={Studia Math.},
  volume={133},
  number={3},
  pages={275\ndash 294},
  review={\MR{1687211}},
}

\bib{CasShu1989}{book}{
  author={Casazza, P.~G.},
  author={Shura, T.~J.},
  title={Tsirel\cprime son's space},
  series={Lecture Notes in Mathematics},
 publisher={Springer-Verlag, Berlin},
  date={1989},
  volume={1363},
  ISBN={3-540-50678-0},
   url={https://doi.org/10.1007/BFb0085267},
  note={With an appendix by J. Baker, O. Slotterbeck and R. Aron},
  review={\MR{981801}},
}

\bib{DKKT2003}{article}{
  author={Dilworth, S.~J.},
  author={Kalton, N.~J.},
  author={Kutzarova, Denka},
  author={Temlyakov, V.~N.},
  title={The thresholding greedy algorithm, greedy bases, and duality},
  date={2003},
  ISSN={0176-4276},
  journal={Constr. Approx.},
  volume={19},
  number={4},
  pages={575\ndash 597},
   url={https://doi-org/10.1007/s00365-002-0525-y},
  review={\MR{1998906}},
}

\bib{Hall1948}{article}{
  author={Hall, M., Jr.},
  title={Distinct representatives of subsets},
  date={1948},
  ISSN={0002-9904},
  journal={Bull. Amer. Math. Soc.},
  volume={54},
  pages={922\ndash 926},
   url={https://doi.org/10.1090/S0002-9904-1948-09098-X},
  review={\MR{27033}},
}

\bib{Jameson1998}{article}{
  author={Jameson, G. J.~O.},
  title={The {$q$}-concavity constants of {L}orentz sequence spaces and
 related inequalities},
  date={1998},
  ISSN={0025-5874},
  journal={Math. Z.},
  volume={227},
  number={1},
  pages={129\ndash 142},
   url={https://doi-org.umbral.unirioja.es/10.1007/PL00004364},
  review={\MR{1605389}},
}

\bib{Kalton1977}{article}{
  author={Kalton, N.~J.},
  title={Orlicz sequence spaces without local convexity},
  date={1977},
  ISSN={0305-0041},
  journal={Math. Proc. Cambridge Philos. Soc.},
  volume={81},
  number={2},
  pages={253\ndash 277},
   url={https://doi-org/10.1017/S0305004100053342},
  review={\MR{433194}},
}

\bib{Kalton1984b}{article}{
  author={Kalton, N.~J.},
  title={Convexity conditions for nonlocally convex lattices},
  date={1984},
  ISSN={0017-0895},
  journal={Glasgow Math. J.},
  volume={25},
  number={2},
  pages={141\ndash 152},
   url={https://doi-org/10.1017/S0017089500005553},
  review={\MR{752808}},
}

\bib{K86}{article}{
  author={Kalton, N.~J.},
  title={Banach envelopes of nonlocally convex spaces},
  date={1986},
  ISSN={0008-414X},
  journal={Canad. J. Math.},
  volume={38},
  number={1},
  pages={65\ndash 86},
   url={https://doi.org/10.4153/CJM-1986-004-2},
  review={\MR{835036}},
}

\bib{KLW1990}{article}{
  author={Kalton, N.~J.},
  author={Ler\'{a}noz, C.},
  author={Wojtaszczyk, P.},
  title={Uniqueness of unconditional bases in quasi-{B}anach spaces with
 applications to {H}ardy spaces},
  date={1990},
  ISSN={0021-2172},
  journal={Israel J. Math.},
  volume={72},
  number={3},
  pages={299\ndash 311 (1991)},
   url={https://doi.org/10.1007/BF02773786},
  review={\MR{1120223}},
}

\bib{KPR1984}{book}{
  author={Kalton, N.~J.},
  author={Peck, N.~T.},
  author={Roberts, J.~W.},
  title={An {$F$}-space sampler},
  series={London Mathematical Society Lecture Note Series},
 publisher={Cambridge University Press, Cambridge},
  date={1984},
  volume={89},
  ISBN={0-521-27585-7},
   url={https://doi.org/10.1017/CBO9780511662447},
  review={\MR{808777}},
}

\bib{KaminskaParrish2008}{article}{
  author={Kami\'{n}ska, Anna},
  author={Parrish, Anca~M.},
  title={Convexity and concavity constants in {L}orentz and
 {M}arcinkiewicz spaces},
  date={2008},
  ISSN={0022-247X},
  journal={J. Math. Anal. Appl.},
  volume={343},
  number={1},
  pages={337\ndash 351},
   url={https://doi-org.umbral.unirioja.es/10.1016/j.jmaa.2008.01.034},
  review={\MR{2412132}},
}

\bib{Leranoz1992}{article}{
  author={Ler\'{a}noz, C.},
  title={Uniqueness of unconditional bases of {$c_0(l_p),\;0<p<1$}},
  date={1992},
  ISSN={0039-3223},
  journal={Studia Math.},
  volume={102},
  number={3},
  pages={193\ndash 207},
  review={\MR{1170550}},
}

\bib{LinPel1968}{article}{
  author={Lindenstrauss, J.},
  author={Pe{\l}czy\'{n}ski, A.},
  title={Absolutely summing operators in {$L_{p}$}-spaces and their
 applications},
  date={1968},
  ISSN={0039-3223},
  journal={Studia Math.},
  volume={29},
  pages={275\ndash 326},
   url={https://doi-org/10.4064/sm-29-3-275-326},
  review={\MR{0231188}},
}

\bib{LinZip1969}{article}{
  author={Lindenstrauss, J.},
  author={Zippin, M.},
  title={Banach spaces with a unique unconditional basis},
  date={1969},
  journal={J. Functional Analysis},
  volume={3},
  pages={115\ndash 125},
   url={https://doi-org/10.1016/0022-1236(69)90054-8},
  review={\MR{0236668}},
}

\bib{LindenstraussTzafriri1979}{book}{
  author={Lindenstrauss, Joram},
  author={Tzafriri, Lior},
  title={Classical {B}anach spaces. {II}},
  series={Ergebnisse der Mathematik und ihrer Grenzgebiete [Results in
 Mathematics and Related Areas]},
 publisher={Springer-Verlag, Berlin-New York},
  date={1979},
  volume={97},
  ISBN={3-540-08888-1},
  note={Function spaces},
  review={\MR{540367}},
}

\bib{Mitja2}{article}{
  author={Mitjagin, B.},
  title={Fr\'{e}chet spaces with a unique unconditional basis},
  date={1970},
  ISSN={0039-3223},
  journal={Studia Math.},
  volume={38},
  pages={23\ndash 34},
   url={https://doi.org/10.4064/sm-38-1-23-34},
  review={\MR{278033}},
}

\bib{Mitja1}{article}{
  author={Mitjagin, B.~S.},
  title={Equivalence of bases in {H}ilbert scales},
  date={1970/71},
  ISSN={0039-3223},
  journal={Studia Math.},
  volume={37},
  pages={111\ndash 137},
  review={\MR{322470}},
}

\bib{NawOrt1985}{article}{
  author={Nawrocki, M.},
  author={Orty\'{n}ski, A.},
  title={The {M}ackey topology and complemented subspaces of {L}orentz
 sequence spaces {$d(w,p)$} for {$0<p<1$}},
  date={1985},
  ISSN={0002-9947},
  journal={Trans. Amer. Math. Soc.},
  volume={287},
  number={2},
  pages={713\ndash 722},
   url={https://doi.org/10.2307/1999672},
  review={\MR{768736}},
}

\bib{Popa1981}{article}{
  author={Popa, Nicolae},
  title={Basic sequences and subspaces in {L}orentz sequence spaces
 without local convexity},
  date={1981},
  ISSN={0002-9947},
  journal={Trans. Amer. Math. Soc.},
  volume={263},
  number={2},
  pages={431\ndash 456},
   url={https://doi.org/10.2307/1998360},
  review={\MR{594418}},
}

\bib{Woj1984}{article}{
  author={Wojtaszczyk, P.},
  title={{$H_{p}$}-spaces, {$p\leq 1$}, and spline systems},
  date={1984},
  ISSN={0039-3223},
  journal={Studia Math.},
  volume={77},
  number={3},
  pages={289\ndash 320},
   url={https://doi-org/10.4064/sm-77-3-289-320},
  review={\MR{745285}},
}

\bib{Woj1997}{article}{
  author={Wojtaszczyk, P.},
  title={Uniqueness of unconditional bases in quasi-{B}anach spaces with
 applications to {H}ardy spaces. {II}},
  date={1997},
  ISSN={0021-2172},
  journal={Israel J. Math.},
  volume={97},
  pages={253\ndash 280},
   url={https://doi-org/10.1007/BF02774040},
  review={\MR{1441252}},
}

\bib{Wojtowicz1988}{article}{
  author={W\'{o}jtowicz, M.},
  title={On the permutative equivalence of unconditional bases in
 {$F$}-spaces},
  date={1988},
  ISSN={0208-6573},
  journal={Funct. Approx. Comment. Math.},
  volume={16},
  pages={51\ndash 54},
  review={\MR{965366}},
}

\end{biblist}
\end{bibdiv}

\end{document}